\theoremstyle{plain}
\newtheorem{theorem}{Theorem}[section]
\newtheorem{corollary}[theorem]{Corollary}
\newtheorem{lemma}[theorem]{Lemma}
\newtheorem{proposition}[theorem]{Proposition}
\theoremstyle{definition}
\newtheorem{definition}[theorem]{Definition}
\newtheorem{example}[theorem]{Example}
\newtheorem{remark}[theorem]{Remark}
\DeclareMathOperator{\Ann}{Ann}
\DeclareMathOperator{\End}{End}
\DeclareMathOperator{\Ext}{Ext}
\DeclareMathOperator{\h}{H}
\let\hom\relax 
\DeclareMathOperator{\hom}{Hom}
\DeclareMathOperator{\im}{im}
\DeclareMathOperator{\irred}{Irred}
\DeclareMathOperator{\iso}{\stackrel{\cong}{\longrightarrow}}
\DeclareMathOperator{\maxspec}{MaxSpec}
\DeclareMathOperator{\rad}{rad}
\DeclareMathOperator{\Supp}{Supp \,}
\renewcommand{\atop}[2]{\genfrac{}{}{0pt}{}{#1}{#2}}
\newcommand{\bb}[1]{\text{$\mathbb{#1}$}}
\newcommand{\bightimes}{\widehat{\bigotimes}}
\newcommand{\C}{\text{$\Bbbk$}}
\newcommand{\cal}[1]{\text{$\mathcal{#1}$}}
\newcommand{\cspan}{\textup{span}_\C}
\newcommand{\ev}[1]{\textup{ev}^*_{#1}}
\newcommand{\g}{\text{$\mathfrak{g}$}}
\newcommand{\ga}{\text{$\mathfrak{g} \otimes A$}}
\newcommand{\Gev}[1]{\textup{ev}^{\Gamma \, *}_{#1}}
\newcommand{\Gga}{\text{$(\mathfrak{g} \otimes A)^\Gamma$}}
\newcommand{\G}{\mathfrak{s}}
\newcommand{\htimes}[1][{ }]{\widehat{\otimes_{#1}}}
\newcommand{\I}{\mathfrak{i}}
\newcommand{\lie}[1]{\text{$\mathfrak{#1}$}}
\newcommand{\pra}[1][{ }]{\xrightarrow{#1}}
\newcommand{\sfm}{{\text{$\mathsf{m}$}}}
\newcommand{\sfM}{{\text{$\mathsf{M}$}}}
\numberwithin{equation}{section}
  \newcommand{\details}[1]{{\color{OliveGreen}\noindent\textbf{Details:}{#1}}}
                  \newcommand{\details}[1]{}
\begin{document}

\title{Irreducible modules for equivariant map superalgebras and their extensions}

\author{Lucas Calixto}
\address{Departamento de Matem\'atica\\
		   Instituto de Ci\^encias Exatas\\
		   UFMG\\
		   Belo Horizonte, Minas Gerais, Brazil, 30.123-970}
\email{lhcalixto@mat.ufmg.br}

\author{Tiago Macedo}
\address{Department of Mathematics and Statistics\\
         University of Ottawa\\
         Ottawa, ON K1N 6N5\\
         \  and \ 
         Universidade Federal de S\~ao Paulo\\
         S\~ao Jos\'e dos Campos, S\~ao Paulo, Brazil, 12.247-014}
\email{tmacedo@unifesp.br}

\begin{abstract}
Let $\Gamma$ be a group acting on a scheme $X$ and on a Lie superalgebra $\g$, both defined over an algebraically closed field of characteristic zero $\C$.  The corresponding equivariant map superalgebra $M(\g, X)^\Gamma$ is the Lie superalgebra of equivariant regular maps from $X$ to $\g$.  In this paper we complete the classification of finite-dimensional irreducible $M(\g, X)^\Gamma$-modules when $\g$ is a finite-dimensional simple Lie superalgebra, $X$ is of finite type and $\Gamma$ is a finite abelian group acting freely on the rational points of $X$, by classifying these $M(\g,X)^\Gamma$-modules in the case where $\lie g$ is a periplectic Lie superalgebra.  We also describe extensions between irreducible modules in terms of homomorphisms and extensions between modules for certain finite-dimensional Lie superalgebras.
\end{abstract}

\maketitle
\thispagestyle{empty}

\setcounter{tocdepth}{1}
\tableofcontents

\section{Introduction}

Lie superalgebras have a wide range of applications in many areas of physics and mathematics, such as supersymmetry, string theory, conformal field theory, and number theory.  In the study of symmetry, for instance, while Lie algebras describe bosonic degrees of freedom, Lie superalgebras also allow fermionic degrees of freedom \cite{Var04}.  In number theory, affine Kac-Moody superalgebras and their representations can be used to study problems related to sums of squares and sums of triangular numbers \cite{KW94}.  For more examples, see, for instance, \cite{FL85, Ser85, GLS01, FK02}.

It is usually the case that the representation theory of Lie superalgebras is more complicated than that of their Lie algebra counterparts.  For instance, the category of finite-dimensional modules for a finite-dimensional simple Lie algebra over an algebraically closed field of characteristic zero is always semisimple, while the category of finite-dimensional modules for a finite-dimensional simple Lie \textit{super}algebra is not necessarily so.  It is thus important to describe extensions between their irreducible modules.  Despite being a subject of intense study since the birth of supersymmetry, these extensions are not known in general, and, in contrast to extensions for Lie algebras, their study is often done case by case.  See, for instance, \cite{FL84, fuks86, Pol88, SZ98, SZ99, Gru00, Gru03, BKN10, Bag12} for some of these results.

The main goal of the current paper is to develop the representation theory of certain Lie superalgebras known as equivariant map superalgebras.  Equivariant map superalgebras generalize, on the one hand, simple Lie superalgebras, and on the other hand, current and loop Lie algebras.  They are constructed in the following way.  Consider a scheme $X$, a Lie superalgebra $\g$, both defined over an algebraically closed field of characteristic zero $\C$, and a group $\Gamma$ that acts on $X$ and $\g$ by automorphisms.  The corresponding equivariant map superalgebra $M(X, \g)^\Gamma$ is the Lie superalgebra of $\Gamma$-equivariant regular maps from $X$ to $\g$.  If one denotes by $A$ the coordinate ring of $X$, then $M(X, \lie g)^\Gamma$ can be identified with the Lie subsuperalgebra $(\g \otimes_\C A)^\Gamma$ of $\g \otimes_\C A$ that consists of its $\Gamma$-fixed points under the diagonal action.

In the particular case where \lie g is a Lie algebra, $M(X, \lie g)^\Gamma$ is called equivariant map algebra.  Representations of these Lie algebras have been a subject of intense research for the last thirty years (see, for instance, the survey \cite{NS13}).  One reason is that representations of $M(\C^\times, \lie g)^\Gamma$, known as twisted loop algebras, and $M(\C, \lie g)^\Gamma$, known as twisted current algebras, are closely related to those of affine Kac-Moody Lie algebras.  In fact, when $\g$ is a finite-dimensional simple Lie algebra and $\Gamma$ is the group of automorphisms of the Dynkin diagram of $\g$, the twisted current algebra is a parabolic subalgebra of the affine Kac-Moody Lie algebra associated to $\lie g$ and $\Gamma$, and the twisted loop algebra is its centerless derived subalgebra (see \cite[Section 13.1]{kumar02}).

Finite-dimensional irreducible representations of equivariant map algebras were classified by Neher, Savage and Senesi \cite{NSS12} in the case where $\g$ is a finite-dimensional Lie algebra and $\Gamma$ is a finite group.  Finite-dimensional simple Lie superalgebras over an algebraically closed field of characteristic zero and finite-dimensional irreducible representations of the so-called basic classical Lie superalgebras were classified by Kac in \cite{kac77} and \cite{kac78}.  In \cite{savage14}, Savage classified irreducible finite-dimensional representations of equivariant map superalgebras in the case where $\lie g$ is a basic classical Lie superalgebra (or $\lie{sl}(n,n)$, $n\geq 1$, if $\Gamma$ is trivial), $X$ has a finitely-generated coordinate ring and $\Gamma$ is a finite abelian group acting freely on the rational points of $X$.  Moving beyond basic classical Lie superalgebras, the first author, Moura and Savage classified finite-dimensional irreducible representations of equivariant map queer Lie superalgebras in \cite{CMS15}.  While in the basic classical setting those irreducible representations were isomorphic to tensor products of generalized evaluation representations, in the queer case they are irreducible products of evaluation representations (see Section \ref{ss:map.superalgebras} for details).  In \cite{Bag15}, Bagci extended this classification to equivariant map superalgebras where $\lie g$ is of Cartan type.

In the current paper, we complete the classification of finite-dimensional irreducible representations of equivariant map superalgebras by describing these modules for $M(\g, X)^\Gamma$ when $\lie g={\lie p}(n)$, a periplectic Lie superalgebra, $X$ has a finitely-generated coordinate ring, $\Gamma$ is a finite abelian group acting on $\g$ and $X$, and such that the induced action of $\Gamma$ on the rational points of $X$ is free.  In Theorem~\ref{thm:main.p(n)} we prove that, similarly to the case where $\g$ is basic classical of type II, all irreducible finite-dimensional $M(\g, X)^\Gamma$-modules are evaluation modules. The technique used to prove this result is similar to the one used in \cite{savage14}.  Behind this technique are the facts that one can choose a Cartan subalgebra of ${\lie p}(n)$ that is purely even and abelian, that the root space decomposition of $\lie p(n)$ with respect to such a subalgebra is relatively similar to that of the basic case, and that ${\lie p}(n)_{\bar 0}$ is a semisimple Lie algebra.

Equipped with a complete classification of irreducible finite-dimensional $M(\g, X)^\Gamma$-modules, one can inquire about their extensions.  This is the second problem that we address in the current paper.  In Theorem~\ref{thm:main} we describe extensions between finite-dimensional irreducible $\Gga$-modules in terms of homomorphisms and extensions between finite-dimensional indecomposable (irreducible, in some cases) modules for a finite-dimensional Lie superalgebra of the form $\ga/\sfm^n$, where $\sfm$ is a maximal ideal of $A$ and $n$ is a positive integer.  The technique used here is similar to the one used in \cite{NS13} for the non-super case.  The main difference in this super setting is that one needs to describe the kernel of certain transgression maps, which we do in Proposition~\ref{prop:ker.transgression}.

\subsection*{Organization of the paper}

In Section~\ref{section:notation} we fix the notation and state some results that will be used throughout the paper.  In Section~\ref{sec:periplectic} we describe the structure of periplectic Lie superalgebras, construct and classify the finite-dimensional irreducible $M(\g, X)^\Gamma$-modules when $\g$ is of periplectic type, $X$ has a finitely-generated coordinate ring, $\Gamma$ is a finite abelian group acting on $\g$ and $X$, and such that the action of $\Gamma$ on the rational points of $X$ is free (see Section~\ref{ss:class.pn} for the main result of this section).  In Section~\ref{s:transgression} we review the construction of Lyndon-Hochschild-Serre spectral sequences in order to describe the kernel of certain transgression maps that are relevant to the computation of extensions between finite-dimensional irreducible $M(\g, X)^\Gamma$-modules (see Proposition~\ref{prop:ker.transgression}).  Using results of previous sections, in Section~\ref{Exts} we develop a general technique to describe $p$-extensions between finite-dimensional irreducible $M(\g, X)^\Gamma$-modules, and describe $1$-extensions between these modules in terms of homomorphisms and extensions between finite-dimensional indecomposable (irreducible, in some cases) modules for a finite-dimensional Lie superalgebra of the form $\ga/\sfm^n$, where $\sfm$ is a maximal ideal of $A$ and $n$ is a positive integer (see Theorem~\ref{thm:main} for the main result of this section).  We finish the paper with some examples and applications.

\subsection*{Acknowledgments}

The authors would like to thank E. Neher and A. Savage for helpful discussions and for their comments on earlier versions of this paper.   The authors would also like to thank an anonymous referee for their thorough reading and suggestions.  The first author was supported by FAPESP grant 2013/08430-4 and PRPq grant ADRC-05/2016.  The second author was supported by CNPq grant 232462/2014-3.

\subsection*{Note on the arXiv version}

For the interested reader, the tex file of the arXiv version of this paper includes hidden details of some computations, arguments and proofs that are omitted in the pdf file.  These details can be displayed by switching the {\small\texttt{details}} toggle to true in the tex file and recompiling it.

\section{Notation and Preliminaries} \label{section:notation}

\subsection{Notation}

Let $\C$ denote an algebraically closed field of characteristic zero, $\bb Z_2$ denote the finite field with two elements $\{\bar0, \bar1\}$, and $\bb Z_{>0}$ denote the set of positive integers.  All vector spaces, algebras, and tensor products will be considered over the field $\C$ (unless otherwise specified).  A vector space $V$ is said to be a super space if it is $\bb Z_2$-graded; that is, there exist subspaces $V_{\bar0}, V_{\bar1} \subseteq V$ such that $V = V_{\bar0} \oplus V_{\bar1}$.  We denote by $| \cdot |$ the degree of a homogeneous element in a super space $V$; that is, $|v| = z$ for all $v \in V_z$ and $z \in \bb Z_2$.

\subsection{Finite-dimensional Lie superalgebras} \label{ss:lie.sup}

In this subsection we follow \cite{Mus12, gav14}.  

A Lie superalgebra is a  $\bb Z_2$-graded vector space $\lie g = \lie g_{\bar0} \oplus \lie g_{\bar1}$ with a $\bb Z_2$-graded linear transformation $[\cdot , \cdot] \colon \g \otimes \g \to \g$ that satisfies $\bb Z_2$-graded versions of anticommutativity and Jacobi identity.

For any Lie superalgebra $\g = \lie g_{\bar0} \oplus \lie g_{\bar1}$, the subspace $\g_{\bar0}$ inherits the structure of a Lie algebra and the subspace $\g_{\bar1}$ inherits the structure of a $\g_{\bar 0}$-module.  A finite-dimensional simple Lie superalgebra $\lie g$ is said to be classical if the $\lie g_{\bar0}$-module $\lie g_{\bar1}$ is completely reducible. Otherwise it is said to be of Cartan type.  When $\g$ is a classical Lie superalgebra, the $\lie g_{\bar0}$-module $\lie g_{\bar1}$ is either irreducible or a direct sum of two irreducible modules. In the first case, $\lie g$ is said to be of type II, and in the second case, $\lie g$ is said to be of type I. If a classical Lie superalgebra admits an even nondegenerate invariant bilinear form, then it is said to be basic; otherwise, it is said to be strange. Table \ref{table} summarizes the classification of finite-dimensional simple Lie superalgebras with nonzero odd parts proved by V.~Kac in \cite[Theorem~5]{kac77}.

\begin{table}
\begin{tabular}{l  l}
\toprule
Lie superalgebra & Classification \\
\midrule
$A(m,n)$, \ $m > n \ge 0$ & Basic, type I \\
$A(n,n)$, \ $n \ge 1$ & Basic, type I \\
$B(m,n)$, \ $m \ge 0$, $n \ge 1$ & Basic, type II \\
$C(n+1)$, \ $n \ge 1$ & Basic, type I \\
$D(m,n)$, \ $m \ge 2$, $n \ge 1$ & Basic, type II \\
$D(2,1;\alpha)$, \ $\alpha \ne 0,-1$ & Basic, type II \\
$F(4)$ & Basic, type II \\
$G(3)$ & Basic, type II \\
$\lie p(n)$, \ $n \ge 2$ & Strange, type I \\
$\lie q(n)$, \ $n \ge 2$ & Strange, type II \\
$H(n)$, \ $n \ge 4$ & Cartan type \\
$S(n)$, \ $n \ge 3$ & Cartan type \\
$\tilde S(n)$, \ $n = 2m, m \ge 2$ & Cartan type \\
$W(n)$, \ $n \ge 2$ &  Cartan type \\
\bottomrule
\end{tabular}
\bigskip
\caption{Classification of finite-dimensional simple Lie superalgebras with nonzero odd parts.} \label{table}
\end{table}

When $\lie g$ is a classical Lie superalgebra, a Cartan subalgebra of $\lie g$ is defined to be a Cartan subalgebra of the Lie algebra $\lie g_{\bar 0}$. When $\g$ is a Lie superalgebra of Cartan type, it admits a $\bb Z$-grading $\g = \bigoplus_{-1 \le i} \g_i$ that is compatible with the $\bb Z_2$-grading; that is, $\g_{\bar 0} = \bigoplus_{i \in 2 \bb Z}\g_i$  and $\g_{\bar 1}=\bigoplus_{i \in 2\bb Z}\g_{i+1}$, and such that $\g_0$ is a reductive Lie algebra.  In this case, a Cartan subalgebra of $\lie g$ is defined to be a Cartan subalgebra of the Lie algebra $\lie g_0$.  (It is worth noting that when $\g$ is of type $\tilde S$, this is only a $\bb Z$-grading as a vector space, not as Lie superalgebras.)

Let $\g$ be a finite-dimensional simple Lie superalgebra, and let $\lie h$ be a Cartan subalgebra of $\lie g$.  The action of $\lie h$ on $\g$ is diagonalizable and we have a root space decomposition
\[
\g=\lie h\oplus \bigoplus_{\alpha\in \lie h^* \setminus \{0\}}\g_{\alpha},
\quad \textup{where} \quad
\g_\alpha = \{ x \in \g \mid [h,x] = \alpha(h) x \textup{ for all } h \in \lie h\}.
\]
Let $\Delta = \{ \alpha \in \lie h^* \setminus \{ 0 \} \mid \lie g_\alpha \neq 0 \}$ denote the set of roots of \lie g and $Q$ denote the subgroup of $\lie h^*$ generated by $\Delta$.

Let \lie g be a finite-dimensional simple Lie superalgebra. Then the set of isomorphism classes of finite-dimensional irreducible \lie g-modules is in bijection with a subset $\Lambda^+$ of $\lie h^*$ (see \cite[Theorem~8]{kac77}). We denote by $\Lambda$ the subgroup of $\lie h^*$ generated by $\Lambda^+$, and by $V(\lambda)$ the irreducible $\lie g$-module corresponding to $\lambda \in \Lambda^+$, namely the unique finite-dimensional irreducible $\g$-module of highest weight $\lambda$.  Let $\lambda^* \in \Lambda^+$ denote the highest weight of the dual \lie g-module $V(\lambda)^*$.

For a Lie superalgebra $\g$ and a $\g$-module $V$, consider the space of linear endomorphisms of the vector space $V$, $\End(V)$, and define
\begin{gather*}
\End_{\lie g}(V)
= \{\varphi\in \End (V)\mid \varphi(x v)=x\varphi(v), \text{ for all }x\in \g,\ v\in V\}, \\
\End(V)_z
= \{ \phi \in \End (V) \mid \phi (v) \in V_{z+z'} \textup{ for all } v \in V_{z'}, z' \in \bb Z_2 \}, \\
\End_{\lie g}(V)_{\bar 0}=\End_{\lie g} (V)\cap \End (V)_{\bar 0} \text{ and }  \End_{\lie g}(V)_{\bar 1}=\End_{\lie g} (V)\cap \End (V)_{\bar 1}.
\end{gather*}
Let $\lie g^1$ and $\lie g^2$ be two finite-dimensional Lie superalgebras, and $V_1$ and $V_2$ be irreducible finite-dimensional modules for $\g^1$ and $\g^2$ respectively.  The ($\lie g^1 \oplus \lie g^2$)-module $V_1 \otimes V_2$ is reducible only if $\End_{\lie g^1} (V_1)_{\bar 1} \neq 0$ and $\End_{\lie g^2} (V_2)_{\bar 1} \neq 0$ (see \cite[Proposition~8.4]{Che95}).  In this case, by Schur's Lemma for Lie superalgebras, $\End_{\lie g^i} (V_i)_{\bar 1} = \C \varphi_i$ for some $\varphi_i^2 = 1$, $i \in \{1,2\}$, and
\[
\widehat{V}
= \left\{ v \in V_1 \otimes V_2 \mid \left( \sqrt{-1} \varphi_1 \otimes \varphi_2 \right) v = v \right\}
\]
is an irreducible ($\lie g^1\oplus\lie g^2$)-submodule satisfying $V_1 \otimes V_2 \cong \widehat{V}\oplus \widehat{V}$ (see \cite[p.~27]{Che95}).  Define the irreducible product $V_1 \htimes V_2$ to be
\[
V_1 \htimes V_2 =
\begin{cases}
V_1\otimes V_2,
& \textup{if $V_1 \otimes V_2$ is irreducible}, \\
\widehat V,
& \text{otherwise}.
\end{cases}
\]
If $\lie g^1$ and $\lie g^2$ are finite-dimensional simple Lie superalgebras not of type $\lie q$, then the irreducible product is always equal to the tensor product.

Given $\ell>1$, finite-dimensional Lie superalgebras $\lie g^1, \dotsc, \lie g^\ell$, and for each $i \in \{1, \dotsc, \ell\}$, an irreducible finite-dimensional $\lie g^i$-module $V_i$, define the $(\lie g^1 \oplus \dotsb \oplus \lie g^\ell)$-module $V_1 \htimes \dotsb \htimes V_\ell$ to be
\[
V_1 \htimes \dotsb \htimes V_\ell
= (V_1 \htimes \dotsb \htimes V_{\ell-1}) \htimes V_\ell.
\]
Up to isomorphism, $\htimes$ is associative and, when $\lie g^1 = \dots = \lie g^\ell$, $\htimes$ is also commutative (see \cite[Lemma~6.2]{CMS15}).  Also, define
\begin{equation} \label{eq:kappa.mods}
\kappa(V_1, \ldots, V_{\ell})
= \sum_{i=2}^{\ell} \dim \End_{\left( \g^1 \oplus \dotsb \oplus \g^{i-1} \right)} \left( V_1 \htimes \dotsb \htimes V_{i-1} \right)_{\bar 1} \dim \End_{\lie g^i} (V_i)_{\bar1}.
\end{equation}
One can prove by induction that $\kappa (V_1, \dotsc, V_\ell) \le \ell-1$ and that
\[
V_1 \otimes \dotsb \otimes V_\ell 
\cong (V_1 \htimes \dotsb \htimes V_\ell)^{\oplus 2^{\kappa (V_1, \dotsc, V_\ell)}}.
\]

\subsection{Equivariant map superalgebras} \label{ss:map.superalgebras}

In this subsection, we review results proved in \cite{savage14, Bag15, CMS15}. 

Let $\lie g$ be a finite-dimensional simple Lie superalgebra and $A$ be an associative, commutative, finitely-generated $\C$-algebra with unit.  The map superalgebra $\ga$ is the Lie superalgebra with underlying vector space $\lie g \otimes_\C A$, with $\bb Z_2$-grading given by $(\lie g \otimes A)_z = \lie g_z \otimes A$, $z \in \bb Z_2$, and with Lie superbracket extending bilinearly
\[
[x \otimes a, y \otimes b] = [x,y] \otimes ab,
\quad \textup{for all } x,y \in \g \textup{ and } a,b \in A.
\]
Let $\Gamma$ be a group acting on $\g$ and $A$ by automorphisms.  One can induce an action of $\Gamma$ on $\ga$ by extending linearly
\[
\gamma (x \otimes a) = \gamma(x) \otimes \gamma (a),
\quad \textup{for all } \gamma \in \Gamma, x \in \g \textup{ and } a \in A.
\]
The equivariant map superalgebra $\Gga$ is the Lie subsuperalgebra of $\ga$ consisting of its $\Gamma$-fixed points:
\[
\Gga = \{ x \in \ga \mid \gamma x = x \textup{ for all } \gamma \in \Gamma \}.
\]

Let $\maxspec (A)$ denote the set of maximal ideals of $A$. Notice that the action of $\Gamma$ on $A$ induces an action of $\Gamma$ on $\maxspec(A)$ that is explicitly given by $\gamma \sfm = \{ \gamma a \mid a \in \sfm \} \in \maxspec(A)$ for all $\sfm \in \maxspec(A)$ and $\gamma \in \Gamma$.  For most finite-dimensional simple Lie superalgebras $\g$, if $\Gamma$ is a finite abelian group acting freely on $\maxspec (A)$, then every finite-dimensional irreducible $\Gga$-module can be described in terms of generalized evaluation modules.  These generalized evaluation modules are defined in the following way.  Given $\sfm \in \maxspec(A)$ and $n \in \bb Z_{>0}$, define ${\rm ev}_{\sfm^n}$ to be the homomorphism of Lie superalgebras given by the composition
\[
{\rm ev}_{\sfm^n} 
\colon \ga 
\to \ga / \lie g \otimes \sfm^n
\iso \ga / \sfm^n.
\]

\begin{center}\textit{
For the rest of this subsection, assume that $\Gamma$ is a finite group acting freely on $\maxspec (A)$.
}\end{center}

In this case, the restriction of ${\rm ev}_{\sfm^n}$ to $\Gga$ is surjective (see \cite[Lemma~5.6]{savage14}), and induces a surjective homomorphism of Lie superalgebras
\[
{\rm ev}_{\sfm^n}^\Gamma \colon \Gga \to \ga / \sfm^n.
\]
Given a $\ga / \sfm^n$-module $V$ with associated representation $\rho \colon \ga / \sfm^n \to \lie{gl}(V)$, define $\Gev{\sfm^n} (V)$ to be the $\Gga$-module with associated representation given by the pull-back of $\rho$ along ${\rm ev}_{\sfm^n}^\Gamma$,
\[
\Gev{\sfm^n}(\rho) \colon
\Gga \pra[{\rm ev}_{\sfm^n}^\Gamma]
\ga/\sfm^n \pra[\ \rho \ ]
\lie{gl} (V).
\]

Given $\sfm \in \maxspec(A)$ and $n \in \bb Z_{>0}$, denote by ${\rm Irred}(\ga/\sfm^n)$ the set of isomorphism classes of finite-dimensional irreducible $\ga/\sfm^n$-modules, and denote by $\cal R$ the disjoint union
\[
\cal R =
\bigsqcup_{\atop{n \in \bb Z_{>0}}{\sfm \in \maxspec(A)}} {\rm Irred}(\ga/\sfm^n).
\]
Notice that the action of $\Gamma$ on $\ga$ induces an action of $\Gamma$ on $\cal R$.  Namely, let $[V]$ be an element in $\cal R$, and let $V$ be a representative of the class $[V]$ with associated representation $\rho \colon \ga / \sfm^n \to \lie{gl}(V)$. For each $\gamma \in \Gamma$, define $\gamma\cdot [V]$ in $\cal R$ to be the isomorphism class of the $\ga / (\gamma\sfm)^n$-module $V^\gamma$, whose associated representation $\rho^\gamma \colon \ga / (\gamma\sfm)^n \to \lie{gl}(V)$ is given by $\rho^\gamma (x) = \rho (\gamma^{-1} x)$, for all $x \in \ga / (\gamma\sfm)^n$.

Denote by $\cal P$ the set of $\Gamma$-equivariant functions $\pi \colon \maxspec(A) \to \cal R$ satisfying the following conditions:
\begin{enumerate}[$\bullet$]
\item For each $\sfm \in \maxspec(A)$, $\pi (\sfm) \in {\rm Irred}(\ga/\sfm^n)$ for some $n > 0$;

\item $\pi(\sfm)$ is the isomorphism class of the trivial (one-dimensional irreducible) module for all but finitely many $\sfm \in \maxspec(A)$.
\end{enumerate}
Notice that for any two representatives $V$ and $V'$ of $\pi(\sfm)\in {\rm Irred} (\ga/\sfm^n)$, there is an isomorphism of $\Gga$-modules $\Gev{\sfm^n} V \cong \Gev{\sfm^n} V'$.  Thus we will abuse notation and for each maximal ideal $\sfm \subseteq A$, we will denote by $\pi(\sfm)$ an arbitrary but fixed choice of $(\ga/\sfm^n)$-module representative of $\pi(\sfm)$.

Given $\pi \in \cal P$, define its support to be $\Supp(\pi) = \{ \sfm \in \maxspec(A) \mid \pi(\sfm) \textup{ is nontrivial} \}$, and let $\Supp_*(\pi)$ be a subset of $\maxspec(A)$ which contains exactly one element of each $\Gamma$-orbit in $\Supp(\pi)$. Since every $\pi \in \cal P$ is $\Gamma$-equivariant, up to isomorphism, the $\Gga$-module $\bightimes_{\sfm \in \Supp_* (\pi)} \Gev{\sfm^n} \pi(\sfm)$ is independent of the choice of $\Supp_* (\pi)$ (see \cite[Lemma~5.9]{savage14}).  Thus for every $\pi \in \cal P$, we fix an arbitrary subset $\Supp_*(\pi)$ as above and define $\cal V(\pi)$ to be the $\Gga$-module
\[
\cal V(\pi) =
\widehat{\bigotimes_{\sfm \in \Supp_*(\pi)}} \Gev{\sfm^n}\pi(\sfm).
\]

For most finite-dimensional simple Lie superalgebras, it is known that every finite-dimensional irreducible $\Gga$-module is isomorphic to $\cal V(\pi)$ for a unique $\pi \in \cal P$.  This was proved by Savage when $\g$ is a basic Lie superalgebra (see \cite[\textsection 7]{savage14}), by Bagci when $\g$ is a Lie superalgebra of Cartan type (see \cite[Theorem 4.3]{Bag15}), and by the first author, Moura and Savage when $\g$ is a queer Lie superalgebra (see \cite[Theorem~7.1]{CMS15}).  In Section \ref{sec:periplectic}, we will show that, if $\g$ is of type $\lie p$, then every finite-dimensional irreducible $\Gga$-module is also isomorphic to $\cal V(\pi)$ for a unique $\pi \in \cal P$, and that $n = 1$ for all $\sfm \in \Supp_*(\pi)$.  This completes the classification of finite-dimensional irreducible modules for equivariant map superalgebras associated to finite-dimensional simple Lie algebras.

Moreover, if $\lie g$ is of type II, $H$, $S$ or $\tilde S$, then every finite-dimensional irreducible $\Gga$-module is isomorphic to $\cal V(\pi)$ for a unique $\pi \in \cal P$ with $n = 1$ for all $\sfm \in \Supp_*(\pi)$ (these modules are called evaluation modules by Savage, see \cite[Definition~5.2]{savage14}).  It is important to remark that if $\lie g$ is of type I, then there exist finite-dimensional irreducible $\Gga$-modules which are isomorphic to generalized evaluation modules but not to evaluation modules (see \cite[\textsection 4]{rao13}).

\begin{remark} \label{rmk:gather.gen.ev}
Let $n, n' \in \bb Z_{> 0}$ with $n' \le n$, let $\sfm$ be a maximal ideal in $A$, and let $V$ be a $\ga/\sfm^{n'}$-module with corresponding representation $\rho \colon \ga/\sfm^{n'} \to \lie{gl} (V)$.  Since $n' \le n$, then $\sfm^n \subseteq \sfm^{n'}$, and therefore we have a canonical projection $\pi \colon \ga/\sfm^n \twoheadrightarrow \ga/\sfm^{n'}$.  We can thus regard $V$ as a $\ga/\sfm^n$-module with representation given by $\rho \circ \pi$.

Notice that as representations of $\Gga$, $\Gev{\sfm^{n'}} (\rho \circ \pi)$ and $\Gev{\sfm^{n}} (\rho)$ are the same.  Hence, the $\Gga$-modules $\Gev{\sfm^{n'}} V$ and $\Gev{\sfm^{n}} V$ are isomorphic.  Moreover, if $\rho$ is an irreducible representation of $\ga/\sfm^{n'}$, then $\rho \circ \pi$ is an irreducible representation of $\ga/\sfm^{n}$.  As a consequence, given any two finite-dimensional irreducible $\Gga$-modules $V$ and $V'$, one loses no generality in assuming that there exist integers $\ell, n \in \bb Z_{>0}$, maximal ideals $\sfm_1, \dotsc, \sfm_\ell \in \maxspec(A)$ in distinct $\Gamma$-orbits, and for each $i \in \{ 1, \dotsc, \ell \}$, irreducible $\ga/\sfm_i^n$-modules $V_i, V'_i$, such that $V \cong \bightimes_{i=1}^\ell \Gev{\sfm_i^n} V_i$ and $V' \cong \bightimes_{i=1}^\ell \Gev{\sfm_i^n} V'_i$.
\end{remark}

Given $\pi \in \cal P$, let $\ell, n_1, \dotsc, n_\ell \in \bb Z_{>0}$, $\sfm_1, \dotsc, \sfm_\ell$ be maximal ideals of $A$ in distinct $\Gamma$-orbits, and for each $i \in \{ 1, \dotsc, \ell \}$, let $V_i$ be an irreducible $\ga/\sfm_i^{n_i}$-module such that $\cal V (\pi) \cong \bightimes_{i=1}^\ell \Gev{\sfm_i^{n_i}} V_i$.  Recall from \eqref{eq:kappa.mods} the definition of $\kappa(V_1, \dotsc, V_\ell)$, and define $\kappa (\pi)$ to be
\begin{equation}\label{eq:kappa.function}
\kappa(\pi) = \kappa (V_1, \dotsc, V_\ell),
\end{equation}
and notice that $\cal V(\pi)^{\oplus 2^{\kappa(\pi)}} \cong \bigotimes_{\sfm \in \Supp(\pi)} \Gev{\sfm^n} \pi (\sfm)$ for all $\pi \in \cal P$.

\subsection{Ideals}\label{subsec:ideals}

Throughout this subsection, let $\lie g$ be a finite-dimensional simple Lie superalgebra and $A$ be an associative, commutative, finitely-generated $\C$-algebra with unit.  Define the support of an ideal $I \subseteq A$ to be
\[
\Supp (I) = \{ \sfm \in \maxspec A \mid I \subseteq \sfm\}.
\]

\begin{lemma} \label{lem:assoc-alg-facts}
Let $I$ and $J$ be ideals of $A$.
\begin{enumerate}[$(a)$]
\item \label{lem-item:support-power} For any $n>0$, we have $\Supp(I)=\Supp(I^n)$.

\item \label{lem-item:finiteCod-and-finiteSupp} If $A$ is finitely generated, then $\Supp(I)$ is finite if and only if $I$ has finite codimension in $A$.

\item \label{lem-item:product-intersection} If $\Supp(I) \cap \Supp(J) = \varnothing$, then $I+J=A$ and $IJ = I \cap J$.  Moreover, $I^m + J^n = A$ and $I^m J^n = I^m \cap J^n$ for any $m, n > 0$.

\item \label{lem-item:Noetherian-power-radical} If $A$ is Noetherian, then every ideal $I \subseteq A$ contains a power of its radical. In particular, $\rad I = \prod_{\sfm \in \Supp (I)} \sfm$.
\end{enumerate}
\end{lemma}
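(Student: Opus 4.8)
The plan is to establish the four parts in order using only elementary commutative algebra, each step feeding into the next. For part~\ref{lem-item:support-power}, the inclusion $\Supp(I)\subseteq\Supp(I^n)$ is immediate from $I^n\subseteq I$. For the reverse inclusion I would use that maximal ideals are prime: if $I^n\subseteq\sfm$ and $a\in I$, then $a^n\in I^n\subseteq\sfm$ forces $a\in\sfm$, so $I\subseteq\sfm$.

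For part~\ref{lem-item:finiteCod-and-finiteSupp}, put $B=A/I$, a finitely generated $\C$-algebra whose maximal spectrum is in natural bijection with $\Supp(I)$. If $I$ has finite codimension then $B$ is a finite-dimensional, hence Artinian, algebra and so has only finitely many maximal ideals. Conversely, suppose $\Supp(I)$ is finite, say $\maxspec(B)=\{\sfm_1,\dots,\sfm_k\}$. Since $B$ is Jacobson (being finitely generated over a field), its nilradical $N$ equals $\sfm_1\cap\cdots\cap\sfm_k$; by the Nullstellensatz each $B/\sfm_i\cong\C$, so $B/N$ embeds into $\prod_i B/\sfm_i$ and is therefore finite-dimensional. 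As $B$ is Noetherian, $N$ is finitely generated and nilpotent, say $N^r=0$, and each $N^j/N^{j+1}$ is a finitely generated $B/N$-module, hence finite-dimensional over $\C$; filtering $B$ by the chain $B\supseteq N\supseteq\cdots\supseteq N^r=0$ then gives $\dim_\C B<\infty$, i.e.\ $I$ has finite codimension.

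For part~\ref{lem-item:product-intersection}, the hypothesis $\Supp(I)\cap\Supp(J)=\varnothing$ says that no maximal ideal contains $I+J$, so $I+J=A$. Writing $1=i+j$ with $i\in I$ and $j\in J$, and using that $IJ\subseteq I\cap J$ always holds, for $x\in I\cap J$ one has $x=xi+xj$ with $xi\in IJ$ and $xj\in IJ$, whence $I\cap J\subseteq IJ$ and so $IJ=I\cap J$. The ``moreover'' follows by combining this with part~\ref{lem-item:support-power}: $\Supp(I^m)\cap\Supp(J^n)=\Supp(I)\cap\Supp(J)=\varnothing$, so the first assertion of part~\ref{lem-item:product-intersection} applies to $I^m$ and $J^n$.

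For part~\ref{lem-item:Noetherian-power-radical}, since $A$ is Noetherian the ideal $\rad I$ is finitely generated, say by $a_1,\dots,a_k$ with $a_i^{n_i}\in I$; then $(\rad I)^N$ is generated by the products $a_1^{e_1}\cdots a_k^{e_k}$ with $\sum_i e_i=N$, and for $N$ large enough (e.g.\ $N=\sum_i n_i$) a pigeonhole argument forces some $e_i\ge n_i$, so every such product lies in $I$ and $(\rad I)^N\subseteq I$. For the final identity, when $A$ is moreover a finitely generated $\C$-algebra (as in the applications of this lemma) it is Jacobson, so $\rad I=\bigcap_{\sfm\in\Supp(I)}\sfm$ by the Nullstellensatz; and when $\Supp(I)$ is finite --- which by part~\ref{lem-item:finiteCod-and-finiteSupp} is exactly the case $\dim_\C A/I<\infty$ in which the product on the right-hand side makes sense --- the distinct maximal ideals occurring are pairwise comaximal, so iterating part~\ref{lem-item:product-intersection} rewrites this intersection as $\prod_{\sfm\in\Supp(I)}\sfm$. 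The only point that needs any care is the converse in part~\ref{lem-item:finiteCod-and-finiteSupp}, where one must combine nilpotence of the nilradical of a Noetherian ring with the Nullstellensatz; everything else is routine.
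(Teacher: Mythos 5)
Your argument is correct. For part~(\ref{lem-item:support-power}) you argue exactly as the paper does: the containment $\Supp(I)\subseteq\Supp(I^n)$ from $I^n\subseteq I$, and the reverse containment from primality of maximal ideals. The difference is in parts~(\ref{lem-item:finiteCod-and-finiteSupp})--(\ref{lem-item:Noetherian-power-radical}): the paper does not prove these at all, but simply cites \cite[\S 2.1]{savage14}, whereas you supply complete, self-contained proofs using standard commutative algebra --- passing to $B=A/I$ and using that a finite-dimensional algebra is Artinian, that a finitely generated $\C$-algebra is Jacobson with $B/\sfm\cong\C$ by the Nullstellensatz, nilpotence of the nilradical in a Noetherian ring, the comaximality argument for $IJ=I\cap J$, and the pigeonhole bound $(\rad I)^N\subseteq I$ with $N=\sum_i n_i$. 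All of these steps are sound. You also correctly flag a point the paper glosses over: the identity $\rad I=\prod_{\sfm\in\Supp(I)}\sfm$ in part~(\ref{lem-item:Noetherian-power-radical}) is not a formal consequence of the Noetherian hypothesis alone, but needs $A$ to be a finitely generated (hence Jacobson) $\C$-algebra and $\Supp(I)$ finite, which is exactly the setting in which the lemma is applied (e.g.\ in Proposition~\ref{prop:ann.ideals.J(M)}). What the citation buys the paper is brevity; what your route buys is independence from \cite{savage14} and an explicit record of where each hypothesis (finite generation, Noetherianity, finiteness of the support) actually enters.
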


\begin{proof}
To prove part \eqref{lem-item:support-power}, fix $n > 0$.  It is clear that $\Supp(I) \subseteq \Supp (I^n)$.  The reverse inclusion follows from the fact that maximal ideals are also prime. Indeed, suppose $\sfm$ is a maximal ideal containing $I^n$.  Then for all $a \in I$, we have $a^n \in \sfm$.  Since maximal ideals are prime ideals, this implies that $a \in \sfm$, showing that $I \subseteq \sfm$.  The proofs of parts \eqref{lem-item:finiteCod-and-finiteSupp}, \eqref{lem-item:product-intersection} and \eqref{lem-item:Noetherian-power-radical} can be found in \cite[\S 2.1]{savage14}.
\end{proof}

The next result is a generalization of \cite[Corollary~4.17]{savage14} and will be used in the proof of Lemma~\ref{lem:h1(ga,M)}.

\begin{proposition} \label{prop:ann.fin.dim}
Let $\Gamma$ be a finite group acting freely on $\maxspec (A)$ and $M$ be a finite-di\-men\-sion\-al $(\ga)^\Gamma$-module. Then there exist $\ell, n \in \bb Z_{>0}$ and $\sfm_1, \dots, \sfm_\ell \in \maxspec(A)$, such that $\sfm_1^n \cdots \sfm_\ell^n$ is a $\Gamma$-invariant finite-codimensional ideal of $A$ and $(\lie g \otimes \sfm_1^n \cdots \sfm_\ell^n)^\Gamma M = 0$.
\end{proposition}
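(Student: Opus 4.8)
The plan is to reduce the $\Gga$-module statement to the corresponding $\ga$-module statement (Proposition~\ref{prop:ann.ideals.J(M)}) and then fix up the lack of $\Gamma$-invariance. First I would regard $M$ as a module over $\Gga$, but to apply Proposition~\ref{prop:ann.ideals.J(M)} I need an honest $\ga$-module. The natural move is to consider the induced module $\widetilde M = \mathrm{Ind}_{\Gga}^{\ga} M = U(\ga) \otimes_{U(\Gga)} M$; since $\ga$ is a finitely generated module over $\Gga$ (because $A$ is finitely generated and $\Gamma$ is finite), $\widetilde M$ is still finite-dimensional. Applying Proposition~\ref{prop:ann.ideals.J(M)} to $\widetilde M$ yields integers $\ell, n > 0$ and maximal ideals $\sfm_1, \dotsc, \sfm_\ell$ with $(\g \otimes \sfm_1^n \cdots \sfm_\ell^n) \widetilde M = 0$, and restricting this action back down, $(\g \otimes \sfm_1^n \cdots \sfm_\ell^n)^\Gamma M = 0$ as well.

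The remaining issue is that the ideal $I := \sfm_1^n \cdots \sfm_\ell^n$ obtained this way need not be $\Gamma$-invariant. To fix this, I would replace $I$ by $I' := \bigcap_{\gamma \in \Gamma} \gamma I$, which is manifestly $\Gamma$-invariant. Since $\Gamma$ is finite and each $\gamma I$ is finite-codimensional (as $I$ is), $I'$ is again finite-codimensional, so by Lemma~\ref{lem:assoc-alg-facts}\eqref{lem-item:finiteCod-and-finiteSupp} it has finite support, say $\Supp(I') = \{\sfn_1, \dotsc, \sfn_k\}$; note $\Supp(I') = \bigcup_{\gamma} \gamma\{\sfm_1, \dotsc, \sfm_\ell\}$ is $\Gamma$-stable. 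Because $A$ is finitely generated and hence Noetherian, Lemma~\ref{lem:assoc-alg-facts}\eqref{lem-item:Noetherian-power-radical} gives $\rad I' = \sfn_1 \cdots \sfn_k$, and there is some $N > 0$ with $(\rad I')^N = \sfn_1^N \cdots \sfn_k^N \subseteq I'$. The product $J := \sfn_1^N \cdots \sfn_k^N$ is $\Gamma$-invariant (its set of factors is permuted by $\Gamma$), finite-codimensional, and satisfies $J \subseteq I' \subseteq I$; hence $\g \otimes J \subseteq \g \otimes I$, so $(\g \otimes J)^\Gamma M \subseteq (\g \otimes I)^\Gamma M = 0$. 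Relabelling $\sfn_1, \dotsc, \sfn_k$ as $\sfm_1, \dotsc, \sfm_\ell$ and $N$ as $n$ gives exactly the claimed form $\sfm_1^n \cdots \sfm_\ell^n$ with all data $\Gamma$-invariant.

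I expect the main obstacle to be the first step: arguing cleanly that passing from the $\Gga$-module $M$ to some $\ga$-module (such as the induced module) is both legitimate and keeps finite-dimensionality, and that the resulting annihilation statement can be pulled back to $\Gga$. One should double-check that $\ga$ is finitely generated as a (left or right) $U(\Gga)$-module — this uses that $A^\Gamma \subseteq A$ is a finite ring extension (standard, since $\Gamma$ is finite and $A$ is a finitely generated $\C$-algebra) together with the surjectivity statements for $\mathrm{ev}$ already cited in Section~\ref{ss:map.superalgebras}. An alternative that sidesteps induction: one can instead observe directly that $\ker(\rho) \cap \Gga$, where $\rho \colon \Gga \to \lie{gl}(M)$ is the representation, is a finite-codimensional ideal of $\Gga$, and then use that ideals of $\Gga$ of finite codimension contain $(\g \otimes I)^\Gamma$ for a finite-codimensional $\Gamma$-invariant ideal $I$ of $A$ (an analogue of Lemma~\ref{lem:non-trivial-action} in the equivariant setting, as in \cite{savage14}); then the $\Gamma$-invariant refinement of $I$ to a product of prime powers proceeds exactly as in the second paragraph. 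Either route reduces the proof to the already-established non-equivariant Proposition~\ref{prop:ann.ideals.J(M)} plus the elementary finiteness facts in Lemma~\ref{lem:assoc-alg-facts}.
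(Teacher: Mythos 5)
The second half of your argument (forcing $\Gamma$-invariance by passing to $\bigcap_{\gamma \in \Gamma} \gamma I$ and then to a power of its radical) is fine, and it parallels what the paper does, namely enlarging the set $\{\sfm_1, \dotsc, \sfm_k\}$ to a $\Gamma$-stable set of maximal ideals. The genuine gap is in your first step. The induced module $\widetilde M = U(\ga) \otimes_{U(\Gga)} M$ is \emph{not} finite-dimensional: by the PBW theorem, $U(\ga)$ is a free right $U(\Gga)$-module whose basis consists of the ordered monomials in a basis of any homogeneous vector-space complement of $\Gga$ in $\ga$. Unless $\Gamma$ acts trivially (in which case $\Gga = \ga$ and there is nothing to prove), such a complement contains a nonzero even element $c$, and already the powers $c^k$ give infinitely many basis monomials; hence $\widetilde M$ is infinite-dimensional whenever $M \neq 0$. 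The fact that $A$ is a finite module over $A^\Gamma$ (or that $\ga$ is finitely generated over $\Gga$ in any reasonable sense) controls the size of the complement as a module, not the rank of $U(\ga)$ over $U(\Gga)$, which is what would be needed. So Proposition~\ref{prop:ann.ideals.J(M)} cannot be applied to $\widetilde M$, and the main route collapses. The nontrivial input that the paper uses instead is \cite[Proposition~8.5]{savage14}: every finite-dimensional $\Gga$-module is the restriction of a finite-dimensional $\ga$-module $M'$. With $M'$ in hand one applies Proposition~\ref{prop:ann.ideals.J(M)} to $M'$ and then closes up under the $\Gamma$-action exactly as you do.

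Your alternative route is closer to a correct proof, but it hinges on the unproved assertion that every finite-codimensional ideal of $\Gga$ contains $(\lie g \otimes I)^\Gamma$ for some $\Gamma$-invariant finite-codimensional ideal $I$ of $A$. This is not Lemma~\ref{lem:non-trivial-action}, which describes ideals of $\ga$ rather than of the fixed-point subalgebra $\Gga$, and no such equivariant statement is established in this paper; proving it requires the freeness of the $\Gamma$-action and the simplicity of $\g$ and is essentially of the same depth as the restriction theorem \cite[Proposition~8.5]{savage14} that the paper cites. If you replace that assertion by a precise citation (or simply invoke \cite[Proposition~8.5]{savage14} and argue as the paper does), your $\Gamma$-invariance argument then completes the proof.
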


\begin{proof}
We begin by proving the case where $\Gamma$ is trivial.  Let $\rho \colon \lie g \otimes A \to \lie{gl} (M)$ be the representation of $\ga$ corresponding to $M$. Notice that $\ker\rho$ is a finite-codimensional ideal of $\ga$, since $M$ is finite dimensional. Thus, by \cite[Proposition~8.1]{savage14}, $\ker\rho$ must be of the form $\g\otimes I$, for some finite-codimensional ideal $I \subseteq A$. Now, by Lemma~\ref{lem:assoc-alg-facts}\eqref{lem-item:finiteCod-and-finiteSupp}, the fact that $I$ is finite-codimensional implies that $I$ has finite support; that is, $\Supp(I) = \{ \sfm_1, \ldots, \sfm_\ell \}$ for some $\ell > 0$.  Since these maximal ideals are pairwise distinct, the radical of $I$ is given by $\rad I = \sfm_1 \cdots \sfm_\ell$.  Moreover, since $A$ is assumed to be finitely generated, by Lemma~\ref{lem:assoc-alg-facts}\eqref{lem-item:Noetherian-power-radical}, there exists $n > 0$, such that $(\rad I)^n \subseteq I$; that is, such that $\sfm_1^n \cdots \sfm_\ell^n \subseteq I$.  We thus conclude that there exist $n, \ell \in \bb Z_{>0}$ and $\sfm_1, \dots, \sfm_\ell \in \maxspec(A)$ such that $(\lie g \otimes \sfm_1^n \cdots \sfm_\ell^n) M = 0$.

For the case where $\Gamma$ acts non trivially on M, we recall that, by \cite[Proposition~8.5]{savage14}, $M$ is the restriction of a finite-dimensional $\ga$-module $M'$.  From the case where $\Gamma$ is trivial, since $M'$ is finite dimensional, there exist $n, k \in \bb Z_{>0}$ and maximal ideals $\sfm_1, \dots, \sfm_k \subseteq A$, such that $(\lie g \otimes \sfm_1^n \cdots \sfm_k^n) M = 0$.  Consider all ideals of the form $\gamma \sfm_i$, with $\gamma \in \Gamma$ and $i \in \{1, \dotsc, k\}$.  Since $\Gamma$ is a finite group, we can enumerate the elements of $\{ \gamma\sfm_i \mid \gamma \in \Gamma, i \in \{1, \dotsc, k\} \}$ as $\sfm_1, \dotsc, \sfm_\ell$, with $\ell \le |\Gamma| k$.  (Notice that $\ell \le |\Gamma| k$ only if $\sfm_i \notin \Gamma \sfm_j$, for $1 \le i \ne j \le k$.)  Moreover, notice that $\sfm_1^n \cdots \sfm_\ell^n$ is a $\Gamma$-invariant finite-codimensional ideal and
\[
(\g\otimes\sfm_1^n \cdots \sfm_\ell^n)^\Gamma M
= (\g\otimes\sfm_1^n \cdots \sfm_k^n)^\Gamma M'
\subseteq (\g\otimes\sfm_1^n \cdots \sfm_k^n)M'
=0.
\qedhere
\]
\end{proof}

\subsection{Cohomology of Lie superalgebras} \label{ss:homology.cohomology}

Let $\lie g = \lie g_{\bar0} \oplus \lie g_{\bar1}$ be a Lie superalgebra and $V, U$ be $\lie g$-modules.  As usual, the $n$-th extension group $\Ext^n_{\lie g}(V, U)$ denotes the $n$-th right derived functor of $\hom_{U(\lie g)}(V, -)$ applied to $U$.  In particular, the cohomology of $\lie g$ with coefficients in $U$, is $\h^\bullet (\g, U) = \Ext^\bullet_\lie g (\C, U)$.

In order to compute $\Ext^n_{U(\lie g)}(V,U)$, first notice that $\Lambda^n \lie g \otimes V$ and $\hom_\C (\Lambda^n \lie g, V)$ are naturally $\bb Z_2$-graded. Namely, for all $n \ge 0$ and $z \in \bb Z_2$,
\begin{gather*}
(\Lambda^n \lie g \otimes V)_z = \{ x_1 \wedge \dotsb \wedge x_n \otimes v \mid |x_1| + \dotsb + |x_n| + |v| = z \}, \\
\hom_\C (\Lambda^n \lie g, V)_z = \{ f \colon \Lambda^n \lie g \to V \mid f(\lambda) \in V_{{w+z}}, \ \lambda \in (\Lambda^n\lie g)_{w}, w \in \bb Z_2 \}.
\end{gather*}
Then consider $\hom_\C (V, U)$ as a $\g$-module, where, for homogeneous elements $f \in \hom_\C (V, U)$ and $x\in \g$, we have
	\[
(xf)(v)=x (f(v))-(-1)^{|x||f|} f(x v) \quad \text{ for all } v \in V.
	\]
Finally, identify $\Ext^\bullet_\lie g (V,U)$ with the cohomology of the cocomplex 
\begin{equation} \label{chevalley.cocomplex}
0 \pra \hom_\C (V, U) \pra[\partial^0] \hom_\C (\g, \hom_\C(V, U)) \pra[\partial^1] \hom_{\C} (\Lambda^2 \g , \hom_\C(V, U)) \pra[\partial^2] \dotsb ,
\end{equation}
where, for all $f \in \hom_\C (\Lambda^n \g, \hom_\C(V, U))$, its image under the differential $\partial^n$ extends linearly
\begin{align} \label{eq:codiffs}
\partial^nf & (x_0 \wedge \dots \wedge x_n) \notag \\
&= \sum_{0 \le i \le n} (-1)^{i+|x_i|(|f|+|x_0|+ \cdots + |x_{i-1}|)}x_i f(x_0 \wedge \dots \wedge \widehat{x_i} \wedge \dots \wedge x_n) \\
&{\quad}+  \sum_{0 \le i < j \le n} (-1)^{j+|x_j|(|x_{i+1}|+ \cdots + |x_{j-1}|)} f(x_0 \wedge \dots x_{i-1}\wedge [x_i, x_j] \wedge x_{i+1}\wedge\dots \wedge \widehat{x_j} \wedge \dots \wedge x_n), \notag
\end{align}
for all homogeneous elements $x_0, \dotsc, x_n \in \g$, $n \ge 0$.  Notice that, the differentials $\partial^\bullet$ respect the $\bb Z_2$-grading on $\hom_\C (\Lambda^\bullet \g, \hom_\C(V, U))$, thus it induces a $\bb Z_2$-grading on $\Ext^\bullet_\g (V, U)$.  We denote $\h^\bullet (\g, U)_z$ by $\h^\bullet_z (\g, U)$ for all $z \in \bb Z_2$.

We now state some homological techniques that will be used in this paper.  This first lemma reduces the computation of extensions between finite-dimensional modules to the computation of cohomologies.

\begin{lemma} \label{lem:isos}
If $\lie s$ is a Lie superalgebra and $U$, $V$ and $W$ are finite-dimensional $\lie s$-modules, then we have the following isomorphisms of $\lie s$-modules:
\begin{gather*}
U \otimes V \cong V \otimes U, \quad (U\otimes V)^* \cong U^*\otimes V^*, \\
\hom_\C (U \otimes V, W) \cong \hom_\C (U, V^*\otimes  W), \textup{ and} \\
\Ext^n_{\lie s} (V,U) \cong \h^n (\lie s, V^* \otimes U), \ \textup{for all } n>0.
\end{gather*}
\end{lemma}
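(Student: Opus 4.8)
The plan is to establish each isomorphism by unwinding the standard hom-tensor adjunction in the $\bb Z_2$-graded setting, being careful about Koszul signs, and then transport the differential-complex structure through these vector-space isomorphisms to identify cohomologies. First I would recall that for finite-dimensional super vector spaces the natural evaluation and coevaluation maps make $V^*$ a dual object, so $\hom_\C(U,V^*\otimes W)\cong\hom_\C(U\otimes V,W)$ as super vector spaces, with the isomorphism sending $f\mapsto\big(u\otimes v\mapsto(\mathrm{ev}_V\otimes\id_W)(\id\otimes\sigma)(f(u)\otimes v)\big)$ up to the appropriate sign; one then checks this intertwines the $\g$-actions, where $\g$ acts on $\hom_\C(-,-)$ and on tensor products by the usual super-Leibniz rule and on duals by $(x\cdot\phi)(v)=-(-1)^{|x||\phi|}\phi(x\cdot v)$. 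The commutativity $U\otimes V\cong V\otimes U$ is the braiding $u\otimes v\mapsto(-1)^{|u||v|}v\otimes u$, and $(U\otimes V)^*\cong U^*\otimes V^*$ again uses finite-dimensionality together with the braiding to match the pairings; both are routine once the sign conventions from Section~\ref{ss:homology.cohomology} are fixed.

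For the last isomorphism $\Ext^n_\g(V,U)\cong\h^n(\g,V^*\otimes U)$, the key step is to observe that the cochain space $\hom_\C(\Lambda^n\g\otimes V,U)$ appearing in the cocomplex \eqref{chevalley.cocomplex} is isomorphic, by the hom-tensor adjunction just established, to $\hom_\C(\Lambda^n\g\otimes V\otimes\C,V^*\otimes U)=\hom_\C(\Lambda^n\g,V^*\otimes U)$ (the cochain space computing $\h^n(\g,V^*\otimes U)=\Ext^n_\g(\C,V^*\otimes U)$). One then verifies that the Chevalley--Eilenberg codifferential $d^n$ from \eqref{eq:codiffs} for the pair $(V,U)$ corresponds under this identification to the codifferential for the pair $(\C,V^*\otimes U)$: the first sum in \eqref{eq:codiffs}, which involves the module action on the target, becomes the action of $x_i$ on $V^*\otimes U$ (here the action on $V$ inside $\hom_\C(V,U)$ gets absorbed into the action on $V^*$), while the second sum, involving only the bracket in $\g$, is unchanged. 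Passing to cohomology then gives the claimed isomorphism. I would cite \cite[Lemma 3.1.13]{kumar02} and note that the only modification needed is the insertion of Koszul signs, exactly as flagged before the lemma statement.

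I expect the main obstacle to be bookkeeping of the Koszul signs rather than any conceptual difficulty: one must make sure that the adjunction isomorphism, the differentials \eqref{eq:diffs}/\eqref{eq:codiffs}, and the $\g$-action on duals all use mutually compatible sign conventions, so that the square
\[
\begin{array}{ccc}
\hom_\C(\Lambda^n\g\otimes V,U) & \longrightarrow & \hom_\C(\Lambda^n\g,V^*\otimes U)\\[1ex]
\downarrow d^n & & \downarrow d^n\\[1ex]
\hom_\C(\Lambda^{n+1}\g\otimes V,U) & \longrightarrow & \hom_\C(\Lambda^{n+1}\g,V^*\otimes U)
\end{array}
\]
genuinely commutes and not merely up to sign. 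Since all modules are finite-dimensional, no convergence or boundedness issues arise, and once the signs are pinned down the verification is a direct, if tedious, computation. I would therefore present the proof by first recording the sign-corrected adjunction isomorphisms as super vector spaces, then checking $\g$-equivariance, and finally observing compatibility with the differentials to conclude.
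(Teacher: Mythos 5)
Your proposal is correct and follows essentially the same route as the paper, which gives no detailed argument but simply notes that a proof is obtained by modifying \cite[Lemma 3.1.13]{kumar02} to the super setting. Your outline --- the Koszul-signed hom-tensor adjunction, the check of $\g$-equivariance, and the identification of the cocomplex \eqref{chevalley.cocomplex} for $(V,U)$ with that for $(\C, V^*\otimes U)$ compatibly with the differentials \eqref{eq:codiffs} --- is exactly that modification.
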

\begin{proof}
The proofs are similar to those in \cite[Lemma 3.1.13]{kumar02}.
\end{proof}

The following lemma reduces the computation of the cohomology of any trivial $\lie s$-module; that is, any $\lie s$-module $V$ such that $\lie s \cdot V=0$, to that of the trivial $\lie s$-module $\C$.

\begin{lemma} \label{lem:triv.mods}
If $\lie s$ is a Lie superalgebra and $V$ is a finite-dimensional trivial $\lie s$-module, then
\[
\h^\bullet (\lie s, V) \cong \h^\bullet(\lie s, \C) \otimes V.
\]
\end{lemma}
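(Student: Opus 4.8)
The statement to prove is Lemma~\ref{lem:triv.mods}: if $\lie g$ is a Lie superalgebra and $V$ is a trivial $\lie g$-module, then $\h^\bullet(\g,V) \cong \h^\bullet(\g,\C)\otimes V$. The plan is to write down the Chevalley--Eilenberg cocomplex \eqref{chevalley.cocomplex} computing $\h^\bullet(\g,V) = \Ext^\bullet_{\g}(\C,V)$ and observe that, because the $\g$-action on $V$ is trivial, the entire cocomplex splits as the tensor product of the cocomplex computing $\h^\bullet(\g,\C)$ with the fixed vector space $V$.

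First I would recall from Section~\ref{ss:homology.cohomology} that $\h^\bullet(\g,V)$ is the cohomology of the cocomplex with terms $\hom_\C(\Lambda^n\g, V)$ and differentials $d^n$ given by \eqref{eq:codiffs}. When $V = \C$ is the trivial module, the first sum in \eqref{eq:codiffs} vanishes (since $x_i f(\cdots) = 0$), leaving only the second sum; call this differential $\bar d^n$. Now for general trivial $V$, the first sum in \eqref{eq:codiffs} again vanishes for the same reason — every occurrence of the $\g$-action $x_i f(\cdots)$ is zero — so the differential on $\hom_\C(\Lambda^n\g, V)$ is exactly the ``combinatorial'' part involving only the bracket of $\g$ and no action on $V$. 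The key identification is then the natural isomorphism of vector spaces $\hom_\C(\Lambda^n\g, V) \cong \hom_\C(\Lambda^n\g,\C)\otimes V$ (valid since everything is finite-dimensional in the relevant degrees, or more simply since $V$ is just a coefficient space), under which the differential $\bar d^n$ corresponds to $\bar d^n_{\C}\otimes \id_V$. Taking cohomology of a cocomplex of the form $(C^\bullet\otimes V, \delta\otimes\id_V)$ gives $\h^\bullet(C^\bullet\otimes V) \cong \h^\bullet(C^\bullet)\otimes V$, which yields the claim. I would also note that this isomorphism respects the $\bb Z_2$-grading, since the grading on $\hom_\C(\Lambda^n\g,V)$ is precisely the tensor-product grading coming from $\hom_\C(\Lambda^n\g,\C)$ and $V$.

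The main point requiring a word of care — though it is not really an obstacle — is the compatibility of the differentials with the tensor decomposition: one must check that under $\hom_\C(\Lambda^n\g,V)\cong\hom_\C(\Lambda^n\g,\C)\otimes V$ the map $\bar d^n$ really becomes $\bar d^n_\C\otimes\id_V$ rather than acquiring sign corrections from the $\bb Z_2$-grading of $V$. This follows because the signs $\epsilon_{j,k}$ in \eqref{eq:codiffs} depend only on the parities $|x_j|,|x_k|$ of elements of $\g$ and not on $|v|$, so the differential is genuinely $V$-linear on the right tensor factor. Since the paper explicitly says ``Its proof follows directly from the definition of the differentials \eqref{eq:diffs}'', I would keep the write-up short: identify the two cocomplexes, observe the action term drops out, and invoke the Künneth-type triviality for tensoring a complex with a fixed vector space.

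\begin{proof}
Since $V$ is a trivial $\g$-module, the first sum on the right-hand side of \eqref{eq:codiffs} vanishes for every $f \in \hom_\C(\Lambda^n\g, V)$, so the differential of the cocomplex \eqref{chevalley.cocomplex} computing $\h^\bullet(\g,V) = \Ext^\bullet_\g(\C,V)$ is
\[
d^n f(x_0 \wedge \dotsb \wedge x_n)
= \sum_{j<k} (-1)^{\epsilon_{j,k}} f\bigl([x_j,x_k] \wedge x_0 \wedge \dotsb \wedge \widehat{x_j} \wedge \dotsb \wedge \widehat{x_k} \wedge \dotsb \wedge x_n\bigr),
\]
where $\epsilon_{j,k} = j+k+\eta_j+\eta_k+|x_j||x_k|$ depends only on the parities of elements of $\g$.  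The natural isomorphism of $\bb Z_2$-graded vector spaces $\hom_\C(\Lambda^n\g, V) \cong \hom_\C(\Lambda^n\g,\C) \otimes V$ identifies this differential with $d^n_\C \otimes \id_V$, where $d^\bullet_\C$ is the differential of the cocomplex computing $\h^\bullet(\g,\C)$ (obtained by the same formula, specialized to $V = \C$).  Hence the cocomplex computing $\h^\bullet(\g,V)$ is isomorphic to the tensor product of the cocomplex computing $\h^\bullet(\g,\C)$ with the fixed vector space $V$, and taking cohomology gives
\[
\h^\bullet(\g,V) \cong \h^\bullet(\g,\C) \otimes V,
\]
an isomorphism of $\bb Z_2$-graded vector spaces.
\end{proof}
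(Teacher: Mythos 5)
Your proof is correct and follows exactly the route the paper intends: the paper's own proof is the one-line remark that the claim ``follows directly from the definition of the differentials,'' and your write-up simply spells this out (the action term in \eqref{eq:codiffs} vanishes, the remaining differential is identified with $d^n_\C \otimes \id_V$ under $\hom_\C(\Lambda^n\g,V)\cong\hom_\C(\Lambda^n\g,\C)\otimes V$). The only caveat worth noting is that this last identification (and hence the lemma as stated) really uses that $V$ is finite-dimensional, since $\g$ may be infinite-dimensional in the applications; this imprecision is inherited from the paper's statement rather than introduced by you.
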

\begin{proof}
This proof follows directly from the definition of the differentials \eqref{eq:codiffs}.
\end{proof}

The following proposition is a special case of the well known K\"unneth formula.

\begin{proposition} \label{prop:kunneth}
If $\lie s^1, \lie s^2$ are Lie superalgebras, $U_1, V_1$ are $\lie s^1$-modules and $U_2, V_2$ are $\lie s^2$-modules, then
\[
\Ext^n_{\lie s^1 \oplus \lie s^2} (U_1 \otimes U_2, V_1 \otimes V_2) \cong 
\bigoplus_{p+q=n} \Ext^p_{\lie s^1} (U_1, V_1) \otimes \Ext^q_{\lie s^2} (U_2, V_2),
\quad n \ge 0.
\]
\end{proposition}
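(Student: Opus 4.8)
The plan is to follow the remark preceding the statement and reduce everything to the ordinary K\"unneth theorem \cite[Theorem~3.6.3]{weibel94} for tensor products of complexes of vector spaces, used twice, with universal enveloping superalgebras as the bridge. For a Lie superalgebra $\g$ write $U(\g)$ for its universal enveloping superalgebra. The cocomplex \eqref{chevalley.cocomplex} is obtained by applying $\hom_{U(\g)}(-,U)$ to a free resolution of $V$ in the category of $U(\g)$-modules, so $\Ext^\bullet_\g(V,U)=\Ext^\bullet_{U(\g)}(V,U)$ for all $\g$-modules $U,V$; this is all I will use about \eqref{chevalley.cocomplex}. I will also use the super Poincar\'e--Birkhoff--Witt theorem in the form of an isomorphism of $\bb Z_2$-graded associative algebras $U(\g^1\oplus\g^2)\cong U(\g^1)\otimes_\C U(\g^2)$, with the super tensor product multiplication on the right, under which the outer tensor product of a $\g^1$-module with a $\g^2$-module becomes the $U(\g^1)\otimes_\C U(\g^2)$-module of the same name.

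I would then produce a convenient resolution of $U_1\otimes_\C U_2$. When $\g^1,\g^2,U_1,U_2$ are finite-dimensional --- the case we need --- one may take the super Chevalley--Eilenberg resolutions $U(\g^i)\otimes_\C\Lambda^\bullet\g^i\otimes_\C U_i$, which are finite complexes of finitely generated free modules; call them $P_\bullet\to U_1$ and $Q_\bullet\to U_2$. The total complex $\operatorname{Tot}(P_\bullet\otimes_\C Q_\bullet)$, with the standard Koszul sign in its differential, consists of finitely generated projective $U(\g^1)\otimes_\C U(\g^2)$-modules, and by \cite[Theorem~3.6.3]{weibel94} --- there being no $\operatorname{Tor}$ correction terms since we work over the field $\C$ --- its homology is $\h_\bullet(P_\bullet)\otimes_\C\h_\bullet(Q_\bullet)=U_1\otimes_\C U_2$ concentrated in degree zero. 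Hence $\operatorname{Tot}(P_\bullet\otimes_\C Q_\bullet)\to U_1\otimes_\C U_2$ is a projective resolution over $U(\g^1)\otimes_\C U(\g^2)$.

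Applying $\hom_{U(\g^1)\otimes_\C U(\g^2)}(-,\,V_1\otimes_\C V_2)$ to this resolution, the finite generation of $P_p$ and $Q_q$ makes the canonical map
\[
\hom_{U(\g^1)}(P_p,V_1)\otimes_\C\hom_{U(\g^2)}(Q_q,V_2)\longrightarrow\hom_{U(\g^1)\otimes_\C U(\g^2)}\!\bigl(P_p\otimes_\C Q_q,\,V_1\otimes_\C V_2\bigr)
\]
an isomorphism, and these identify $\hom_{U(\g^1)\otimes_\C U(\g^2)}(\operatorname{Tot}(P_\bullet\otimes_\C Q_\bullet),\,V_1\otimes_\C V_2)$, up to the usual Koszul signs, with the total complex of $\hom_{U(\g^1)}(P_\bullet,V_1)\otimes_\C\hom_{U(\g^2)}(Q_\bullet,V_2)$. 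A second application of \cite[Theorem~3.6.3]{weibel94}, again with vanishing $\operatorname{Tor}$ because $\C$ is a field, then gives
\[
\Ext^n_{\g^1\oplus\g^2}(U_1\otimes U_2,\,V_1\otimes V_2)\cong\bigoplus_{p+q=n}\Ext^p_{\g^1}(U_1,V_1)\otimes_\C\Ext^q_{\g^2}(U_2,V_2),
\]
which is the assertion.

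The one genuinely delicate point --- and the step I expect to cost the most care --- is the sign bookkeeping: one must check that the super-PBW isomorphism, the differential on $\operatorname{Tot}(P_\bullet\otimes_\C Q_\bullet)$, and the splitting of $\hom$ over $U(\g^1)\otimes_\C U(\g^2)$ are all compatible with the $\bb Z_2$-gradings and with the differentials, so that after forgetting the algebra structure one is dealing with honest tensor products of cochain complexes of $\C$-vector spaces, to which \cite[Theorem~3.6.3]{weibel94} applies verbatim (its $\operatorname{Tor}$ term vanishing because $\C$ is a field --- which is why the formula has no correction terms). The remaining homological input is entirely standard. Alternatively, one could bypass enveloping algebras and argue directly from \eqref{chevalley.cocomplex}, using the super-vector-space isomorphism $\Lambda^\bullet(\g^1\oplus\g^2)\cong\Lambda^\bullet\g^1\otimes_\C\Lambda^\bullet\g^2$ together with the vanishing of cross brackets $[\g^1,\g^2]=0$ to exhibit the cocomplex for $\g^1\oplus\g^2$ as the tensor product of those for $\g^1$ and $\g^2$; the sign bookkeeping is essentially the same.
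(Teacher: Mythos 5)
Your proposal is correct and follows essentially the same route the paper indicates: the paper gives only a one-line pointer (``use universal enveloping algebras and modify \cite[Theorem 3.6.3]{weibel94}''), and your argument via the super-PBW isomorphism $U(\lie g^1\oplus\lie g^2)\cong U(\lie g^1)\otimes_\C U(\lie g^2)$, tensor products of Chevalley--Eilenberg resolutions, and two applications of the K\"unneth theorem over $\C$ is exactly the intended elaboration. Your restriction to the finite-dimensional situation (needed for the $\hom$--$\otimes$ interchange) matches the setting in which the paper actually applies the proposition, so nothing is lost.
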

\begin{proof}
The proof follows from \cite[Theorem 3.6.3]{weibel94} using standard arguments.
\end{proof}

The following result is a graded version of Lyndon-Hochschild-Serre spectral sequence. In the superalgebra setting, a filtration by powers of an ideal turns out to be $\bb Z_2$-graded, thus yielding two spectral sequences that converge respectively to even and odd cohomologies (see Section~\ref{s:transgression} for further details).

\begin{proposition} \label{prop:lhsss}
If $\lie s$ is a Lie superalgebra, $V$ is a $\lie s$-module and $\lie i \subseteq \lie s$ is an ideal, then there exist first-quadrant cohomology convergent spectral sequences
\[
E_{2}^{p,q} \cong \h^p_z (\lie s/\lie i, \h^q_z (\lie i, V)) \Rightarrow \h^{p+q}_z (\lie s, V), \quad z \in \bb Z_2.
\]
\end{proposition}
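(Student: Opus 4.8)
The plan is to run the standard construction of the Lyndon--Hochschild--Serre spectral sequence on the Chevalley--Eilenberg cocomplex \eqref{chevalley.cocomplex}, carried out separately on its even and odd parts. As noted just before the statement, the codifferentials $d^\bullet$ preserve the $\bb Z_2$-grading of $\hom_\C(\Lambda^\bullet\g, V)$, so for each $z\in\bb Z_2$ we get an ordinary cocomplex $(C^\bullet_z,d^\bullet)$ with $C^n_z=\hom_\C(\Lambda^n\g,V)_z$ and $\h^n(C^\bullet_z)=\h^n_z(\g,V)$; fixing $z$ and working with this complex is what produces the two spectral sequences in the statement. On $C^\bullet_z$ I would put the decreasing filtration
\[
F^p C^n_z=\bigl\{f\in C^n_z: f(x_1\wedge\dotsb\wedge x_n)=0 \textup{ whenever at least } n-p+1 \textup{ of the } x_i \textup{ lie in } \lie i\bigr\},
\]
so that $F^0C^n_z=C^n_z$ and $F^{n+1}C^n_z=0$. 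This filtration is exhaustive and of finite length in each cohomological degree, so the associated spectral sequence is automatically first-quadrant and convergent to $\h^\bullet_z(\g,V)$, with no separate convergence argument needed (in particular $\g$ need not be finite-dimensional).

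The first substantive point is that the codifferential respects the filtration, $d^n(F^pC^n_z)\subseteq F^pC^{n+1}_z$, which I would verify directly from \eqref{eq:codiffs}: in the first sum, omitting one entry decreases the number of $\lie i$-entries by at most one, and in the second sum the only terms that could cause trouble are $f([x_j,x_k]\wedge\dotsb)$ with $x_j,x_k\in\lie i$, for which $[x_j,x_k]\in\lie i$ because $\lie i$ is a subalgebra. The other role of the hypothesis on $\lie i$ is that, being an ideal, $\lie i$ acts trivially on $\h^\bullet(\lie i,V)$, so that the adjoint action of $\g$ makes each $\h^q(\lie i,V)$ into a $\bb Z_2$-graded $\g/\lie i$-module; this is what is needed to name the $E_2$-term.

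For the low pages I would choose a $\bb Z_2$-graded complement $\lie m$ with $\g=\lie i\oplus\lie m$ and use the super-exterior-algebra decomposition $\Lambda^n\g\cong\bigoplus_{p+q=n}\Lambda^p\lie m\otimes\Lambda^q\lie i$ to identify $\mathrm{gr}^pC^{p+q}_z\cong\hom_\C\bigl(\Lambda^p\lie m\otimes\Lambda^q\lie i,V\bigr)_z\cong\hom_\C\bigl(\Lambda^p(\g/\lie i),\hom_\C(\Lambda^q\lie i,V)\bigr)_z$. Tracing \eqref{eq:codiffs} through this isomorphism, the $E_0$-differential should become the Chevalley--Eilenberg codifferential of $\lie i$ (with the $\g/\lie i$-slots inert), so that $E_1^{p,q}\cong\hom_\C\bigl(\Lambda^p(\g/\lie i),\h^q(\lie i,V)\bigr)_z$, and then the $E_1$-differential should become the Chevalley--Eilenberg codifferential of $\g/\lie i$ with coefficients in $\h^q(\lie i,V)$, giving $E_2^{p,q}\cong\h^p\bigl(\g/\lie i,\h^q(\lie i,V)\bigr)_z$; unwinding the $\bb Z_2$-grading this is exactly the displayed isomorphism. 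I expect the main obstacle to be purely bookkeeping: keeping all the Koszul signs consistent in the decomposition $\Lambda^n\g\cong\bigoplus\Lambda^p\lie m\otimes\Lambda^q\lie i$ and in the induced splitting of \eqref{eq:codiffs}, and tracking the $\bb Z_2$-grading faithfully through $E_0\to E_1\to E_2$ (which is precisely why one gets two spectral sequences rather than one). Since these verifications are parallel to the super filtered-complex computations in \cite[Chapter~1, \textsection 6.5]{fuks86}, I would refer to that source for the details and record here only the two features special to this setting --- the subalgebra/ideal hypothesis on $\lie i$ and the splitting of everything according to $z\in\bb Z_2$.
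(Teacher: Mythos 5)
Your proposal is correct and matches the paper's route: the paper offers no argument of its own beyond citing \cite[Chapter~1, \textsection 6.5]{fuks86}, and what you reconstruct --- splitting the Chevalley--Eilenberg cocomplex by parity, filtering by the number of arguments in $\lie i$ \`a la Hochschild--Serre, checking $d$ preserves the filtration, and identifying $E_1$ and $E_2$ with the sign bookkeeping deferred to the same reference --- is exactly the standard construction that citation encapsulates.
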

\begin{proof}
This proof is given in \cite[Chapter 1, \textsection 6.5]{fuks86} and \cite[Theorem~16.6.6]{Mus12}.
\end{proof}

The following result is a superalgebra generalization of a well-known result for Lie algebra cohomology.  We record it as it will be used to prove Lemma~\ref{lem:com.even}.

\begin{lemma} \label{lem:h1aC}
For any Lie superalgebra $\lie s$, we have
\[
\h^1_{\bar 0} (\lie s, \C) \cong (\lie s_{\bar 0} / ([\lie s_{\bar 0}, \lie s_{\bar 0}] + [\lie s_{\bar 1}, \lie s_{\bar 1}]))^*
\quad \textup{and} \quad
\h^1_{\bar 1} (\lie s, \C) \cong \h^0 (\lie s_{\bar 0}, \lie s_{\bar 1}^*) \cong (\lie s_{\bar 1} / [\lie s_{\bar 0}, \lie s_{\bar 1}])^*.
\]
\end{lemma}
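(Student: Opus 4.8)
The plan is to compute $\h^1(\lie a, \C)$ directly from the cochain complex \eqref{chevalley.cocomplex} with $V = U = \C$, keeping careful track of the $\bb Z_2$-grading, and then to identify the even and odd parts with the stated quotients. First I would write down the relevant low-degree piece of the complex, namely $0 \to \hom_\C(\C, \C) \xrightarrow{d^0} \hom_\C(\lie a, \C) \xrightarrow{d^1} \hom_\C(\Lambda^2 \lie a, \C)$. Since $\C$ is the trivial module, the formula \eqref{eq:codiffs} shows that $d^0 = 0$ (all terms $x_i f(\cdots)$ vanish because $\lie a$ acts trivially), so $\h^1(\lie a, \C) = \ker d^1 = \{ f \in \lie a^* : f([x,y]) = 0 \text{ for all homogeneous } x, y \in \lie a \}$; again the first sum in \eqref{eq:codiffs} drops out, leaving only the bracket term. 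Thus a $1$-cocycle is exactly a linear functional on $\lie a$ that annihilates the span of all brackets $[x,y]$.

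Next I would split this according to parity. A functional $f \in \lie a^*$ is even iff it vanishes on $\lie a_{\bar 1}$ and odd iff it vanishes on $\lie a_{\bar 0}$, since $\C = \C_{\bar 0}$. For the even part: an even cocycle is a functional on $\lie a_{\bar 0}$ annihilating all brackets landing in $\lie a_{\bar 0}$, i.e.\ annihilating $[\lie a_{\bar 0}, \lie a_{\bar 0}] + [\lie a_{\bar 1}, \lie a_{\bar 1}]$ (the bracket $[\lie a_{\bar 0}, \lie a_{\bar 1}]$ lands in $\lie a_{\bar 1}$, on which $f$ already vanishes, so it imposes no condition). This gives $\h^1_{\bar 0}(\lie a, \C) \cong (\lie a_{\bar 0}/([\lie a_{\bar 0}, \lie a_{\bar 0}] + [\lie a_{\bar 1}, \lie a_{\bar 1}]))^*$. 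For the odd part: an odd cocycle is a functional on $\lie a_{\bar 1}$ annihilating all brackets landing in $\lie a_{\bar 1}$, i.e.\ annihilating $[\lie a_{\bar 0}, \lie a_{\bar 1}]$ (note $[\lie a_{\bar 1}, \lie a_{\bar 1}] \subseteq \lie a_{\bar 0}$ imposes nothing, and $[\lie a_{\bar 0}, \lie a_{\bar 0}] \subseteq \lie a_{\bar 0}$ likewise). This gives $\h^1_{\bar 1}(\lie a, \C) \cong (\lie a_{\bar 1}/[\lie a_{\bar 0}, \lie a_{\bar 1}])^*$. Finally I would observe that $(\lie a_{\bar 1}/[\lie a_{\bar 0}, \lie a_{\bar 1}])^* = (\lie a_{\bar 1})^{\lie a_{\bar 0}}{}^* $ viewed correctly: $\h^0(\lie a_{\bar 0}, \lie a_{\bar 1}^*) = (\lie a_{\bar 1}^*)^{\lie a_{\bar 0}}$, and a functional on $\lie a_{\bar 1}$ is $\lie a_{\bar 0}$-invariant iff it kills $\lie a_{\bar 0} \cdot \lie a_{\bar 1} = [\lie a_{\bar 0}, \lie a_{\bar 1}]$, giving the middle isomorphism $\h^1_{\bar 1}(\lie a, \C) \cong \h^0(\lie a_{\bar 0}, \lie a_{\bar 1}^*)$.

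There is no serious obstacle here; the only point requiring a little care is bookkeeping the sign conventions and parities in \eqref{eq:codiffs} to be sure that $d^0$ genuinely vanishes on the trivial module and that the cocycle condition is exactly vanishing on brackets, with no stray contributions from mixed-parity pairs. Everything else is elementary linear algebra: dualizing the short exact sequences $0 \to ([\lie a_{\bar 0}, \lie a_{\bar 0}] + [\lie a_{\bar 1}, \lie a_{\bar 1}]) \to \lie a_{\bar 0} \to \lie a_{\bar 0}/(\cdots) \to 0$ and $0 \to [\lie a_{\bar 0}, \lie a_{\bar 1}] \to \lie a_{\bar 1} \to \lie a_{\bar 1}/[\lie a_{\bar 0}, \lie a_{\bar 1}] \to 0$ and matching annihilators with the two parity-graded pieces of $\ker d^1$.
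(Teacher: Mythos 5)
Your proposal is correct and follows essentially the same route as the paper: compute $\ker d^1$ in the Chevalley--Eilenberg cocomplex with trivial coefficients (noting $d^0=0$), split by parity, and observe that an even cocycle must kill $[\lie a_{\bar 0}, \lie a_{\bar 0}] + [\lie a_{\bar 1}, \lie a_{\bar 1}]$ while an odd cocycle must kill $[\lie a_{\bar 0}, \lie a_{\bar 1}]$, which is exactly the $\lie a_{\bar 0}$-invariance condition defining $\h^0(\lie a_{\bar 0}, \lie a_{\bar 1}^*)$. The only cosmetic difference is that the paper phrases the parity split by describing the targets of $d^1_{\bar 0}$ and $d^1_{\bar 1}$ explicitly, whereas you verify the same facts by direct bookkeeping of which brackets impose conditions.
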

\begin{proof}
Recall that the cocomplex $\Lambda^\bullet \lie s^*$ is $\bb Z_2$-graded, that the differential $\partial^\bullet$ preserves this grading, and that it induces a $\bb Z_2$-grading on $\h^\bullet (\lie s, \C)$.  We will compute each graded component of $\h^1 (\lie s, \C)$.  First notice that the restriction of $\partial^1$ to the even part is $\partial^1_{\bar 0} \colon \lie s_{\bar 0}^* \to \Lambda^2 \lie s_{\bar 0}^* \oplus S^2 \lie s_{\bar 1}^*$, and that the restriction of $\partial^1$ to the odd part is $\partial^1_{\bar 1} \colon \lie s_{\bar 1}^* \to \lie s_{\bar 0}^* \otimes \lie s_{\bar 1}^*$.

By definition, $\h^1 (\lie s, \C) = \ker (\partial^1) / \im (\partial^0)$, where $\partial^0 \colon \C \to \lie s^*$ is given by $\partial^0 (\lambda) (x) = x \cdot \lambda = 0$ for all $\lambda \in \C, x \in \lie s$, and $\partial^1 \colon \lie s^* \to \Lambda^2 \lie s^*$ is given by $\partial^1 (\varphi)(x \wedge y) = - \varphi ([x,y])$ for all $\varphi \in \lie s^*, x,y \in \lie s$.  So, in order to compute $\h^1 (\lie s, \C)$ it is enough to determine $\ker (\partial^1)$.

From the formula of $\partial^1$, it follows that $\partial^1_{\bar 0} (\varphi) = 0$ if and only if $\varphi ([\lie s_{\bar 0}, \lie s_{\bar 0}] + [\lie s_{\bar 1}, \lie s_{\bar 1}]) = 0$.  Thus $\h^1_{\bar 0} (\lie s, \C) \cong (\lie s_{\bar 0} / [\lie s_{\bar 0}, \lie s_{\bar 0}] + [\lie s_{\bar 1}, \lie s_{\bar 1}])^*$.  Also from the formula of $\partial^1$, one can see that $\ker(\partial^1_{\bar 1})$ is the kernel of $\partial^0$ in the cocomplex for computing the cohomology of the Lie algebra $\lie s_{\bar 0}$ with coefficients in $\lie s_{\bar 1}^*$  (see Section \ref{ss:homology.cohomology}).  Thus $\h^1_{\bar 1} (\lie s, \C) \cong \h^0 (\lie s_{\bar 0}, \lie s_{\bar 1}^*)$.
\end{proof}

The next result follows from the previous one.  It will be used in the proof of Lemma~\ref{lem:h1(ga,M)}.

\begin{lemma} \label{lem:com.even}
Let $\lie g$ be a finite-dimensional simple Lie superalgebra, $A$ be an associative, commutative algebra with unit, $\Gamma$ be an abelian group acting on $\g$ and $A$ by automorphisms, and $I$ be a $\Gamma$-invariant ideal of $A$.  If $\lie g_{\bar 1}$ is nonzero, then
\[
\h_{\bar0}^{1} ( (\lie g \otimes I)^\Gamma, \C ) \cong \left(\left(\lie g_{\bar 0} \otimes I/I^2\right)^\Gamma \right)^*
\quad \textup{and} \quad
\h_{\bar1}^{1} ( (\lie g \otimes I)^\Gamma, \C) \cong \left(\left(\lie g_{\bar 1} \otimes I/I^2\right)^\Gamma\right)^*.
\]
\end{lemma}
\begin{proof}
This proof follows from the fact that $\lie g$ is a simple Lie algebra and Lemma~\ref{lem:h1aC}.
\details{
First notice that $\lie g_{\bar 0}\oplus [\lie g_{\bar 0}, \lie g_{\bar 1}]$ and $[\lie g_{\bar 1}, \lie g_{\bar 1}]\oplus \lie g_{\bar 1}$ are ideals of $\lie g$.  When $\g$ is simple and $\g_{\bar1}$ is nonzero, this implies that $[\g_{\bar0}, \g_{\bar1}] = \g_{\bar1}$ and $[\g_{\bar1}, \g_{\bar1}] = \g_{\bar0}$.  Hence:
\[
\left[ (\lie g_z \otimes I)^\Gamma, (\lie g_{\bar1} \otimes I)^\Gamma \right]
= \left[ \lie g_z \otimes I, \lie g_{\bar1} \otimes I \right]^\Gamma
= \left( [\lie g_z, \lie g_{\bar1}] \otimes I^2 \right)^\Gamma
= (\lie g_{z+\bar1} \otimes I^2)^\Gamma,
\quad z \in \bb Z_2.
\]
The result follows from Lemma~\ref{lem:h1aC}.
}
\end{proof}

The next corollary follows from Lemma~\ref{lem:com.even}.

\begin{corollary} \label{cor:h1(ga,C)=0}
Let $\lie g$ be a finite-dimensional simple Lie superalgebra with $\lie g_{\bar 1} \neq 0$, $A$ be an associative, commutative algebra with unit, and $\Gamma$ be an abelian group acting on $\g$ and $A$ by automorphisms.  If $M$ and $N$ are finite-dimensional trivial $\Gga$-modules, then $\Ext^1_\Gga (M, N) = 0$.  In particular, $\h^1 (\Gga, \C) = 0$.
\end{corollary}
\begin{proof}
This proof follows directly from Lemmas~\ref{lem:isos}, \ref{lem:triv.mods} and \ref{lem:com.even}.
\details{
By Lemma~\ref{lem:isos}, $\Ext^1_\Gga (M, N) \cong \h^1 (\Gga, M^* \otimes N)$.  Since $M$ and $N$ are finite-dimensional trivial $\Gga$-modules, by Lemma~\ref{lem:triv.mods},
\[
\h^1 (\Gga, M^* \otimes N) \cong \h^1 (\Gga, \C) \otimes (M^* \otimes N). 
\]
Since $A$ is an associative, commutative algebra with unit, then $A^2 = A$.  Thus, by Lemma~\ref{lem:com.even}, $\h^1 (\Gga, \C) = 0$.  The result follows.
}
\end{proof}

\section{Equivariant periplectic map Lie superalgebras} \label{sec:periplectic}

\subsection{Structure of periplectic Lie superalgebras}

Given $n\geq 2$, the periplectic Lie superalgebra $\lie p(n)$ is the Lie subalgebra of $\lie{gl}(n+1, n+1)$ whose elements are matrices of the form
\begin{equation}\label{eq:matrixper}
  M=\left(\begin{array}{r|r}
    A & B  \\
    \hline
    C & -A^t \\
  \end{array}\right),
\end{equation}
where $A \in \lie{sl}_{n+1}$, $B=B^t$ and $C^t=-C$.

\begin{center}\textit{
Throughout this section, we will denote $\lie p(n)$ by $\g$.
}\end{center}

The even part, $\g_{\bar0}$, is isomorphic to the Lie algebra $\lie{sl}_{n+1}$.  The structure of $\g_{\bar 1}$ is the following.  Let $S^2 (\C^{n+1})$ (resp. $\Lambda^2 (\C^{n+1})^*$) denote the second symmetric (resp. exterior) power of $\C^{n+1}$ (resp. $(\C^{n+1})^*$), with the action of $\lie{sl}_{n+1}$ induced by matrix multiplication, and let $\g_{\bar 1}^+$ (resp. $\g_{\bar 1}^-$) be the set of all matrices of the form \eqref{eq:matrixper} such that $A=C=0$ (resp. $A=B=0$).  As a $\g_{\bar 0}$-module, we have: $\g_{\bar 1} \cong \g_{\bar 1}^+ \oplus \g_{\bar 1}^-$, $\g_{\bar 1}^+ \cong S^2 (\C^{n+1})$ and $\g_{\bar 1}^- \cong \Lambda^2 (\C^{n+1})^*$.

If we set $\g_{-1} = \g_{\bar 1}^-$, $\g_0 = \g_{\bar 0}$ and $\g_1 = \g_{\bar 1}^+$, then $\g= \g_{-1} \oplus \g_0 \oplus \g_1$ is a $\bb Z$-grading of $\g$ which is compatible with the $\bb Z_2$-grading; that is, $\g_{\bar 0} = \g_0$ and $\g_{\bar 1} = \g_{-1} \oplus \g_1$.  Let $\lie h \subseteq \g_0$ be the Cartan subalgebra of $\g_0$ formed by diagonal matrices.  Since $\g_0$ is isomorphic to $\lie{sl}_{n+1}$, we can identify $\lie h$ with its dual via the bilinear, nondegenerate, $\g_0$-invariant form $(A_1, A_2) = {\rm tr} (A_1 A_2)$.  For $i \in \{ 1, \dotsc, n+1 \}$, let $\varepsilon_i$ be the unique element in $\lie h^*$ satisfying
\[
\varepsilon_i (E_{j,j} + E_{k,k} - E_{n+j+1, n+j+1} - E_{n+k+1, n+k+1}) = \delta_{i,j} + \delta_{i,k}
\qquad \textup{for all $1 \le j \ne k \le n+1$}.
\]
 The roots of $\g$ are described as follows:
\begin{enumerate}[$\bullet$]
\item Roots of $\g_{-1}$: $-\varepsilon_i-\varepsilon_j$, where $1 \leq i < j\le n+1$.

\item Roots of $\g_0$: $\varepsilon_i-\varepsilon_j$, where $1 \le i \ne j \le n+1$.

\item Roots of $\g_1$: $\varepsilon_i+\varepsilon_j$, where $1 \le i \le j \le n+1$.
\end{enumerate}

Choose a triangular decomposition $\lie n_0^-\oplus \lie h\oplus\lie n_0^+$ of the Lie algebra $\g_0$ such that the positive roots are: $\varepsilon_i-\varepsilon_j$, with $i<j$.  We fix a triangular decomposition $\lie n^- \oplus \lie h \oplus \lie n^+$ of $\lie p(n)$, where $\lie n^\pm = \g_{\pm 1} \oplus \lie n_0^\pm$, and denote by $\lie b = \lie h \oplus \lie n^+$ a Borel subalgebra of $\lie p(n)$.  Notice that all the roots of $\g_1$ are positive.

\subsection{Construction of finite-dimensional irreducible modules}

Recall that throughout this section we are denoting $\g = \lie p(n)$, $n \ge 2$.  Let $A$ be an associative commutative algebra with unit.  In this subsection we will construct some finite-dimensional irreducible $\lie p(n) \otimes A$-modules which will be used in the next subsection.

For each $\psi\in (\lie h\otimes A)^*$, denote by $\C_\psi$ the one-dimensional $\lie b \otimes A$-module where the action of $\lie n^+\otimes A$ is trivial and the action of $\lie h\otimes A$ is given by $\psi$.  Define a Verma type module $M(\psi)$ to be
\[
M(\psi) = U(\g\otimes A) \otimes_{U(\lie b \otimes A)} \C_\psi.
\]
Since a submodule of $M(\psi)$ is proper if and only if its intersection with $\C_\psi$ is trivial, $M(\psi)$ admits a unique maximal non-proper submodule. Let $V(\psi)$ denote the unique irreducible quotient of $M(\psi)$ by such a submodule.

\begin{proposition}
Every finite-dimensional irreducible $\ga$-module is isomorphic to $V(\psi)$, for some $\psi \in (\lie h \otimes A)^*$.
\end{proposition}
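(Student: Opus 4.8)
The plan is to mimic the standard highest-weight argument used for finite-dimensional simple Lie algebras and for the simple Lie superalgebra $\g = \lie p(n)$ itself, but carried out over the coefficient ring $A$. Let $M$ be a finite-dimensional irreducible $\ga$-module. First I would produce a highest-weight vector: consider the action of the (abelian) subalgebra $\lie h \otimes A$ on $M$. Since $\lie h \otimes A$ acts by commuting operators on the finite-dimensional space $M$, it has a common generalized eigenspace, and a further standard argument (using that $\lie n^+ \otimes A$ is a nilpotent ideal inside $\lie b \otimes A$ and that $[\lie h \otimes A, \lie n^+\otimes A] \subseteq \lie n^+ \otimes A$, together with the $\bb Z$-grading which makes $\lie n^+\otimes A = (\g_1\oplus\lie n_0^+)\otimes A$ act nilpotently on $M$) shows there is a nonzero $v \in M$ annihilated by $\lie n^+\otimes A$. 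Here I would need to know that $\lie n^+ \otimes A$ acts by nilpotent operators: this follows because $\g_1\otimes A$ is abelian (as $[\g_1,\g_1]=0$ in $\lie p(n)$), $\lie n_0^+\otimes A$ acts nilpotently by the $\g_0 = \lie{sl}_{n+1}$ theory applied over $A$ (weights of $M$ under $\lie h$ are bounded), and the two pieces can be combined. The vector $v$ then generates a one-dimensional $\lie b\otimes A$-submodule $\C_\psi$ for some $\psi\in(\lie h\otimes A)^*$ — one checks $\lie h\otimes A$ acts by a character on $v$ because each $h\otimes a$ acts as a scalar on the (one-dimensional, after restricting) span once $\lie n^+\otimes A v = 0$ and $M$ is $\lie h$-semisimple; more carefully, $v$ can be chosen inside a single $\lie h$-weight space and killed by $\lie n^+\otimes A$, forcing $\lie h\otimes A$ to act by scalars.

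Next I would invoke the universal property of the Verma-type module: the inclusion $\C_\psi \hookrightarrow M$ induces a nonzero $\ga$-module homomorphism $M(\psi) = U(\ga)\otimes_{U(\lie b\otimes A)}\C_\psi \to M$. Since $M$ is generated by $v$ (because $M$ is irreducible and $v\neq 0$), this map is surjective. Its kernel is a proper submodule of $M(\psi)$, hence is contained in the unique maximal submodule of $M(\psi)$ described just before the statement; therefore the map factors through the unique irreducible quotient $V(\psi)$, giving a surjection $V(\psi)\twoheadrightarrow M$. As $V(\psi)$ is irreducible and $M\neq 0$, this surjection is an isomorphism, so $M\cong V(\psi)$. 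The only remaining point is that this particular $\psi$ yields a \emph{finite-dimensional} $V(\psi)$, which is automatic since $V(\psi)\cong M$.

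The main obstacle I anticipate is the existence of the highest-weight vector, i.e.\ showing $\lie n^+\otimes A$ acts locally nilpotently (equivalently, has a nonzero common kernel) on the finite-dimensional module $M$. In the ordinary Lie-algebra case this is clean because $\lie n^+$ is nilpotent; here $\lie n^+\otimes A$ is still nilpotent as an abstract Lie superalgebra only if one is careful — in fact $\lie n^+\otimes A$ \emph{is} nilpotent since $\lie n^+$ is nilpotent in $\lie p(n)$ (the $\bb Z_{\ge 0}$-grading $\lie n^+ = \g_1 \oplus \lie n_0^+$ with $[\g_1,\g_1]=0$ makes it so), and tensoring a nilpotent Lie superalgebra with a commutative algebra keeps it nilpotent. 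Then Engel's theorem for Lie superalgebras (or a direct weight-space/grading argument: the $\lie h$-weights appearing in $M$ form a finite set, and $\g_\alpha\otimes A$ for $\alpha\in\Delta^+$ strictly raises weights within a suitable height function, so sufficiently long products vanish) yields a nonzero joint kernel, and intersecting with a nonzero $\lie h$-weight space of $M$ gives the sought $v$. Once this is in place, everything else is the formal Verma-module argument above; I would present the nilpotency/Engel step carefully and treat the universal-property step as routine.
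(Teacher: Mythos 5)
Your proof is correct and follows essentially the same route as the paper, whose own proof just observes that Savage's highest-weight argument (\cite[Proposition~4.5]{savage14}) goes through once one has a nonzero weight vector, which exists because $\lie h\otimes A$ is abelian --- exactly the mechanism you spell out (maximal-weight vector killed by $\lie n^+\otimes A$, then the universal property of $M(\psi)$). Two phrasings should be tightened: a vector in a top $\lie h$-weight space annihilated by $\lie n^+\otimes A$ is not automatically an $\lie h\otimes A$-eigenvector --- you must pick a common eigenvector of the abelian family $\lie h\otimes A$ inside that ($\lie h\otimes A$-stable) subspace --- and the kernel of $M(\psi)\twoheadrightarrow M$ equals the unique maximal proper submodule because it is itself maximal (the quotient being irreducible), not merely because it is contained in it.
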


\begin{proof}
The proof of \cite[Proposition~4.5]{savage14} only requires the existence of a nonzero weight vector $v \in V$. Since $\lie h \otimes A$ is abelian, such a vector always exists.
\end{proof}

The annihilator $\Ann_A (V)$ of a $\ga$-module $V$ is, by definition, the sum of all ideals $I$ of $A$ such that $(\g \otimes I)V=0$. The support of $V$ is defined to be
\[
\Supp (V) = \Supp (\Ann_A(V)).
\]

\begin{proposition}\label{prop:irred.tensor}
The tensor product of two irreducible finite-dimensional $\ga$-modules with disjoint supports is irreducible as well.
\end{proposition}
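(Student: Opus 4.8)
The plan is to use the standard highest-weight-type argument adapted to the map-superalgebra setting, exactly as is typically done for map algebras and for basic/queer map superalgebras. Let $V$ and $W$ be irreducible finite-dimensional $\ga$-modules with disjoint supports, and set $I = \Ann_A(V)$, $J = \Ann_A(W)$. By definition $\Supp(I) \cap \Supp(J) = \varnothing$, so by Lemma~\ref{lem:assoc-alg-facts}~\eqref{lem-item:product-intersection} we have $I + J = A$. The key structural input is that, since every ideal of $\ga$ is of the form $\g \otimes I'$ (Lemma~\ref{lem:non-trivial-action}) and $V$, $W$ are irreducible, we may regard $V$ as an irreducible module for $\ga/(\g\otimes I) \cong \g \otimes (A/I)$ and similarly $W$ as an irreducible $\g\otimes(A/J)$-module; and because $I+J = A$, the Chinese Remainder Theorem gives $A/(I\cap J) \cong A/I \times A/J$, so $\g \otimes (A/IJ) \cong (\g\otimes A/I) \oplus (\g\otimes A/J)$ acts on $V\otimes W$ through this decomposition. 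Thus $V\otimes W$ is a module for a direct sum of two Lie superalgebras, with $V$ supported on the first summand and $W$ on the second.

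Next I would invoke the results on irreducible products recalled in Section~\ref{ss:lie.sup}: for irreducible finite-dimensional modules $V_1, V_2$ over Lie superalgebras $\g^1, \g^2$, the tensor product $V_1 \otimes V_2$ is reducible only if $\End_{\g^1}(V_1)_{\bar1} \ne 0$ and $\End_{\g^2}(V_2)_{\bar1} \ne 0$ (via \cite[Proposition~8.4]{Che95}). So it suffices to show that at least one of $\End_{\g\otimes A/I}(V)_{\bar1}$ or $\End_{\g\otimes A/J}(W)_{\bar1}$ vanishes. Here is where the periplectic structure from the preceding subsections enters: for $\g = \lie p(n)$ with $n \ge 2$, one shows that an irreducible finite-dimensional $\g\otimes A/I$-module $V$ — being a highest-weight module $V(\psi)$ for some $\psi \in (\lie h \otimes A/I)^*$ — cannot admit an odd endomorphism. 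Indeed, an odd self-intertwiner would have to send a highest-weight vector to a weight vector of the same weight, but in a $\g\otimes A/I$-module of type $V(\psi)$ the highest weight space is one-dimensional, forcing the odd endomorphism to square to $0$ on it rather than to $-1$; combined with Schur's Lemma for Lie superalgebras this gives $\End(V)_{\bar1} = 0$. (Alternatively, since $\g = \g_{-1}\oplus\g_0\oplus\g_1$ is $\bb Z$-graded with $\g_1\otimes A/I$ acting locally nilpotently and raising weights, the argument localizes cleanly to the top graded piece.)

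The main obstacle, and the only genuinely new content relative to the Lie-algebra case, is precisely this last point: verifying $\End_\ga(V)_{\bar1} = 0$ for every finite-dimensional irreducible $\ga$-module when $\g$ is periplectic. This is a statement about the representation theory of $\lie p(n)\otimes A$ itself, and it rests on the Verma-type construction $M(\psi) \twoheadrightarrow V(\psi)$ established just above in the excerpt — the one-dimensionality of the $\psi$-weight space together with the fact that any $\g$-endomorphism preserves weight spaces. Once this is in hand the proposition is immediate: with $\End$ of (at least) one factor odd-free, \cite[Proposition~8.4]{Che95} forces $V\otimes W$ to be irreducible as a $(\g\otimes A/I)\oplus(\g\otimes A/J)$-module, hence as a $\ga$-module. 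I would also double-check the edge case where $V$ or $W$ is supported on the empty set (i.e.\ is trivial as an $\ga$-module), where the claim is trivial since tensoring with an irreducible module by a one-dimensional module preserves irreducibility.
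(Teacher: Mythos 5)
Your proposal is correct and follows essentially the same route as the paper: factor the action of $\ga$ on $V\otimes W$ through $(\g\otimes A/I)\oplus(\g\otimes A/J)$ using comaximality of the annihilators (Chinese Remainder Theorem), use the one-dimensionality of the highest weight space of $V(\psi)$ together with weight preservation to rule out odd endomorphisms, and conclude irreducibility via \cite[Proposition~8.4]{Che95}. The paper phrases the endomorphism step simply as $\End_\ga(V_i)\cong\C$, but the content is the same as your odd-endomorphism argument.
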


\begin{proof}
This proof is similar to that of \cite[Proposition~4.12]{savage14}.

\details{
Assume that $V_1$, $V_2$ are irreducible finite-dimensional $\ga$-modules with disjoint supports, and let $\rho_1$ and $\rho_2$ be the representations associated to $V_1$ and $V_2$, respectively. If $I_1$ and $I_2$ denote their annihilators, then the representation $\rho_1\otimes \rho_2$ associated to the action of $\ga$ on $V_1\otimes V_2$ factors through the composition 
\begin{equation} \label{eq:tensor-product-factor-map}
\ga \hookrightarrow
(\ga) \oplus (\ga) \twoheadrightarrow
(\ga/I_1) \oplus (\ga/I_2),
\end{equation}
where the injective homomorphism on the left is the diagonal map and the surjective homomorphism on the right is induced by the quotient maps. Notice that for $i=1,2$, $V_i$ is an irreducible finite-dimensional $\ga/I_i$-module, and its highest weight space has dimension one. Since homomorphisms of modules preserve weight spaces, we have $\End_\ga (V_1) \cong \End_\ga (V_2) \cong \C$. Therefore, it follows from \cite[Proposition~8.4]{Che95} that $V_1\otimes V_2$ is an irreducible $(\ga/I_1) \oplus (\ga/I_2)$-module.

Now, by the fact that the supports of $I_1$ and $I_2$ are disjoints, we have that $I_1\cap I_2 = I_1 I_2$ and then $A=I_1+I_2$. Therefore $\g\otimes (A/I_1I_2)\cong (\ga/I_1)\oplus (\ga/I_2)$, and we have the following commutative diagram:
\[
\xymatrix@R=6ex@C=4ex{
\ga \ar[r]^-{\Delta} \ar@{->>}[d] & (\ga) \oplus (\ga) \ar@{->>}[d] \\
\ga/I_1I_2 \ar[r]^-{\cong} & (\ga/I_1) \oplus (\ga/I_2).
}
\]
This proves that \eqref{eq:tensor-product-factor-map} is surjective, and therefore our result follows.
\qedhere
}
\end{proof}

This proof of the next result follows ideas contained in \cite[Theorem~4.16]{savage14} (see also \cite[Lemma~2.3]{Bag15} and \cite[Proposition~4.3]{BLL15}).

\begin{proposition}\label{prop:quasi-fin.and.ann}
Let $\psi\in (\lie h\otimes A)^*$. The weight spaces of $V(\psi)$ are finite dimensional if and only if there exists an ideal $I$ of $A$ of finite codimension such that $(\g\otimes I)V(\psi)=0$.
\end{proposition}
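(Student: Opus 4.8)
The plan is to prove both implications, the forward one being the substantial direction. Suppose first that all weight spaces of $V(\psi)$ are finite-dimensional. I would argue that, since $V(\psi)$ is a highest weight module, its weights lie in $\psi - Q^+$, where $Q^+$ is the monoid generated by the (finitely many) positive roots of $\lie p(n)$. The key observation is that for each positive root $\alpha$ (or more precisely, for each root vector $x_\alpha \otimes a$ with $a \in A$), the element $x_\alpha \otimes a$ shifts weights by $\alpha$, and since $V(\psi)$ is finite-dimensional in each weight space but its set of weights is constrained, one can extract a finite-codimensional ideal. Concretely, I would fix a highest weight vector $v_\psi$ and consider, for each root vector $x_{-\varepsilon_i-\varepsilon_j}$ of $\lie g_{-1}$ and each root vector in $\lie n_0^-$, the map $A \to V(\psi)$, $a \mapsto (x \otimes a) v_\psi$; its image lands in a single (finite-dimensional) weight space, so its kernel $I_x$ is a finite-codimensional ideal of $A$. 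Intersecting these finitely many ideals $I_x$ over a spanning set of negative root vectors yields a finite-codimensional ideal $I$ with $(\lie n^- \otimes I)v_\psi = 0$. One then needs to promote this to $(\lie g \otimes I')V(\psi) = 0$ for a possibly smaller finite-codimensional ideal $I'$: since $V(\psi) = U(\lie g \otimes A)v_\psi = U(\lie n^- \otimes A)v_\psi$, and $\lie g \otimes A$ is generated by the root vectors, a standard commutator/PBW argument shows that the annihilator of $V(\psi)$ in $\lie g \otimes A$ is an ideal of $\lie g \otimes A$, hence by Lemma~\ref{lem:non-trivial-action} equals $\lie g \otimes I'$ for some ideal $I'$ of $A$; one checks $I'$ is finite-codimensional using that it contains a power of $I$ (or a related finitely-generated-ideal argument via Lemma~\ref{lem:assoc-alg-facts}).

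The converse is easier. Suppose there is a finite-codimensional ideal $I$ of $A$ with $(\lie g \otimes I)V(\psi) = 0$. Then $V(\psi)$ is a module for the quotient $\lie g \otimes (A/I)$, which is a finite-dimensional Lie superalgebra since $A/I$ is finite-dimensional. A finitely generated highest weight module over a finite-dimensional Lie superalgebra has finite-dimensional weight spaces: the weight $\psi|_{\lie h \otimes (A/I)}$ determines only finitely many dominant-type integral data, and more directly, $V(\psi)$ is spanned by vectors of the form (products of negative root vectors of $\lie g \otimes (A/I)$) applied to $v_\psi$; each such monomial of a fixed weight involves a bounded number of factors from a finite-dimensional space of negative root vectors, so there are only finitely many of them up to reordering, whence each weight space is finite-dimensional. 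I would phrase this cleanly by noting $\lie n^- \otimes (A/I)$ is finite-dimensional nilpotent, $U(\lie n^- \otimes (A/I))$ is $Q^+$-graded with finite-dimensional graded pieces, and $V(\psi)$ is a quotient of $U(\lie n^- \otimes (A/I))$ as a $\lie h$-module.

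The main obstacle I anticipate is the forward direction's final step: passing from ``the annihilator of the highest weight vector contains $\lie n^- \otimes I$ for a finite-codimensional $I$'' to ``the annihilator of the whole module $V(\psi)$ contains $\lie g \otimes I'$ for a finite-codimensional $I'$.'' The subtlety is that $(x \otimes a)v_\psi = 0$ for $x \in \lie n^-$, $a \in I$ does not immediately give $(y \otimes a)(u v_\psi) = 0$ for arbitrary $u \in U(\lie g \otimes A)$; one must commute $y \otimes a$ past $u$, and the commutators $[y \otimes a, z \otimes b] = [y,z] \otimes ab$ stay in $\lie g \otimes I$ since $ab \in I$, so induction on the length of $u$ works, but one has to be careful that $\lie g \otimes A = (\lie n^- \oplus \lie h \oplus \lie n^+) \otimes A$ and handle the $\lie h \otimes A$ and $\lie n^+ \otimes A$ factors (using that $\lie n^+ \otimes A$ kills $v_\psi$ and $\lie h \otimes A$ acts by scalars that may not vanish on $I$). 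The cleanest route is to directly verify that $J := \{a \in A : (\lie g \otimes a)V(\psi) = 0\}$ — equivalently $\Ann_A(V(\psi))$ in the sense already defined before Proposition~\ref{prop:irred.tensor} — is an ideal and to show it is finite-codimensional by exhibiting a finite-codimensional ideal contained in it, namely a suitable power of $I$ intersected with the analogous ideal built from $\lie n_0^- \otimes A$ and then symmetrized; Lemma~\ref{lem:non-trivial-action} and Lemma~\ref{lem:assoc-alg-facts}\eqref{lem-item:Noetherian-power-radical} do the bookkeeping.
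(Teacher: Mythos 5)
Your construction of the finite-codimensional ideal $I$ from the kernels of the maps $a \mapsto (x\otimes a)v_\psi$ (for $x$ ranging over negative root vectors), and your converse direction via the finite-dimensional quotient $\lie g\otimes A/I$ and PBW, coincide with the paper's proof. The genuine gap is in the promotion step of the forward direction, and it is exactly the point you flag but never resolve: before your induction on the length of a PBW monomial (or any annihilator argument) can start, you need the base case $(\lie h\otimes I)v_\psi=0$, i.e.\ that $\psi$ vanishes on $\lie h\otimes I$. You leave this open (``$\lie h\otimes A$ acts by scalars that may not vanish on $I$''), and your fallback --- passing to ``a suitable power of $I$ \dots and then symmetrized'' and invoking Lemma~\ref{lem:non-trivial-action} and Lemma~\ref{lem:assoc-alg-facts} --- does not circumvent it: for any $N$, the scalars $\psi(\lie h\otimes I^N)$ still have to be shown to vanish, and no radical/power bookkeeping accomplishes that. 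The paper's one-line fix is structural: since $\g_{\bar 0}\cong\lie{sl}_{n+1}$ one has $\lie h\subseteq[\lie n^-,\lie n^+]$, hence $\lie h\otimes I\subseteq[\lie n^-\otimes I,\lie n^+\otimes A]$, and therefore $(\lie h\otimes I)v_\psi\subseteq[\lie n^-\otimes I,\lie n^+\otimes A]v_\psi=0$ because $\lie n^-\otimes I$ and $\lie n^+\otimes A$ both kill $v_\psi$.

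Once $(\g\otimes I)v_\psi=0$ is established, the passage to the whole module is immediate and requires no smaller ideal $I'$ and no power of $I$: since $I$ is an ideal of $A$, $\g\otimes I$ is an ideal of $\ga$, so $W=\{w\in V(\psi): (\g\otimes I)w=0\}$ is a submodule of $V(\psi)$; it is nonzero (it contains $v_\psi$) and $V(\psi)$ is irreducible, whence $W=V(\psi)$ and $(\g\otimes I)V(\psi)=0$ with the very same $I$. Your induction on the length of $u\in U(\lie g\otimes A)$, using $[y\otimes a,z\otimes b]=[y,z]\otimes ab\in\g\otimes I$, would also complete the argument once the base case is in place; so the single missing ingredient is the vanishing of $\psi$ on $\lie h\otimes I$ via the commutator identity above, not any of the machinery you propose to import.
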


\begin{proof}
Suppose all the weight spaces of $V(\psi)$ are finite dimensional and let $v$ be a highest weight vector of $V(\psi)$. Let $\Delta^-$ denote the set $\{ -\varepsilon_i \pm \varepsilon_j \mid 1 \le i < j \le n \}$.  For each $\alpha \in \Delta^-$, define $I_\alpha$ to be $\{ a \in A \mid (\g_{\alpha}\otimes a)v=0\}$.  Since each weight space of $V(\psi)$ is finite dimensional, $I_\alpha$ is an ideal of $A$ of finite codimension. Let $I$ be $\bigcap_{\alpha \in \Delta^-} I_\alpha$.  Since $\g$ is finite dimensional, $I$ is an intersection of finitely many finite-codimensional ideals.  In particular, $I$ is also a finite-codimensional ideal of $A$.  We claim that $(\g\otimes I)V(\psi)=0$.  Indeed, the fact that $(\lie n^+\otimes A)v=0$ follows from the fact that $v$ is a highest weight vector.  The fact that $(\lie n^-\otimes I)v=0$ follows from the construction of $I$.  Finally, notice that $\lie h\subseteq [\lie n^-,\lie n^+]$, and so $(\lie h\otimes I)v\subseteq [\lie n^-\otimes I,\lie n^+\otimes A]v=0$.  Thus $(\g\otimes I)v=0$ and hence $W=\{w\in V(\psi) \mid (\g\otimes I)v=0\}$ is a non-trivial submodule of $V(\psi)$.  Since $V(\psi)$ is irreducible, we conclude that $W=V(\psi)$.

Suppose now there exists a finite-codimensional ideal $I$ of $A$ such that $(\g\otimes I)V(\psi)$.  Then the action of $\ga$ on $V(\psi)$ factors through an action of the finite-dimensional Lie superalgebra $\ga/I$.  Thus, by standards arguments using the PBW Theorem, all the weight spaces of $V(\psi)$ are finite dimensional.
\end{proof}

\begin{center}\textit{
For the remainder of this section, we will assume that $A$ is finitely generated.
}\end{center}

\begin{proposition}
Let $\psi\in (\lie h\otimes A)^*$.  The support of $V(\psi)$ is finite if and only if there exists a finite-codimensional ideal $I$ of $A$ such that $(\g\otimes I)V(\psi)=0$.
\end{proposition}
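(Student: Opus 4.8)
The plan is to reduce the statement to Lemma~\ref{lem:assoc-alg-facts}~\eqref{lem-item:finiteCod-and-finiteSupp}, which (since $A$ is assumed finitely generated) tells us that an ideal of $A$ has finite support if and only if it has finite codimension. The object that bridges the two sides of the claimed equivalence is the ideal $\Ann_A(V(\psi))$ itself, so the first thing I would record is that $(\g \otimes \Ann_A(V(\psi)))\,V(\psi) = 0$. This is immediate from the definition: $\Ann_A(V(\psi))$ is the sum of all ideals $I$ of $A$ with $(\g \otimes I)V(\psi) = 0$, and any element of such a sum lies in $I_1 + \dotsb + I_k$ for finitely many of them, while $(\g \otimes (I_1 + \dotsb + I_k))V(\psi) = \sum_j (\g \otimes I_j)V(\psi) = 0$. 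Thus $\Ann_A(V(\psi))$ is the largest ideal of $A$ annihilating $V(\psi)$ in this sense, and in particular $\Supp(V(\psi)) = \Supp(\Ann_A(V(\psi)))$ by definition of the support of $V(\psi)$.

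For the ``if'' direction, I would take a finite-codimensional ideal $I$ of $A$ with $(\g \otimes I)V(\psi) = 0$; then $I \subseteq \Ann_A(V(\psi))$, whence $\Supp(V(\psi)) = \Supp(\Ann_A(V(\psi))) \subseteq \Supp(I)$, and $\Supp(I)$ is finite by Lemma~\ref{lem:assoc-alg-facts}~\eqref{lem-item:finiteCod-and-finiteSupp}. For the ``only if'' direction, I would simply set $I = \Ann_A(V(\psi))$: by the opening observation $(\g \otimes I)V(\psi) = 0$, and $\Supp(I) = \Supp(V(\psi))$ is finite by hypothesis, so Lemma~\ref{lem:assoc-alg-facts}~\eqref{lem-item:finiteCod-and-finiteSupp} again gives that $I$ has finite codimension in $A$.

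The proof is essentially bookkeeping around the definition of $\Supp(V(\psi))$; the only point that needs a moment of care is the closure statement that the \emph{full} annihilator ideal acts by zero, since this is exactly what allows a direct appeal to Lemma~\ref{lem:assoc-alg-facts}~\eqref{lem-item:finiteCod-and-finiteSupp} rather than a detour through Proposition~\ref{prop:quasi-fin.and.ann}. (One could alternatively combine Proposition~\ref{prop:quasi-fin.and.ann} with the observation that $V(\psi)$ has finite support if and only if it has finite-dimensional weight spaces, but the route above is shorter and uses the hypothesis that $A$ is finitely generated only through the single cited lemma.)
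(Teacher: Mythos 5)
Your proposal is correct and follows essentially the same route as the paper: both arguments hinge on $\Supp(V(\psi)) = \Supp(\Ann_A(V(\psi)))$ together with Lemma~\ref{lem:assoc-alg-facts}~\eqref{lem-item:finiteCod-and-finiteSupp}, with the paper leaving implicit the closure fact (that the full annihilator ideal itself kills $V(\psi)$) which you spell out. Your write-up is just a slightly more explicit version of the paper's two-line proof.
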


\begin{proof}
Recall that $\Supp V(\psi) = \Supp \Ann_A(V(\psi))$.  So $\Ann_A(V(\psi))$ has finite codimension if and only if there exists a finite-codimensional ideal $I$ of $A$ such that $(\g\otimes I)V(\psi)=0$.  Since $A$ is finitely generated, $\Ann_A(V(\psi))$ has finite codimension if and only if its support is finite.
\end{proof}

Before stating the next result, recall that $\g_{\bar 0}$ is a finite-dimensional simple Lie algebra.

\begin{proposition}\label{prop:radical.ann}
If $V$ is an irreducible finite-dimensional $\ga$-module, then $(\g\otimes J)V=0$ for some radical ideal $J$ of $A$ of finite codimension.
\end{proposition}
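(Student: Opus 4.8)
My plan is to prove the equivalent statement that $I:=\Ann_A(V)$ is a radical ideal. Put $J=\rad I$; by Proposition~\ref{prop:ann.ideals.J(M)} together with Lemma~\ref{lem:assoc-alg-facts}\eqref{lem-item:Noetherian-power-radical} we have $J=\sfm_1\cdots\sfm_\ell$ for the finitely many maximal ideals $\sfm_i$ of $A$ containing $I$, so $J$ is radical and of finite codimension, and the claim becomes $(\g\otimes J)V=0$. Since $\sfm_1^n\cdots\sfm_\ell^n\subseteq I$ for some $n$, the action of $\ga$ on $V$ factors through $\g\otimes(A/\sfm_1^n\cdots\sfm_\ell^n)\cong\bigoplus_i\g\otimes(A/\sfm_i^n)$ by the Chinese Remainder Theorem (Lemma~\ref{lem:assoc-alg-facts}\eqref{lem-item:product-intersection}), and $\g\otimes J$ acts through $\bigoplus_i\g\otimes(\sfm_i/\sfm_i^n)$. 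For each $i$ the set $\{w\in V:(\g\otimes\sfm_i/\sfm_i^n)w=0\}$ is a submodule, and it is nonzero — hence equal to $V$ — as soon as the following local statement is known: \emph{if $R$ is a local Artinian $\C$-algebra with maximal ideal $\sfr$ (so $R/\sfr=\C$) and $W$ is a finite-dimensional irreducible $\g\otimes R$-module, then $(\g\otimes\sfr)W=0$.} So the proposition reduces to this.

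For the local statement I may assume $W$ nontrivial and write $W=V(\psi)$ with $\psi\in(\lie h\otimes R)^*$ and highest weight vector $v$ of weight $\mu:=\psi|_{\lie h}$; since $M(\psi)\cong U(\lie n^-\otimes R)\otimes\C_\psi$ as a vector space and all roots of $\lie n^-$ are negative, the weight-$\mu$ space of $W$ is one-dimensional. I would first record two facts. (i) $\psi(\lie h\otimes\sfr)=0$: for each simple root of $\g_0=\lie{sl}_{n+1}$ the corresponding $\lie{sl}_2$-subalgebra $\lie s=\langle e,f\rangle$, $h=[e,f]$, makes $W$ a finite-dimensional $\lie s\otimes R$-module; as $\g_0$ is semisimple one has $\lie s\otimes\sfr=[\lie s\otimes R,\rad(\lie s\otimes R)]$, so $\lie s\otimes\sfr$ annihilates every finite-dimensional irreducible $\lie s\otimes R$-module and hence acts nilpotently on $W$; since $h\otimes\sfr$ preserves the one-dimensional weight space $\C v$ it kills it, and the various $h$ span $\lie h$. (This is where $\g_{\bar 0}$ being semisimple, rather than reductive with a center, is used; the analogue fails for the basic classical superalgebras of type I.) (ii) The $\bb Z$-grading $\g=\g_{-1}\oplus\g_0\oplus\g_1$ induces one on $M(\psi)$, placing $\C_\psi$ in degree $0$ (note $\lie b\otimes R$ is graded and $\g_1\otimes R\subseteq\lie b\otimes R$ annihilates $\C_\psi$); its unique maximal submodule is stable under the $\C^\times$-family of grading automorphisms of $\g\otimes R$, which fix $\lie h$ and hence $\psi$, so it is graded, and therefore $W=\bigoplus_{d\le 0}W_d$ with $W_0\neq 0$ and $v\in W_0$.

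Now I would show $(\g_0\otimes\sfr)v=0$, and then $(\g_{-1}\otimes\sfr)v=0$, each by downward induction on the root order. For the first: given a negative root $\nu$ of $\g_0$, assume $(\g_{0,\nu'}\otimes\sfr)v=0$ for all negative roots $\nu'>\nu$, pick $0\neq x\in\g_{0,\nu}$ and $a\in\sfr$, and set $w=(x\otimes a)v\in W_0$. Since $[\g_1,\g_0]\subseteq\g_1\subseteq\lie n^+$ one gets $(\g_1\otimes R)w=0$; and for $\xi\in\lie n_0^+$ of root $\gamma$ one computes $(\xi\otimes b)w=\pm([\xi,x]\otimes ab)v$, which vanishes according to whether $\gamma+\nu$ is a positive root of $\g_0$ (highest weight property), $0$ (fact (i), as $ab\in\sfr$), a negative root of $\g_0$ (inductive hypothesis), or not a root; hence $(\lie n^+\otimes R)w=0$. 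If $w\neq 0$ then $U(\g\otimes R)w=W$ by irreducibility, and since $w$ is annihilated by $\g_1\otimes R$, PBW gives $U(\g\otimes R)w=U(\g_{-1}\otimes R)\,U(\g_0\otimes R)w$, whose degree-$0$ part is $U(\g_0\otimes R)w$; thus $W_0=U(\g_0\otimes R)w$ would have all $\g_0$-weights $\le\mu+\nu$, contradicting $v\in W_0$ of weight $\mu$ since $\nu$ is a negative root of $\lie{sl}_{n+1}$. So $w=0$, and together with fact (i) and $(\lie n_0^+\otimes R)v=0$ this yields $(\g_0\otimes\sfr)v=0$. The second induction, now over the weights of the $\g_0$-module $\g_{-1}$, is entirely parallel: for $x\in\g_{-1,\nu}$, $a\in\sfr$, $w=(x\otimes a)v\in W_{-1}$, the vanishing $(\lie n^+\otimes R)w=0$ follows from the inductive hypothesis (for $\lie n_0^+$, which sends $\g_{-1,\nu}$ into higher weight spaces of $\g_{-1}$) together with $(\g_0\otimes\sfr)v=0$ (for $\g_1$, since $[\g_1,\g_{-1}]\subseteq\g_0$), and as $w$ lies in degree $-1$ the module $U(\g\otimes R)w=U(\g_{-1}\otimes R)U(\g_0\otimes R)w$ lies in degrees $\le -1$, so $w\neq 0$ would contradict $W_0\neq 0$. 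Finally $(\g\otimes\sfr)v=0$, because $(\g_{-1}\otimes\sfr)v=(\g_0\otimes\sfr)v=0$ and $\g_1\subseteq\lie n^+$; hence $\{w\in W:(\g\otimes\sfr)w=0\}$ is a nonzero submodule of $W$, so equals $W$, which proves the local statement and the proposition. The main obstacle is the bookkeeping in the two inductions — verifying that each $(x\otimes a)v$ is again a highest weight vector and tracking its $\bb Z$-degree; the inputs that make it run, besides fact (i), are the grading relations $[\g_1,\g_0]\subseteq\g_1$, $[\g_1,\g_{-1}]\subseteq\g_0$ and the positivity of all roots of $\g_1$, i.e., precisely the structure special to $\lie p(n)$.
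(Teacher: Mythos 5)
Your argument is correct, but it follows a genuinely different route from the paper's. The paper takes the finite-codimensional ideal $I$ with $(\g\otimes I)V=0$ (from Proposition~\ref{prop:quasi-fin.and.ann}), sets $J=\rad I$, observes that $\g\otimes(J/I)$ is a solvable Lie superalgebra whose odd-odd brackets land in $[(\g\otimes J/I)_{\bar 0},(\g\otimes J/I)_{\bar 0}]$ because $\g_{\bar 0}$ is simple, and then invokes Kac's result \cite[Proposition~5.2.4]{kac77} that every finite-dimensional irreducible representation of such a solvable superalgebra is one-dimensional; this produces a common eigenvector for $\g\otimes J$, whose eigencharacter is then killed using nilpotency on $\lie n^{\pm}\otimes J$ and simplicity of $\g_{\bar 0}$, and one concludes exactly as you do, via the nonzero submodule annihilated by $\g\otimes J$. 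You instead first localize by the Chinese Remainder Theorem to $\g\otimes R$ with $R$ Artinian local, and then run a hands-on highest-weight argument: vanishing of $\psi$ on $\lie h\otimes\lie r$ via the classical fact that $[\lie l,\rad\lie l]$ acts nilpotently (applied to $\lie{sl}_2\otimes R$-subalgebras), a $\bb Z$-grading of $V(\psi)$ obtained from the consistent grading $\g_{-1}\oplus\g_0\oplus\g_1$ of $\lie p(n)$ through the $\C^\times$-family of grading automorphisms, and two downward inductions on weights showing the highest weight vector is killed by $\g_0\otimes\lie r$ and $\g_{-1}\otimes\lie r$. What the paper's route buys is brevity and a clean black box (Kac's solvability theorem), at the cost of quoting that result; your route is more elementary and makes explicit which structural features of $\lie p(n)$ are responsible (simple even part, $\g_1\subseteq\lie n^+$, the type-I-like grading), at the cost of length and of being tailored to $\lie p(n)$. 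Two small points you compress and should spell out: in the reduction, the nonvanishing of $\{w\in V:(\g\otimes\sfm_i/\sfm_i^n)w=0\}$ comes from applying your local statement to a minimal (hence irreducible) $\g\otimes A/\sfm_i^n$-submodule of $V$; and in the first induction, the bound ``all weights of $U(\g_0\otimes R)w$ are $\le\mu+\nu$'' uses the already established $(\lie n_0^+\otimes R)w=0$ together with the PBW factorization of $U(\g_0\otimes R)$ and the fact that $\lie h\otimes R$ preserves $\lie h$-weight spaces.
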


\begin{proof}
Since $V$ is irreducible and has finite dimension, Proposition~\ref{prop:quasi-fin.and.ann} implies that $(\g\otimes I)V=0$ for some finite-codimensional ideal $I$ of $A$.  Let $J=\sqrt I$ be the radical of $I$.  We will show that $(\g\otimes J)V=0$.  Since we are assuming that $A$ is finitely generated (and in particular, Noetherian), there exists some power of $J$ that is contained in $I$.  Then $\g\otimes (I/J)$ is a solvable Lie superalgebra satisfying the following property:
\[
\left[ (\g\otimes (J/I))_{\bar 1},(\g\otimes (J/I))_{\bar 1}\right]
= \left[ \g_{\bar 1}, \g_{\bar 1} \right] \otimes (J^2/I)
\subseteq \g_{\bar 0}\otimes (J^2/I)
= \left[ (\g\otimes (J/I))_{\bar 0}, (\g\otimes (J/I))_{\bar 0} \right],
\]
where the last equality follows from the fact that $\g_{\bar 0}$ is a simple Lie algebra. Therefore, it follows from \cite[Proposition~5.2.4]{kac77}, that any irreducible finite-dimensional representation of $\g\otimes J$ is one-dimensional. Then there exists a nonzero vector $w\in V$, and an element $\mu \in (\g\otimes J)^*$, such that $xw=\mu(x)w$ for all $x\in \g\otimes J$. We claim that $\mu=0$. Since $V$ is finite dimensional, for any $z\in \lie n^\pm\otimes J$, there exists $m\geq 0$ such that $z^mw=\mu(z)^m w=0$. In other words, $\mu(\lie n^\pm\otimes J)=0$, and hence $(\lie n^\pm\otimes J)w=0$. Let $\mu'$ denote the restriction of $\mu$ to $\g_{\bar 0}\otimes J$. Since $\g_{\bar 0}$ is a simple Lie algebra, it follows that the kernel of $\mu'$ must be $\g_{\bar 0}\otimes J$. In particular, $\mu'(\lie h \otimes J)=0$, since $\lie h\subseteq \g_{\bar 0}$. We have thus proved that $(\g\otimes J)w=0$. Now the result follows from the irreducibility of $V$ along with the fact that $W=\{v\in V \mid (\g\otimes J)v=0\}$ is a nonzero submodule of $V$.
\end{proof}

\subsection{Classification of finite-dimensional irreducible modules} \label{ss:class.pn}

Recall that we are assuming that $\g = \lie p (n)$, $n \ge 2$, and that $A$ is an associative, commutative, finitely-generated algebra with unit.  In this subsection we will classify all the finite-dimensional irreducible $\Gga$-modules in the case where $\Gamma$ is a finite abelian group acting on $\g$ and $A$ by automorphisms and such that the induced action of $\Gamma$ on $\maxspec (A)$ is free.

Recall from Section~\ref{ss:map.superalgebras} that for every $\sfm \in \maxspec(A)$ one may consider the composition
\[
{\rm ev}_{\sfm} 
\colon \ga 
\to \ga / \lie g \otimes \sfm
\iso \g.
\]
Furthermore for a $\g$-module $V$ with associated representation $\rho \colon \g \to \lie{gl}(V)$, the module $\ev{\sfm} (V)$ is defined to be the $\ga$-module with associated representation given by the pull-back of $\rho$ along ${\rm ev}_{\sfm}$:
\[
\ev{\sfm}(\rho) \colon
\ga \pra[{\rm ev}_{\sfm}]
\g \pra[\ \rho \ ]
\lie{gl} (V).
\]
The $\ga$-module $\ev{\sfm} (V)$ is called an evaluation module and its associated representation $\ev{\sfm}(\rho)$ is called an evaluation representation.  Define $\Gev{\sfm}(\rho)$ to be the restriction of $\ev{\sfm}(\rho)$ to $\Gga$.

\begin{theorem}\label{thm:irred.classification}
Every irreducible finite-dimensional $\g\otimes A$-module  $V$ is isomorphic to a tensor product of evaluation modules.
\end{theorem}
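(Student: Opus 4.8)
The plan is to reduce to the local situation using the structural results just established, and then invoke the classification for a single truncated map algebra together with the factorization through a finite quotient. First I would apply Proposition~\ref{prop:radical.ann}: since $V$ is irreducible and finite-dimensional, there is a finite-codimensional \emph{radical} ideal $J$ of $A$ with $(\g \otimes J) V = 0$, so the action of $\ga$ on $V$ factors through $\ga/(\g\otimes J) \cong \g \otimes (A/J)$. By Lemma~\ref{lem:assoc-alg-facts}\eqref{lem-item:finiteCod-and-finiteSupp} and the fact that $J$ is radical, $J$ has finite support, say $\Supp(J) = \{\sfm_1, \dots, \sfm_\ell\}$, and Lemma~\ref{lem:assoc-alg-facts}\eqref{lem-item:Noetherian-power-radical} gives $J = \rad J = \sfm_1 \cdots \sfm_\ell = \sfm_1 \cap \dots \cap \sfm_\ell$ (the $\sfm_i$ being pairwise distinct maximal ideals). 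By the Chinese Remainder Theorem, $A/J \cong \bigoplus_{i=1}^\ell A/\sfm_i \cong \C^\ell$, hence $\g \otimes (A/J) \cong \bigoplus_{i=1}^\ell \g$, with the $i$-th summand acting through $\ev{\sfm_i}$.

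Next I would use the irreducibility of $V$ as a module over this direct sum of copies of $\g$. A finite-dimensional irreducible module over a direct sum $\g^1 \oplus \dots \oplus \g^\ell$ of Lie superalgebras need not be an outer tensor product in the super setting, but this is exactly what the irreducible product $\htimes$ from Section~\ref{ss:lie.sup} handles: by the discussion there (based on \cite[Proposition~8.4]{Che95} and \cite[p.~27]{Che95}), every finite-dimensional irreducible $\bigl(\bigoplus_i \g\bigr)$-module is of the form $V_1 \htimes \dots \htimes V_\ell$ for finite-dimensional irreducible $\g$-modules $V_i$. Pulling back along $\ga \to \g \otimes (A/J) \cong \bigoplus_i \g$, where the composite $\ga \to \bigoplus_i \g$ is the diagonal-type map $x \mapsto ({\rm ev}_{\sfm_i}(x))_i$, we get $V \cong \ev{\sfm_1}(V_1) \htimes \dots \htimes \ev{\sfm_\ell}(V_\ell)$ as $\ga$-modules. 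Since $\g = \lie p(n)$ is not of type $\lie q$ (it is strange but of periplectic type), the remark at the end of the $\htimes$ discussion in Section~\ref{ss:lie.sup} — that for simple Lie superalgebras not of type $\lie q$ the irreducible product equals the ordinary tensor product — gives $V \cong \ev{\sfm_1}(V_1) \otimes \dots \otimes \ev{\sfm_\ell}(V_\ell)$, a genuine tensor product of evaluation modules, as claimed. (One should note the $\sfm_i$ are automatically in distinct $\Gamma$-orbits is not needed here since $\Gamma$ is trivial at this stage of the argument; the equivariant refinement comes later.)

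I expect the main obstacle to be the passage from "irreducible over $\bigoplus_i \g$" to "tensor product of evaluation modules" done cleanly — specifically, verifying that the map $\ga \to \bigoplus_i \g$ is surjective so that irreducibility over the quotient transfers correctly, and checking that $\End_\g(V_i) \cong \C$ (equivalently $\End_\g(V_i)_{\bar 1} = 0$) holds for the periplectic irreducibles so that $\htimes = \otimes$ applies. Surjectivity follows from CRT as above; the endomorphism claim should follow from the highest-weight structure of $V(\psi)$ (one-dimensional highest weight space, weight spaces preserved by endomorphisms, as used in the proof of Proposition~\ref{prop:irred.tensor}), or alternatively one simply invokes that $\lie p(n)$ is not of type $\lie q$ and cites the general statement from Section~\ref{ss:lie.sup}. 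Apart from that, the argument is a routine assembly of the pieces already in place.
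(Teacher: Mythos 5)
Your proposal is correct and follows essentially the same route as the paper: reduce via Proposition~\ref{prop:radical.ann} and the Chinese Remainder Theorem to an irreducible module over $\g^{\oplus \ell}$, then use \cite[Proposition~8.4]{Che95} together with the fact that $\End_\g(V_i)\cong\C$ (the paper packages this as Proposition~\ref{prop:irred.tensor}, you invoke the $\htimes=\otimes$ remark for non-$\lie q$ type, which rests on the same Schur-lemma observation) to conclude that the module is a genuine tensor product of evaluation modules.
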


\begin{proof}
Since $V$ is finite dimensional, by Proposition~\ref{prop:radical.ann}, there exists a radical ideal $I$ of $A$ of finite codimension such that $(\g\otimes I)V=0$. Since $A$ is finitely generated and $I$ has finite codimension, Lemma~\ref{lem:assoc-alg-facts}~\eqref{lem-item:finiteCod-and-finiteSupp} implies that the support of $I$ is finite. If $\Supp (I) = \{\sfm_1,\ldots, \sfm_n\} \subseteq \maxspec(A)$, then $I=\sqrt{I}=\sfm_1\cdots \sfm_n$. Therefore the action of $\g\otimes A$ on $V$ factors through the map
\begin{equation} \label{eq:quotient}
\g\otimes A
\twoheadrightarrow \g\otimes A/I
\iso \bigoplus_{i=1}^n \g\otimes A/\sfm_i
\iso \g^{\oplus n}.
\end{equation}
In other words, $V$ is isomorphic to the pull-back of an irreducible finite-dimensional $\g^{\oplus n}$-module $W$ along \eqref{eq:quotient}.  Notice that $W$ is also an irreducible finite-dimensional module for $U\left( \g^{\oplus n}\right) \cong U (\g)^{\otimes n}$.  By \cite[Proposition~8.4]{Che95}, there exist irreducible finite-dimensional modules $V_1, \dotsc, V_n$ for $U(\g)$ such that $W$ is either isomorphic to $V_1 \otimes \dotsm \otimes V_n$ or to a proper submodule of $V_1\otimes \dotsm \otimes V_n$.  Since $V_1 \otimes \dotsm \otimes V_n$ is irreducible by Proposition~\ref{prop:irred.tensor}, $W \cong V_1 \otimes \dotsm \otimes V_n$, and $V$ is isomorphic to a tensor product of evaluation modules.
\end{proof}

\begin{center}\textit{
\begin{minipage}{.82\textwidth}
From now on we will assume that $\Gamma$ is a finite abelian group acting on $\g$ and $A$ by automorphisms and such that the induced action of $\Gamma$ on $\maxspec (A)$ is free.
\end{minipage}
}
\end{center}

Let $\irred(\g)$ (resp. $\irred\Gga$) be the set of isomorphism classes of irreducible finite-di\-men\-sion\-al modules for $\g$ (resp. $\Gga$).  Let $[V] \in \irred (\g)$ denote the isomorphism class of a $\g$-module $V$.  Notice that, if $V$ and $V'$ are isomorphic \g-modules, then $\Gev{\sfm}(V)$ and $\Gev{\sfm} (V')$ are isomorphic \Gga-modules.  Therefore, for each $[V] \in \irred(\g)$, we define $\Gev{\sfm}[V]$ to be the isomorphism class of $\Gev{\sfm}(V)$ in $\irred\Gga$.

Also recall that the action of $\Gamma$ on $\g$ induces an action of $\Gamma$ on $\irred(\g)$.  Namely, if $V$ is a \g-module representative of $[V] \in \irred(\g)$ with associated representation $\rho \colon \g \to \lie{gl} (V)$, then $\gamma [V] = [V^\gamma]$, where $V^\gamma$ is a \g-module with underlying vector space $V$ and associated representation $\rho' \colon \g \to \lie{gl}(V)$ given by $\rho'(x) = \rho (\gamma^{-1} x)$ for all $x \in \g$.

Let $\cal P$ be the set of $\Gamma$-equivariant functions $\pi \colon \maxspec(A) \to \irred(\g)$ such that $\pi(\sfm)=[\C]$ for all but finitely many distinct $\sfm \in \maxspec(A)$.  Given $\pi \in \cal P$, recall that its support is defined to be $\Supp(\pi) = \{ \sfm \in \maxspec(A) \mid \pi(\sfm) \textup{ is nontrivial} \}$.  Let $X_*$ be denote the set of all finite subsets $\sfM \subseteq \maxspec(A)$ satisfying the following property: if $\sfm$ and $\sfm'$ are distinct elements in $\sfM$, then $\sfm\notin\Gamma\sfm'$.  As in Section~\ref{ss:map.superalgebras}, for each $\pi \in \cal P$, fix an element $\Supp_ *(\pi)$ in $X_*$ containing one element of each $\Gamma$-orbit in $\Supp(\pi)$, and define $\cal V(\pi)$ to be the $\Gga$-module
\[
\cal V(\pi) =
\bigotimes_{\sfm \in \Supp_*(\pi)} \Gev{\sfm}\pi(\sfm).
\]

\begin{lemma}\label{lem:properties_map}
With the above notation, the following hold:
\begin{enumerate}[(a)]
\item \label{lem:ev.well.def}
For every $\pi\in \cal P$, the isomorphism class of $\cal V(\pi)$ does not depend on the choice of $\Supp_*(\pi)$.

\item \label{lem:ev.irred}
For every $\pi\in \cal P$, the $\Gga$-module $\cal V(\pi)$ is irreducible.

\item \label{lem:properties_map.injection}
The map $\cal P \to \irred \Gga$ given by $\pi \mapsto \cal V(\pi)$ is injective.
\end{enumerate}
\end{lemma}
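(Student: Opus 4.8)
The plan is to prove the three parts in order, leaning on the structure theory from Section~\ref{sec:periplectic} and the general facts about irreducible products recalled in Section~\ref{ss:lie.sup}. For part~\eqref{lem:ev.well.def}, the key observation is that the definition of $\cal V(\pi)$ as $\bigotimes_{\sfm \in \Supp_*(\pi)} \Gev{\sfm}\pi(\sfm)$ only involves the orbit representatives, and the $\Gamma$-equivariance of $\pi$ forces $\pi(\gamma\sfm) = \gamma\cdot\pi(\sfm)$. So if $\Supp_*(\pi)$ and $\Supp_*'(\pi)$ are two admissible choices, one gets a bijection between them matching $\sfm$ with some $\gamma_\sfm \sfm$; I would then exhibit an explicit isomorphism $\Gev{\sfm}\pi(\sfm) \to \Gev{\gamma_\sfm\sfm}\pi(\gamma_\sfm\sfm)$ of $\Gga$-modules built from the automorphism $\gamma_\sfm$ (this is exactly the argument behind \cite[Lemma~5.9]{savage14}, and I would cite it after reducing to that statement), and take the tensor product of these isomorphisms.

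For part~\eqref{lem:ev.irred}, I would first reduce to the case $\Gamma$ trivial by using surjectivity of ${\rm ev}_{\sfm}^\Gamma$ onto $\g$ (which holds because $\Gamma$ acts freely on $\maxspec(A)$, cf.\ \cite[Lemma~5.6]{savage14}); more precisely, the $\Gga$-module structure on $\cal V(\pi)$ factors through $\bigoplus_{\sfm\in\Supp_*(\pi)}\g$ via $\prod{\rm ev}^\Gamma_{\sfm}$, and this composite map is surjective. Then $\cal V(\pi)$, as a module for $\bigoplus_\sfm \g$, is the outer tensor product $\boxtimes_\sfm V_\sfm$ of irreducible $\g$-modules, where $V_\sfm$ is a representative of $\pi(\sfm)$. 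By \cite[Proposition~8.4]{Che95}, $\boxtimes_\sfm V_\sfm$ is irreducible unless $\End_\g(V_\sfm)_{\bar1}\neq 0$ for at least two indices; but here $\g=\lie p(n)$ is not of type $\lie q$, and since each $V_\sfm$ is finite-dimensional irreducible its endomorphism superalgebra is $\C$ concentrated in degree $\bar0$ (by Schur's Lemma for Lie superalgebras together with the fact that a periplectic irreducible $\g$-module admits a one-dimensional highest-weight space and homomorphisms preserve weight spaces, exactly as in the proof of Proposition~\ref{prop:irred.tensor}). Hence $\boxtimes_\sfm V_\sfm$ is irreducible as a $\bigoplus_\sfm\g$-module, and pulling back along a surjection preserves irreducibility, so $\cal V(\pi)$ is an irreducible $\Gga$-module.

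For part~\eqref{lem:properties_map.injection}, suppose $\cal V(\pi)\cong\cal V(\pi')$ as $\Gga$-modules; I must show $\pi=\pi'$. The idea is to recover $\pi$ from $\cal V(\pi)$ via annihilators and support. Using Theorem~\ref{thm:irred.classification} and Proposition~\ref{prop:radical.ann} at the level of the $\ga$-module obtained by restricting along $\Gga\hookrightarrow\ga$ (or rather, after enlarging the finite set of maximal ideals to be $\Gamma$-stable, as in Proposition~\ref{prop:ann.fin.dim}), the support $\bigcup_{\sfm\in\Supp_*(\pi)}\Gamma\sfm$ is an isomorphism invariant, which pins down $\Supp(\pi)$; then for each $\sfm$ in the support, localizing/evaluating recovers the $\g$-module $\pi(\sfm)$ up to isomorphism as the "component at $\sfm$", using that distinct maximal ideals in distinct orbits give $A = \sfm_i + \sfm_j$ and hence a direct-sum decomposition of the relevant quotient (Lemma~\ref{lem:assoc-alg-facts}\eqref{lem-item:product-intersection}). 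The main obstacle, and the step I would spend the most care on, is this last injectivity argument: one has to make precise how to extract the individual evaluation factors $\pi(\sfm)$ from the tensor-product module $\cal V(\pi)$ in a way that is manifestly isomorphism-invariant — the cleanest route is to pass to the restriction as an $\ga$-module, apply the already-established classification facts for $\ga$-modules to identify the set of maximal ideals and the $\g$-module sitting at each, and then note that $\Gamma$-equivariance lets one reconstruct $\pi$ on all of $\maxspec(A)$ from its values on orbit representatives.
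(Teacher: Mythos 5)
Your proposal is correct and takes essentially the same route as the paper: part (a) is the citation to \cite[Lemma~5.9]{savage14}, part (b) is Proposition~\ref{prop:irred.tensor} combined with surjectivity of the (joint) evaluation map, and your sketch for part (c) is the reconstruction argument underlying \cite[Proposition~5.11]{savage14}, which is exactly what the paper cites. One cosmetic point: in (c) one \emph{extends} the $\Gga$-module to a $\ga$-module (Proposition~\ref{prop:restriction.property}) rather than ``restricting along $\Gga\hookrightarrow\ga$'', and the non-uniqueness of that extension is precisely absorbed by the $\Gamma$-equivariance of $\pi$, as you observe.
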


\begin{proof}
Part~\eqref{lem:ev.well.def} follows from \cite[Lemma~5.9]{savage14}. Part~\eqref{lem:ev.irred} follows from Proposition~\ref{prop:irred.tensor} along with the fact that the map $ \Gev{\sfm}$ is surjective for all $\sfm \in \maxspec(A)$. Part~\eqref{lem:properties_map.injection} follows from \cite[Proposition~5.11]{savage14}.  Notice that the condition of $\g$ being basic is not used in the proofs of the results cited from \cite{savage14}.
\end{proof}

\begin{proposition}\label{prop:restriction.property}
Every finite-dimensional $\Gga$-module $V$ is isomorphic to the restriction of a finite-di\-men\-sion\-al $\ga$-module $V'$ whose support is in $X_*$.  Moreover, $V$ is irreducible if and only if $V'$ is.
\end{proposition}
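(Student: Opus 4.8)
The plan is to reduce the support of an arbitrary finite-dimensional extension of $V$ to $\ga$ using the $\Gamma$-action, and then to handle the two directions of the irreducibility equivalence separately. By \cite[Proposition~8.5]{savage14}, whose proof does not use that $\g$ is basic, $V$ is the restriction of some finite-dimensional $\ga$-module $\tilde V$. Arguing as in the proof of Proposition~\ref{prop:ann.fin.dim} — apply Proposition~\ref{prop:ann.ideals.J(M)} to $\tilde V$, and then replace the maximal ideals it produces by the union of their $\Gamma$-orbits (a finite set, since $\Gamma$ is finite) — one obtains an integer $n>0$ and a finite set $S\subseteq\maxspec(A)$ of distinct maximal ideals which is a union of $\Gamma$-orbits, such that, setting $I:=\prod_{\sfm\in S}\sfm^n$, the ideal $I$ is $\Gamma$-invariant of finite codimension and $(\g\otimes I)\tilde V=0$. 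Fix representatives $\sfm_1,\dotsc,\sfm_\ell$ of the $\Gamma$-orbits comprising $S$ and put $J:=\sfm_1^n\cdots\sfm_\ell^n$; then $\Supp(J)=\{\sfm_1,\dotsc,\sfm_\ell\}\in X_*$.

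The heart of the proof is to identify $(\ga/(\g\otimes I))^\Gamma$ with $\ga/(\g\otimes J)$ compatibly with the canonical projections. Since the ideals $\sfm^n$, $\sfm\in S$, are pairwise comaximal (Lemma~\ref{lem:assoc-alg-facts}~\eqref{lem-item:product-intersection}), the Chinese Remainder Theorem gives a Lie superalgebra isomorphism $\ga/(\g\otimes I)\cong\bigoplus_{\sfm\in S}\g\otimes A/\sfm^n$ on which $\Gamma$ acts diagonally: $\gamma$ carries the $\sfm$-summand to the $\gamma\sfm$-summand via $x\otimes\bar a\mapsto\gamma x\otimes\overline{\gamma a}$. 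Because $\Gamma$ acts freely on $\maxspec(A)$, each orbit in $S$ has exactly $|\Gamma|$ elements, and inside the block $\bigoplus_{\gamma\in\Gamma}\g\otimes A/(\gamma\sfm_j)^n$ belonging to one orbit the $\Gamma$-fixed points are exactly the tuples $(\gamma\cdot\xi)_{\gamma\in\Gamma}$ with $\xi\in\g\otimes A/\sfm_j^n$; hence projection onto the $\sfm_j$-summand restricts to a Lie superalgebra isomorphism of this block of fixed points onto $\g\otimes A/\sfm_j^n$. Summing over $j$ and applying the Chinese Remainder Theorem once more yields a Lie superalgebra isomorphism $\theta\colon(\ga/(\g\otimes I))^\Gamma\iso\ga/(\g\otimes J)$, and tracing the construction through on a generator $x\otimes a$ shows $\theta\circ(q_I|_{\Gga})=q_J|_{\Gga}$, where $q_I\colon\ga\twoheadrightarrow\ga/(\g\otimes I)$ and $q_J\colon\ga\twoheadrightarrow\ga/(\g\otimes J)$ are the quotient maps (note $q_I(\Gga)\subseteq(\ga/(\g\otimes I))^\Gamma$, as $I$ is $\Gamma$-invariant). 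Regarding $\tilde V$ as a $\ga/(\g\otimes I)$-module, restricting its action to $(\ga/(\g\otimes I))^\Gamma$, transporting along $\theta^{-1}$, and pulling back along $q_J$ turns the underlying vector space of $V$ into a $\ga$-module $V'$ annihilated by $\g\otimes J$; hence $\Supp(V')\subseteq\Supp(J)$, which lies in $X_*$, and so $\Supp(V')\in X_*$. By the compatibility just noted, restricting $V'$ to $\Gga$ recovers the composite $\Gga\to(\ga/(\g\otimes I))^\Gamma\hookrightarrow\ga/(\g\otimes I)\to\lie{gl}(V)$, which is exactly the original $\Gga$-action on $V=\tilde V|_{\Gga}$; thus $V'|_{\Gga}\cong V$.

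For the remaining assertion, suppose first that $V$ is irreducible. Any nonzero $\ga$-submodule $W$ of $V'$ is in particular a nonzero $\Gga$-submodule of $V'|_{\Gga}\cong V$, hence equals $V$ as a vector space and therefore equals $V'$; so $V'$ is irreducible. Conversely, suppose $V'$ is irreducible. By Theorem~\ref{thm:irred.classification} together with Proposition~\ref{prop:radical.ann} (which forces the annihilator of $V'$ to be a radical ideal, so the evaluation points appearing are distinct maximal ideals), $V'\cong\bigotimes_{\sfm\in\Supp(V')}\ev{\sfm}(W_\sfm)$ for nontrivial irreducible finite-dimensional $\g$-modules $W_\sfm$; and since $\Supp(V')\in X_*$, these maximal ideals lie in distinct $\Gamma$-orbits. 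Extending $\sfm\mapsto[W_\sfm]$ by $\Gamma$-equivariance — unambiguous because $\Gamma$ acts freely and $\Supp(V')$ meets each $\Gamma$-orbit at most once — produces an element $\pi\in\cal P$ for which $\Supp_*(\pi)$ may be taken to be $\Supp(V')$, whence $\cal V(\pi)=\bigotimes_{\sfm\in\Supp(V')}\Gev{\sfm}(W_\sfm)=(\bigotimes_{\sfm\in\Supp(V')}\ev{\sfm}(W_\sfm))|_{\Gga}=V'|_{\Gga}\cong V$, the middle equality holding because restriction to $\Gga$ commutes with tensor products. By Lemma~\ref{lem:properties_map}~\eqref{lem:ev.irred}, $\cal V(\pi)$ is irreducible, and hence so is $V$.

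I expect the main obstacle to be the identification in the second paragraph: one must keep careful track of the diagonal $\Gamma$-action on $\ga/(\g\otimes I)$, which simultaneously permutes the summands within each $\Gamma$-orbit of $S$ and acts nontrivially on $\g$ itself, both in order to recognize the fixed-point subalgebra as $\ga/(\g\otimes J)$ and in order to verify the compatibility $\theta\circ(q_I|_{\Gga})=q_J|_{\Gga}$ with the quotient maps. It is exactly the freeness of the $\Gamma$-action on $\maxspec(A)$ that makes this come out as a single copy of $\g\otimes A/\sfm_j^n$ for each orbit, and hence that allows $\Supp(V')$ to be taken in $X_*$.
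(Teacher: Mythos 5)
Your argument is correct, and its core is the same argument the paper invokes by citation: the paper's proof simply defers to \cite[Proposition~8.5]{savage14} (noting its proof works for any finite-dimensional simple Lie superalgebra and already yields the support statement), and what you have written is essentially a reconstruction of that proof --- annihilate by a $\Gamma$-invariant ideal $I$ supported on a union of orbits, use the Chinese Remainder Theorem plus freeness of the action on $\maxspec(A)$ to identify $(\lie g\otimes A/I)^\Gamma$ with $\lie g\otimes A/J$ for $J=\sfm_1^n\cdots\sfm_\ell^n$ compatibly with the quotient maps, and pull back. Where you genuinely diverge is the converse irreducibility direction: you route it through Theorem~\ref{thm:irred.classification}, Proposition~\ref{prop:radical.ann} and Lemma~\ref{lem:properties_map}, which works in the periplectic setting of Section~\ref{sec:periplectic} but is heavier than necessary and makes the proposition depend on the classification machinery; note also that this route needs the small extra check that the nontrivial evaluation factors of $V'$ occur at points of $\Supp(J)$ (if $W_{\sfm_0}$ were nontrivial at some $\sfm_0\notin\Supp(J)$, then since $J\not\subseteq\sfm_0$ the image of $\lie g\otimes J$ hits the full $\sfm_0$-copy of $\lie g$, contradicting $(\lie g\otimes J)V'=0$), which is what guarantees the evaluation points lie in distinct $\Gamma$-orbits. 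The cheaper argument, and the one implicit in \cite{savage14}, is that $q_J|_{\Gga}=\theta\circ q_I|_{\Gga}$ is surjective (taking $\Gamma$-invariants is exact for a finite group in characteristic zero), so $V=V'|_{\Gga}$ and $V'$ have exactly the same invariant subspaces, giving both directions of the equivalence at once without any classification input.
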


\begin{proof}
The proof of this fact for any finite-dimensional simple Lie superalgebra is the same as the proof of \cite[Proposition~8.5]{savage14}.  Notice that, although the fact that $\Supp (V')$ is an element of $X_*$ is not stated, it is also proved there.
\end{proof}

\begin{theorem}\label{thm:main.p(n)}
Let $A$ be an associative, commutative, finitely-generated algebra with unit, $\Gamma$ be a finite abelian group acting on $A$ and $\g$ by automorphisms, and such that the induced action of $\Gamma$ on $\maxspec(A)$ is free.  The map $\cal P \to \irred \Gga$ given by $\pi \mapsto \cal V(\pi)$ is a bijection.
\end{theorem}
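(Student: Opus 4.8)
The plan is to establish the two directions of bijectivity separately, using the structural results already assembled. Injectivity of $\pi \mapsto \cal V(\pi)$ is exactly Lemma~\ref{lem:properties_map}\eqref{lem:properties_map.injection}, so the content of the theorem is surjectivity: every $[V] \in \irred\Gga$ arises as $\cal V(\pi)$ for some $\pi \in \cal P$. First I would take a finite-dimensional irreducible $\Gga$-module $V$ and apply Proposition~\ref{prop:restriction.property} to realize it as the restriction of a finite-dimensional irreducible $\ga$-module $V'$ whose support lies in $X_*$.

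Next, I would feed $V'$ into Theorem~\ref{thm:irred.classification}: since $V'$ is an irreducible finite-dimensional $\ga$-module, it is isomorphic to a tensor product of evaluation modules. More precisely, unwinding the proof of that theorem, there are finitely many maximal ideals $\sfm_1, \dotsc, \sfm_r$ (the elements of $\Supp(V')$), which we may take pairwise non-$\Gamma$-conjugate because $\Supp(V') \in X_*$, and irreducible finite-dimensional $\g$-modules $V_1, \dotsc, V_r$ with $V' \cong \ev{\sfm_1}(V_1) \otimes \dotsb \otimes \ev{\sfm_r}(V_r)$. Restricting to $\Gga$, this gives $V \cong \Gev{\sfm_1}(V_1) \otimes \dotsb \otimes \Gev{\sfm_r}(V_r)$. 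I would then define $\pi \colon \maxspec(A) \to \irred(\g)$ by extending the assignment $\gamma\sfm_i \mapsto \gamma\cdot[V_i]$ $\Gamma$-equivariantly over the $\Gamma$-orbits of the $\sfm_i$, and $\pi(\sfm) = [\C]$ on all other maximal ideals; this is well-defined precisely because the $\sfm_i$ lie in distinct $\Gamma$-orbits and $\Gamma$ acts freely on $\maxspec(A)$, and it has finite support, so $\pi \in \cal P$. By construction $\{\sfm_1, \dotsc, \sfm_r\}$ is a valid choice of $\Supp_*(\pi)$, hence $\cal V(\pi) \cong \Gev{\sfm_1}(V_1) \otimes \dotsb \otimes \Gev{\sfm_r}(V_r) \cong V$, using Lemma~\ref{lem:properties_map}\eqref{lem:ev.well.def} to know the isomorphism class is independent of this choice. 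This proves surjectivity, and combined with Lemma~\ref{lem:properties_map}\eqref{lem:properties_map.injection} the map is a bijection.

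The main obstacle, and the place deserving care, is the bookkeeping around $\Gamma$-orbits: one must check that the $\sfm_i$ produced by Theorem~\ref{thm:irred.classification} really can be chosen in distinct $\Gamma$-orbits (which follows from $\Supp(V') \in X_*$ in Proposition~\ref{prop:restriction.property}), and that the $\Gamma$-equivariant extension of $\pi$ off $\Supp_*(\pi)$ genuinely recovers $V'$ as a $\ga$-module — i.e. that the free action guarantees there is no collision forcing two different $\g$-modules onto the same maximal ideal. Since all of these points are already handled in the cited results from \cite{savage14} (whose proofs, as noted in Lemma~\ref{lem:properties_map} and Proposition~\ref{prop:restriction.property}, do not use that $\g$ is basic), the argument reduces to assembling them. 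Thus I would phrase the proof as: injectivity is Lemma~\ref{lem:properties_map}\eqref{lem:properties_map.injection}; for surjectivity, combine Proposition~\ref{prop:restriction.property} and Theorem~\ref{thm:irred.classification} to write $V$ as a tensor product of evaluation modules over orbit representatives, then read off the corresponding $\pi \in \cal P$.
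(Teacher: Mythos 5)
Your proposal is correct and follows essentially the same route as the paper: injectivity via Lemma~\ref{lem:properties_map}\eqref{lem:properties_map.injection}, and surjectivity by combining Proposition~\ref{prop:restriction.property} with Theorem~\ref{thm:irred.classification} to write the module as a tensor product of evaluation modules at maximal ideals in distinct $\Gamma$-orbits and then reading off $\pi \in \cal P$. Your extra care in spelling out the $\Gamma$-equivariant extension of $\pi$ over the orbits is a harmless elaboration of what the paper leaves implicit.
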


\begin{proof}
Recall from Lemma~\ref{lem:properties_map}~\eqref{lem:properties_map.injection} that the map $\cal P \to \irred \Gga$ given by $\pi \mapsto \cal V(\pi)$ is injective.  Let $V$ be an irreducible finite-dimensional $\Gga$-module.  By Proposition~\ref{prop:restriction.property}, $V$ is isomorphic to the restriction of an irreducible finite-dimensional $\ga$-module $V'$, whose support is in $X_*$.  Hence, by Theorem~\ref{thm:irred.classification}, $V' \cong \bigotimes_{i=1}^n \ev{\sfm_i}(V_i)$ for some $n \ge 0$, $\{ \sfm_1,\dotsc, \sfm_n \}\in X_*$ and irreducible finite-dimensional \g-modules $V_1, \dotsc, V_n$.  Thus, $V$ is isomorphic to $\cal V (\pi)$, where $\pi (\sfm_i)=[V_i]$ for all $i \in \{1, \dotsc, n\}$, and $\pi (\sfm)=[\C]$ for all $\sfm \not\in \Supp(V')$.
\end{proof}

\section{Transgression maps} \label{s:transgression}

Let $\G$ be a Lie superalgebra, and consider its exterior algebra $\Lambda^\bullet \G$ with the $\G$-module structure induced from the adjoint representation.  Given a proper ideal $\I \subset \G$, define an increasing filtration $0 = \Lambda^\bullet_0 \subsetneq \Lambda^\bullet_1 \subsetneq \Lambda^\bullet_2 \subsetneq \dots \subsetneq \Lambda^\bullet = \Lambda^\bullet \G$ by:
\begin{gather*}
\Lambda^n_p
:= \cspan \{ x_1 \wedge \dots \wedge x_n \mid x_{i_1}, \dotsc, x_{i_k} \in {\I}\textup{ for some } k > n-p, \ 1 \le i_1 < \dotsb < i_k \le n \},
\ 0 < p \le n, \\
\Lambda^n_p 
:= \Lambda^n \G \quad \textup{for all } p > n.
\end{gather*}
Now, given a $\G$-module $M$, define a decreasing filtration $\dots \subsetneq C^\bullet_2 \subsetneq C^\bullet_1 \subsetneq C^\bullet_0 = \hom_\C \left( \Lambda^\bullet \G, M \right)$ by
\[
C^\bullet_p
:= \left\{ f \colon \Lambda^\bullet \to M \mid f(\Lambda^\bullet_p) = 0 \right\}.
\]

Recall that $\h^\bullet (\G, M)$, the cohomology of $\G$ with coefficients in $M$, is the cohomology of the cocomplex $\left( \hom_\C (\Lambda^\bullet \G, M), \partial^\bullet \right)$, where $\partial^\bullet$ is given in \eqref{eq:codiffs}.  Moreover, notice that $\partial^n (C^n_p) \subseteq C^{n+1}_p$ for all $p, n \ge 0$.

In this section, we will review the construction of the Lyndon-Hochschild-Serre spectral sequence that converges to $\h^\bullet (\G, M)$ in the case where $\I \cdot M = 0$.  Even though most of this material is not new (see \cite{HS53} and \cite[\S16.6]{Mus12}), we will use it to describe the transgression map in a few cases that will be important in the proof of Theorem~\ref{thm:main} (see Proposition~\ref{prop:ker.transgression}).

\subsection{$E_0$-page} \label{ss:E0.lhs}

Let $E_0^{p,q} := C^{p+q}_p / C^{p+q}_{p+1}$ for all $p, q \ge 0$.  Recall that $C^n_p$ consists of linear maps in $\hom_\C \left( \Lambda^n, M \right)$ that vanish on $\Lambda^n_p$ (and similarly for $C^n_{p+1}$).  Hence, we have the following isomorphisms of vector spaces
\begin{equation} \label{eq:E0.iso}
E_0^{p,q}
\cong \frac{\hom_\C (\Lambda^{p+q} / \Lambda^{p+q}_{p}, M)}{\hom_\C (\Lambda^{p+q} / \Lambda^{p+q}_{p+1}, M)}
\cong \hom_\C (\Lambda^{p+q}_{p+1} / \Lambda^{p+q}_p, M)
\qquad \textup{ for all } p, q \ge 0,
\end{equation}
where the second isomorphism is given by the First Isomorphism Theorem and the restriction of linear maps in $\hom_\C (\Lambda^{p+q} / \Lambda^{p+q}_{p}, M)$ to $\Lambda^{p+q}_{p+1}/\Lambda^{p+q}_p$.

Now recall that, as vector spaces, $\G \cong {\I}\oplus (\G/{\I})$ and thus, $\Lambda^n \G \cong \bigoplus_{j=0}^n \Lambda^{n-j}(\I) \otimes \Lambda^j (\G/{\I})$.  Using these isomorphisms and the definition of $\Lambda^n_p$, we see that, as vector spaces,
\begin{equation} \label{eq:Lambda.iso}
\Lambda^n_p \cong \bigoplus_{j=0}^{p-1} \Lambda^{n-j}(\I) \otimes \Lambda^j (\G/{\I})
\quad \textup{ and } \quad
\Lambda^{p+q}_{p+1} / \Lambda^{p+q}_p \cong \Lambda^q(\I) \otimes \Lambda^p (\G/{\I})
\quad \textup{ for all } \ p, q, n \ge 0.
\end{equation}
Coupling these isomorphisms with the isomorphisms \eqref{eq:E0.iso}, we obtain the following isomorphisms of vector spaces:
\[
E_0^{p,q} \cong \hom_\C (\Lambda^q(\I) \otimes \Lambda^p (\G/{\I}), M)
\quad \textup{ for all } \ p, q \ge 0.
\]
Explicitly, a linear map $f \in \hom_\C(\Lambda^q(\I) \otimes \Lambda^p (\G/{\I}), M)$ corresponds to the restriction of a linear map $F \in \hom_\C(\Lambda^{p+q}, M)$ (that vanishes on $\Lambda^{p+q}_{p}$) to $\Lambda^{p+q}_{p+1}$, and the elements in $\G/\I$ are identified with elements in $\G$ via a vector space splitting of the canonical projection $\G \twoheadrightarrow \G/\I$.

Consider the map $D_0^{p,q} \colon E_0^{p,q} \to E_0^{p, q+1}$, explicitly given by $D_0^{p,q}(F + C^{p+q}_{p+1}) = \partial^{p+q}F + C^{p+q+1}_{p+1}$.  Using the explicit formula of $\partial^{p+q}F$ \eqref{eq:codiffs} and the isomorphisms \eqref{eq:E0.iso}, \eqref{eq:Lambda.iso}, $D_0^{p,q}$ induces a map $d_0^{p,q} : \hom_\C (\Lambda^q(\I) \otimes \Lambda^p (\G/{\I}), M) \to \hom_\C (\Lambda^{q+1}(\I) \otimes \Lambda^p (\G/{\I}), M)$, which can be explicitly written as
\begin{align} \label{d0.lhs}
d_0^{p,q}f 
{}&{}(i_0 \wedge \dotsb \wedge i_q \otimes x_1 \wedge \dotsb \wedge x_p) \notag \\
{}&{}= \sum_{0 \le \ell \le q} (-1)^{\ell+|i_\ell|(|F|+|i_0| + \dotsb + |i_{\ell-1}|)} i_\ell \, F(i_0 \wedge \dotsb \wedge \widehat{i_\ell} \wedge \dotsb \wedge i_q \wedge {\widetilde x}_1 \wedge \dotsb \wedge {\widetilde x}_p) \\
&{\quad}+ \sum_{0 \le j < k \le q} (-1)^{\sigma(j,k)} F(i_0 \wedge \dotsb i_{j-1} \wedge [i_j, i_k] \wedge i_{j+1} \wedge \dotsb \wedge \widehat{i_k} \wedge \dotsb \wedge i_q \wedge {\widetilde x}_1 \wedge \dotsb \wedge {\widetilde x}_p),  \notag
\end{align}
where $\sigma(j,k) = k+|i_k|(|i_{j+1}|+ \dotsb + |i_{k-1}|)$, $i_0, \dotsc, i_q \in {\I}$, $x_1, \dotsc, x_p \in \G/\I$, and ${\widetilde x}_1, \dotsc, {\widetilde x}_p$ are their corresponding representatives in $\G$.
\details{
Notice that \eqref{d0.lhs} does not, in fact, depend on the choice of these representatives, as any other choice would differ from this one by extra terms in ${\I}$, and $f$ vanishes in $\Lambda^{p+q}_{p+1}$, that is, when more than $q$ of its arguments are in ${\I}$.
}

Now recall that, for each $p,q \ge 0$, the tensor-hom adjunction induces an isomorphism of vector spaces $E_0^{p,q} \cong \hom_\C \left( \Lambda^q (\I), \hom_\C (\Lambda^p (\G/{\I}), M) \right)$.  Using this isomorphism and \eqref{d0.lhs}, one can check that, for each $p, q \ge 0$, the map $D_0^{p,q}$ corresponds to the $q$-th differential of the cocomplex $\left( \hom_\C \left( \Lambda^\bullet (\I), \hom_\C (\Lambda^p (\G/{\I}), M) \right),\, \partial^\bullet_{\I}\right)$, used to compute $\h^\bullet (\I, \hom_\C (\Lambda^p (\G/{\I}), M))$.

\details{
In fact, $\hom_\C (\Lambda^p (\G/{\I}), M)$ admits a $\G$-module structure explicitly given by $(sf)(x) = s(f(x)) -(-1)^{|s||f|} f(s \cdot x)$ for all homogeneous $s \in \G$, $f \in \hom_\C (\Lambda^p (\G/{\I}), M)$, and $x \in \Lambda^p (\G/{\I})$.  Since $[{\I},\G]\subseteq {\I}$, it follows that $i \cdot x = 0$ for all $i \in \I$ and $x \in \Lambda^p (\G/{\I})$.  Hence, the $\I$-module structure on $\hom_\C (\Lambda^p (\G/{\I}), M)$ is given by $(if) (x) = i (f(x))$ for all $i \in {\I}$, $x \in \Lambda^p (\G/{\I})$ and $f \in \hom_\C (\Lambda^p (\G/{\I}), M)$.  The claim follows from the tensor-hom adjunction, \eqref{eq:codiffs} and \eqref{d0.lhs}.
}

\subsection{$E_1$-page} \label{ss:E1.lhs}

Let $E_1^{p, q} := \h^q  \left( E_0^{p, \bullet}, D_0^{p, \bullet} \right)$ for all $p, q \ge 0$.  As a direct consequence of the arguments of Section~\ref{ss:E0.lhs}, there are isomorphisms of vector spaces:
\[
E_1^{p,q} \cong \h^q ({\I}, \hom_\C (\Lambda^p (\G/{\I}), M))
\quad \textup{ for all } \ p,q \ge 0.
\]

Moreover, for each $p, q \ge 0$, there is an isomorphism of vector spaces
\[
\h^q ({\I}, \hom_\C (\Lambda^p (\G/{\I}), M))
\cong \hom_\C (\Lambda^p (\G/{\I}), \h^q ({\I}, M)).
\]
\details{
By definition, every element in $\h^q ({\I}, \hom_\C (\Lambda^p (\G/{\I}), M))$ is an equivalence class of linear maps in $\hom_\C (\Lambda^q(\I), \hom_\C(\Lambda^p(\G/\I), M))$.  By the tensor-hom adjunction, there are isomorphisms $\hom_\C (\Lambda^q(\I), \hom_\C (\Lambda^p (\G/{\I}), M)) \cong \hom_\C (\Lambda^q(\I) \otimes \Lambda^p (\G/{\I}), M) \cong \hom_\C (\Lambda^p (\G/{\I}), \hom_\C(\Lambda^q(\I), M))$.  Under these isomorphisms, the differential $\partial^q_{\I}$ of the cocomplex $\hom_\C (\Lambda^\bullet(\I), \hom_\C(\Lambda^p(\G/\I), M))$ corresponds to the differential $d_0^{p,q} : \hom_\C (\Lambda^q(\I) \otimes \Lambda^p (\G/{\I}), M) \to \hom_\C (\Lambda^{q+1}(\I) \otimes \Lambda^p (\G/{\I}), M)$, which corresponds, on the cocomplex $\hom_\C(\Lambda^p(\G/\I), \hom_\C(\Lambda^\bullet(\I), M))$, to the post-composition with the differential $\partial^q_{\I, M}$.  This implies that $\h^q(\I, \hom_\C(\Lambda^p(\G/\I), M))$ is isomorphic to $E_1^{p,q}$, which is isomorphic to $\hom_\C (\Lambda^p(\G/\I), \h^q(\I, M))$.

}
Now, recall that $\h^q ({\I}, M)$ is a $\G/{\I}$-module (that is, a $\G$-module with trivial ${\I}$-action) with its $\G$-module structure induced from $\hom_\C (\Lambda^q(\I), M)$.
\details{
Explicitly, $(sf) (x) = s(f(x)) -(-1)^{|s||f|} f(s \cdot x)$ for all homogeneous $s \in \G$, $f \in \hom_\C (\Lambda^q(\I), M)$, and $x \in \Lambda^q(\I)$.  The fact that $\I$ acts trivially on $\h^q(\I, M)$ follows from the fact that the elements in $\h^q (\I, M)$ are equivalence classes (modulo $\im \partial^{q-1}_{\I, M}$) of linear maps $h \in \hom_\C (\Lambda^q(\I), M)$ such that $\partial^q_{\I, M}(h) = 0$.
}
Thus, one can consider $\h^\bullet (\G/\I, \h^q(\I, M))$, the cohomology of the cocomplex $\hom_\C (\Lambda^\bullet (\G/{\I}), \h^q({\I}, M))$, whose differential will be denoted by $\partial_{\G/{\I}}^\bullet$ and is given by \eqref{eq:codiffs}.

Since every element in $E_1^{p,q}$ is an equivalence class (modulo $\im D_0^{p, q-1}$) of elements in $E_0^{p,q}$, and every element in $E_0^{p,q}$ is the restriction (to $\Lambda^{p+q}_{p+1}$) of a linear map in $\hom_\C(\Lambda^{p+q}, M)$, we can define a map $D_1^{p,q} \colon E_1^{p,q} \to E_1^{p+1, q}$ by $D_1^{p,q}(F + \im D_0^{p, q-1}) = \partial^{p+q}F + \im D_0^{p+1, q-1}$.  Using the explicit formula for $\partial^{p+q}F$ \eqref{eq:codiffs}, the tensor-hom adjunction and \eqref{d0.lhs}, we obtain that
\begin{equation} \label{d1.lhs}
\partial^{p+q}F ({\widetilde x}_0 \wedge \dotsb \wedge {\widetilde x}_p \wedge i_1 \wedge \dotsb \wedge i_q)
= (\partial^p_{\G/{\I}}F ({\widetilde x}_0 \wedge \dots \wedge {\widetilde x}_p)) (i_1 \wedge \dots \wedge i_q) + \im D_0^{p+1,q-1},
\end{equation}
for all homogeneous $i_1, \dotsc, i_{q-1} \in {\I}$, $x_0, \dotsc, x_{p+1} \in \G / {\I}$, and their corresponding representatives ${\widetilde x}_0, \dotsc, {\widetilde x}_{p+1} \in \G$.  This implies that $D_1^{p,q} : E_1^{p,q} \to E_1^{p+1,q}$ induces a map
\[
d_1^{p,q} : \hom_\C (\Lambda^p(\G/\I), \h^q(\I, M)) \to \hom_\C (\Lambda^{p+1}(\G/\I),  \h^q(\I, M)),
\]
which is the $p$-th differential of the cocomplex $( \hom_\C \left( \Lambda^\bullet (\G/{\I}), \h^q({\I}, M) \right),\, \partial_{\G/{\I}}^\bullet )$.

\subsection{$E_2$-page and the transgression map} \label{ss:E2+trans}

Let $E_2^{p,q} := \h^p \left( E_1^{\bullet, q}, D_1^{\bullet, q} \right)$ for all $p, q \ge 0$.  As a direct consequence of the arguments of Section~\ref{ss:E1.lhs}, there are isomorphisms
\begin{equation} \label{eq:E2.iso}
E_2^{p,q} \cong \h^p \left( \G/{\I}, \h^q ({\I}, M) \right)
\quad \textup{ for all } \ p, q \ge 0.
\end{equation}
It is known (see, for instance, \cite[Chapter 1, \textsection 6.5]{fuks86}) that the spectral sequence thus obtained converges to the cohomology of $\G$ with coefficients in $M$; that is,
\begin{equation} \label{eq:LHSss}
E_2^{p,q}
\cong \h^p \left( \G/{\I}, \h^q ({\I}, M) \right)
\Rightarrow \h^{p+q} (\G, M).
\end{equation}

Since every element in $E_2^{p,q}$ is an equivalence class (modulo $\im D_1^{p-1, q}$) of elements in $E_1^{p,q}$, every element in $E_1^{p,q}$ is an equivalence class (modulo $\im D_0^{p, q-1}$) of elements in $E_0^{p,q}$, and every element in $E_0^{p,q}$ is the restriction (to $\Lambda^{p+q}_{p+1}$) of a linear map in $\hom_\C(\Lambda^{p+q}, M)$, we can define a map $D_2^{p,q} \colon E_2^{p,q} \to E_2^{p+2, q-1}$ by $D_2^{p,q} (F + \im D_1^{p-1,q}) = \partial^{p+q}F + \im D_1^{p+1, q-1}$.  Using the explicit formula for $\partial^{p+q}F$ \eqref{eq:codiffs} and \eqref{d1.lhs}, we can obtain that
\begin{align} \label{d2.lhs}
&\partial^{p+q}F({\widetilde x}_0 \wedge \dotsb \wedge {\widetilde x}_{p+1} \wedge i_1 \wedge \dotsb \wedge i_{q-1}) \\
&{\ }= \sum_{0 \le j < k \le p+1} (-1)^{\sigma(j,k)} F({\widetilde x}_0 \wedge \dotsb \wedge {\widetilde x}_{j-1} \wedge [{\widetilde x}_j, {\widetilde x}_k] \wedge {\widetilde x}_{j+1} \wedge \dotsb \wedge \widehat{{\widetilde x}_k} \wedge \dotsb \wedge {\widetilde x}_{p+1} \wedge i_1 \wedge \dotsb \wedge i_{q-1}) \notag \\
&{\qquad}+ \im D_1^{p+1, q-1}, \notag
\end{align}
where $\sigma(j,k) = k+|x_k|(|x_{j+1}|+ \cdots + |x_{k-1}|)$, for all homogeneous $i_1, \dotsc, i_{q-1} \in {\I}$, $x_0, \dotsc, x_{p+1} \in \G/\I$, and their corresponding representatives ${\widetilde x}_0, \dotsc, {\widetilde x}_{p+1} \in \G$.  For each $p,q \ge 0$, denote by
\[
d_2^{p,q} : \h^p(\G/\I, \h^q(\I, M)) \to \h^{p+2} (\G/\I, \h^{q-1}(\I, M))
\]
the map induced by $D_2^{p,q}$ via the isomorphism \eqref{eq:E2.iso}.

\begin{definition} \label{defn:transgression}
The transgression map of the Lyndon-Hochschild-Serre spectral sequence \eqref{eq:LHSss} is defined to be
\[
d_2^{0,1} \colon \h^0 (\G/\I, \h^1(\I, M)) \to \h^2(\G/\I, \h^0(\I, M)).
\]
\end{definition}

In the next result, we describe the kernel of this transgression map in two particular cases that are important to the proof of Theorem~\ref{thm:main}.

\begin{proposition} \label{prop:ker.transgression}
Let $\G$ be a Lie superalgebra and $\I \subsetneq \G$ be an ideal.
\begin{enumerate}[(a)]
\item \label{item:ker.transgression.ev.mod}
If $\G/{\I}$ is a Lie subalgebra of $\G$, that is, $[{\widetilde x}_j, {\widetilde x}_k] \in \G \setminus {\I}$ for all ${\widetilde x}_j, {\widetilde x}_k \in \G \setminus {\I}$, then $d_2^{0,1} = 0$.  In particular, if $M$ is a finite-dimensional $\g$-module, $\G = \Gga$ and $\I = \g \otimes I$, $I = \prod_{\gamma \in \Gamma} \gamma\sfm$, $\sfm \in \maxspec(A)$, then the kernel of the transgression map
\[
d_2^{0,1} : \hom_{(\lie g \otimes A/I)^\Gamma} \left( (\lie g \otimes I / I^2)^\Gamma, \Gev{\sfm} M \right)
\to \h^2 \left( (\ga/I)^\Gamma, \Gev{\sfm} M \right)
\]
is isomorphic to $\hom_\g \left( \g, M \right)^{\oplus \dim_{A/\sfm} \sfm / \sfm^2}$.

\item \label{item:ker.transgression.gen.ev.mod}
If $[\G/{\I}, \G/{\I}] = {\I}$, then $d_2^{0,1} f ({\widetilde x}_0 \wedge {\widetilde x}_1) = 0$ for all ${\widetilde x}_0, {\widetilde x}_1$, if and only if $f = 0$.  In particular, if $\G = \Gga$ and $\I = \g \otimes I$, $I = \prod_{\gamma \in \Gamma} (\gamma\sfm)^n$, $\sfm \in \maxspec(A)$, $n > 1$, then the kernel of the transgression map $d_2^{0,1} : \hom_{(\lie g \otimes A/I)^\Gamma} \left( (\lie g \otimes I / I^2)^\Gamma, \Gev{\sfm^n} M \right)
\to \h^2 \left( (\ga/I)^\Gamma, \Gev{\sfm^n} M \right)$ is $0$.
\end{enumerate}
\end{proposition}

\begin{proof}
Part~\eqref{item:ker.transgression.ev.mod} follows from \eqref{d2.lhs}, the fact that $(\ga/I)^\Gamma \cong \g$, and the fact that $(\g \otimes I/I^2)^\Gamma \cong \g \otimes \sfm / \sfm^2 \cong \g^{\oplus \dim_{A/\sfm} \sfm / \sfm^2}$. To prove part~\eqref{item:ker.transgression.gen.ev.mod}, first notice that, by \eqref{d2.lhs}, we have that $\partial^1 f ({\widetilde x}_0 \wedge {\widetilde x}_1) = (-1)^{1+|x_1| |x_0|} f([{\widetilde x}_0, {\widetilde x}_1])$, for all representatives ${\widetilde x}_0, {\widetilde x}_1 \in \G$ of corresponding elements $x_0, x_1 \in \G/{\I}$. Since $[\G/{\I}, \G/{\I}] = {\I}$ by hypothesis, it follows that $\partial^1 f ({\widetilde x}_0 \wedge {\widetilde x}_1) = 0$ for all ${\widetilde x}_0, {\widetilde x}_1$ if and only if $f(\I) = 0$.
\end{proof}

\section{Extensions} \label{Exts}

Throughout this section, we will assume that $\lie g$ is a finite-dimensional simple Lie superalgebra, that $A$ is an associative, commutative, finitely-generated algebra with unit, that $\Gamma$ is a finite abelian group acting on $\g$ and $A$ by automorphisms, and that the action of $\Gamma$ on $\maxspec (A)$ is free.

We begin by using Lemma \ref{lem:isos} to reduce the problem of describing $p$-extensions between finite-dimensional irreducible $\Gga$-modules to that of describing the $p$-th cohomology of $\Gga$ with coefficients in indecomposable modules.

\begin{proposition} \label{prop:ext=h}
Let $\pi, \pi' \in \cal P$.  There exist $n, \ell, q_1, \dotsc, q_\ell \in \bb Z_{>0}$, maximal ideals $\sfm_1, \dotsc, \sfm_\ell \subseteq A$ in distinct $\Gamma$-orbits, and, for each $i \in \{ 1 , \dotsc, \ell \}$, $j \in \{ 1, \dotsc, q_i \}$, a finite-dimensional indecomposable $\ga / \sfm_i^n$-module $M_{i, j}$, such that 
\begin{gather*}
\hom_\C ( \cal V(\pi), \cal V(\pi') )^{\oplus 2^{\kappa(\pi)+\kappa(\pi')}}
\cong \bigoplus_{\atop{1 \le j_i \le q_i}{1 \le i \le \ell}} \Gev{\sfm_1^n} M_{1, j_1} \otimes \dotsb \otimes \Gev{\sfm_\ell^n} M_{\ell, j_\ell}
\quad \textup{and} \\
\Ext^p_\Gga ( \cal V(\pi), \cal V(\pi') )^{\oplus 2^{\kappa(\pi)+\kappa(\pi')}}
\cong \bigoplus_{\atop{1 \le j_i \le q_i}{1 \le i \le \ell}} \h^p \left( \Gga, \, \Gev{\sfm_1^n} M_{1, j_1} \otimes \dotsb \otimes \Gev{\sfm_\ell^n} M_{\ell, j_\ell} \right),
\quad p>0.
\end{gather*}
\end{proposition}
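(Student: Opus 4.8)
The plan is to translate the Hom and Ext computations into statements about cohomology of $\Gga$ and then decompose everything over the maximal ideals appearing in the supports. First I would invoke Remark~\ref{rmk:gather.gen.ev} to write $\cal V(\pi) \cong \bightimes_{i=1}^\ell \Gev{\sfm_i^n} V_i$ and $\cal V(\pi') \cong \bightimes_{i=1}^\ell \Gev{\sfm_i^n} V'_i$ for a common integer $n$, a common list of maximal ideals $\sfm_1,\dots,\sfm_\ell$ in distinct $\Gamma$-orbits, and irreducible $\ga/\sfm_i^n$-modules $V_i, V'_i$; this is exactly the normal form that Remark~\ref{rmk:gather.gen.ev} guarantees we may assume. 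Using the defining property of $\kappa$ (equation~\eqref{eq:kappa.function}) we then replace the irreducible products by genuine tensor products at the cost of the factors $2^{k_0}$ and $2^{k'_0}$: namely $\cal V(\pi)^{\oplus 2^{k_0}} \cong \bigotimes_{\sfm \in \Supp(\pi)} \Gev{\sfm^n}\pi(\sfm)$ and similarly for $\pi'$. After reindexing by $\Gamma$-orbit representatives, and using Proposition~\ref{prop:restriction.property} together with \cite[Proposition~8.5]{savage14} to pass between $\Gga$-modules and $\ga$-modules where convenient, it suffices to treat $\hom_\C$ and $\Ext^p_\Gga$ of tensor products $\bigotimes_{i=1}^\ell \Gev{\sfm_i^n} V_i$ and $\bigotimes_{i=1}^\ell \Gev{\sfm_i^n} V'_i$.

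Next I would use Lemma~\ref{lem:isos} to rewrite $\Ext^p_\Gga(\cal V(\pi),\cal V(\pi')) \cong \h^p(\Gga, \cal V(\pi)^* \otimes \cal V(\pi'))$ and $\hom_\C(\cal V(\pi),\cal V(\pi')) \cong \cal V(\pi)^* \otimes \cal V(\pi')$, and observe that the dual and tensor product interact well with evaluation functors: $\left(\Gev{\sfm_i^n} V_i\right)^* \cong \Gev{\sfm_i^n}(V_i^*)$, and $\bigotimes_i \Gev{\sfm_i^n} V_i^* \otimes \bigotimes_i \Gev{\sfm_i^n} V_i'$ can be regrouped factor-by-factor as $\bigotimes_i \Gev{\sfm_i^n}(V_i^* \otimes V_i')$ since pullback along ${\rm ev}_{\sfm_i^n}^\Gamma$ commutes with tensor products over $\C$. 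So the Hom-module is an $\ell$-fold tensor product of $\Gga$-modules of the form $\Gev{\sfm_i^n}(V_i^* \otimes V_i')$. Now $V_i^* \otimes V_i'$ is a finite-dimensional $\ga/\sfm_i^n$-module, not irreducible in general, but since $\ga/\sfm_i^n$ is finite-dimensional it decomposes as a finite direct sum $\bigoplus_{j=1}^{q_i} M_{i,j}$ of finite-dimensional indecomposable $\ga/\sfm_i^n$-modules by the Krull--Schmidt theorem. Pulling this decomposition back through $\Gev{\sfm_i^n}$ (which is an additive exact functor) and distributing the tensor product over these direct sums yields exactly the claimed decomposition of $\hom_\C(\cal V(\pi),\cal V(\pi'))^{\oplus 2^{k_0+k_0'}}$ as $\bigoplus_{1\le j_i\le q_i,\,1\le i\le\ell} \Gev{\sfm_1^n} M_{1,j_1}\otimes\dots\otimes\Gev{\sfm_\ell^n} M_{\ell,j_\ell}$.

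For the $\Ext^p$ statement I would apply $\h^p(\Gga, -)$ to this direct-sum decomposition of the Hom-module; since cohomology commutes with finite direct sums of coefficient modules, $\Ext^p_\Gga(\cal V(\pi),\cal V(\pi'))^{\oplus 2^{k_0+k_0'}} \cong \h^p\!\left(\Gga,\, \bigoplus \Gev{\sfm_1^n} M_{1,j_1}\otimes\dots\otimes\Gev{\sfm_\ell^n} M_{\ell,j_\ell}\right) \cong \bigoplus \h^p\!\left(\Gga,\, \Gev{\sfm_1^n} M_{1,j_1}\otimes\dots\otimes\Gev{\sfm_\ell^n} M_{\ell,j_\ell}\right)$, which is the desired formula. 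The one point that needs care — and I expect it to be the main obstacle — is the factor-by-factor regrouping of dual-tensor evaluation modules and the bookkeeping of the powers of $2$: one must verify that after replacing irreducible products by tensor products on \emph{both} sides, the $\Gamma$-orbit structure still matches up so that the same list $\sfm_1,\dots,\sfm_\ell$ works for $\pi$ and $\pi'$ simultaneously, and that $\Supp(\pi)$ and $\Supp(\pi')$ can be padded with trivial modules to a common index set without changing $\kappa$. The remaining steps (commuting $\Gev{\sfm_i^n}$ with duals and tensor products, Krull--Schmidt, additivity of cohomology) are routine once the indexing is set up correctly.
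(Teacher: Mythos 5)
Your proposal is correct and follows essentially the same route as the paper: reduce to the common normal form of Remark~\ref{rmk:gather.gen.ev}, trade irreducible products for tensor products at the cost of the factors $2^{k_0}$ and $2^{k'_0}$, regroup the dualized evaluation modules factor-by-factor, decompose each $V_i^*\otimes V_i'$ into indecomposables, and finish with Lemma~\ref{lem:isos} plus additivity of cohomology in the coefficients. The point you flag as the main obstacle (a common list of ideals and exponent for $\pi$ and $\pi'$) is exactly what Remark~\ref{rmk:gather.gen.ev} already supplies, so no extra argument is needed there.
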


\begin{proof}
From Remark~\ref{rmk:gather.gen.ev}, there exist $n, \ell \in \bb Z_{>0}$, maximal ideals $\sfm_1, \dotsc, \sfm_\ell \subseteq A$ in distinct $\Gamma$-orbits, and finite-dimensional irreducible $\ga/\sfm_i^n$-modules $V_i$ and $V'_i$ such that
\[
\cal V (\pi) \cong \bightimes_{i=1}^\ell \Gev{\sfm_i^n} V_i
\quad \textup{and} \quad
\cal V (\pi') \cong \bightimes_{i=1}^\ell \Gev{\sfm_i^n} V'_i.
\]
Thus, there are isomorphisms of $\Gga$-modules
\begin{align*}
\hom_\C \left( \cal V (\pi), \cal V (\pi') \right)^{\oplus 2^{\kappa(\pi)+\kappa(\pi')}}
&\cong \hom_\C \left( \cal V (\pi)^{\oplus 2^{\kappa(\pi)}}, \cal V (\pi')^{\oplus 2^{\kappa(\pi')}} \right) \\
&\cong \left( \bigotimes_{i=1}^\ell \Gev{\sfm_i^n} V_i^* \right)
\otimes \left( \bigotimes_{i=1}^\ell \Gev{\sfm_i^n} V'_i \right) \\
&\cong \bigotimes_{i=1}^\ell \Gev{\sfm_i^n} \left( V_i^* \otimes V'_i \right).
\end{align*}
For each $i \in \{1, \dotsc, \ell\}$, since $V^*_i$ and $V'_i$ are finite dimensional, there exist $q_i > 0$ and indecomposable $\lie g\otimes A/\sfm_i^n$-modules, $M_{i,1}, \dotsc, M_{i, q_i}$, such that $V^*_i \otimes V'_i \cong \bigoplus_{j=1}^{q_i} M_{i,j}$. Thus there exist isomorphisms of $\Gga$-modules
\[
\bigotimes_{i=1}^\ell \Gev{\sfm_i^n} \left( V_i^* \otimes V'_i \right)
\cong \bigotimes_{i = 1}^\ell \left( \bigoplus_{j_i=1}^{q_i} \Gev{\sfm_i^n} \left( M_{i, j_i} \right) \right)
\cong \bigoplus_{\atop{1 \le j_i \le q_i}{1 \le i \le \ell}}
\Gev{\sfm_1^n} M_{1, j_1} \otimes \dotsb \otimes \Gev{\sfm_\ell^n} M_{\ell, j_\ell}.
\]
This proves the first statement.  The second statement follows from the first one and Lemma~\ref{lem:isos}.
\end{proof}

This next result is a particular case of Proposition~\ref{prop:ext=h} that will be used in to prove Corollary~\ref{cor:Ext1=0}.

\begin{corollary} \label{cor:ext=h.disjoint.supp}
Let $V$ and $V'$ be finite-dimensional irreducible \Gga-modules.  If the supports of $V$ and $V'$ are disjoint, then $V^* \htimes V'$ is irreducible and, for all $p>0$,
\[
\Ext_\Gga^p (V, V')
\cong
\begin{cases}
\h^p (\Gga, V^* \htimes V'), & \textup{ if } V^*\otimes V'\textup{ is irreducible},\\
\h^p (\Gga, V^* \htimes V')^{\oplus 2}, & \textup{ otherwise}.
\end{cases}
\]
\end{corollary}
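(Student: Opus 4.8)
The plan is to derive Corollary~\ref{cor:ext=h.disjoint.supp} as a special case of Corollary~\ref{cor:ext=h}, the only subtlety being that when the supports of $V$ and $V'$ are disjoint we may not be able to take a \emph{single} power $n$ and a \emph{common} list of maximal ideals $\sfm_1,\dotsc,\sfm_\ell$ working for both modules in the naive way; instead, we will merge the two supports. First I would invoke Proposition~\ref{prop:ann.fin.dim} (together with Remark~\ref{rmk:gather.gen.ev}) to write $V\cong\bightimes_{i=1}^{\ell}\Gev{\sfm_i^n}V_i$ and $V'\cong\bightimes_{i=1}^{\ell}\Gev{\sfm_i^n}V'_i$ for a \emph{common} collection of maximal ideals $\sfm_1,\dotsc,\sfm_\ell$ in distinct $\Gamma$-orbits and a common $n>0$, where now some of the $V_i$ (those $\sfm_i$ not in $\Supp(V)$) or some of the $V'_i$ are the trivial module. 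The hypothesis that $\Supp(V)\cap\Supp(V')=\varnothing$ then says precisely that for each $i$, at least one of $V_i$, $V'_i$ is trivial.

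Next I would check that the hypothesis of Corollary~\ref{cor:ext=h} is satisfied, namely that $V_i^*\otimes V'_i$ is a completely reducible $\ga/\sfm_i^n$-module for every $i$. This is immediate from the previous paragraph: for each $i$ either $V_i$ is trivial, in which case $V_i^*\otimes V'_i\cong V'_i$ is irreducible (hence completely reducible), or $V'_i$ is trivial, in which case $V_i^*\otimes V'_i\cong V_i^*$ is irreducible. In fact this shows each $V_i^*\otimes V'_i$ is already irreducible, so in the notation of Corollary~\ref{cor:ext=h} we have $q_i=1$ and $L_{i,1}=V_i^*\otimes V'_i$ for all $i$. Consequently the sum over $(j_1,\dotsc,j_\ell)$ in Corollary~\ref{cor:ext=h} collapses to a single term, and the formula there reads
\[
\Ext^p_\Gga(V,V')^{\oplus 2^{k_0+k'_0}}
\cong \h^p\bigl(\Gga,\ \text{$\bightimes$}_{i=1}^{\ell}\Gev{\sfm_i^n}L_{i,1}\bigr)^{\oplus 2^{\kappa(L_{1,1},\dotsc,L_{\ell,1})}},
\]
with $k_0=\kappa(\pi)$, $k'_0=\kappa(\pi')$, and similarly for $\hom_\C$.

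It then remains to simplify the combinatorial bookkeeping so that the powers of $2$ cancel, and to identify $\bightimes_{i}\Gev{\sfm_i^n}L_{i,1}$ with $V^*\htimes V'$. For the identification: since $V^*\cong\bightimes_i\Gev{\sfm_i^n}V_i^*$ and $V'\cong\bightimes_i\Gev{\sfm_i^n}V'_i$, and since $\htimes$ (equivalently $\bightimes$) is associative and commutative up to isomorphism by \cite[Lemma~6.2]{CMS15}, we may interleave the factors and use the evident isomorphism $\Gev{\sfm_i^n}(V_i^*\otimes V'_i)\cong\Gev{\sfm_i^n}V_i^*\htimes\Gev{\sfm_i^n}V'_i$ (which holds because one of the two tensor factors is trivial, so the irreducible product is just the tensor product on that factor) to get $\bightimes_i\Gev{\sfm_i^n}L_{i,1}\cong V^*\htimes V'$; in particular $V^*\htimes V'$ is irreducible. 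For the powers of $2$: one shows $\kappa(L_{1,1},\dotsc,L_{\ell,1})+k_0+k'_0$ equals $\kappa(\pi)+\kappa(\pi')+$ the correction term, and that the whole thing reduces to the statement $V^*\otimes V'\cong (V^*\htimes V')^{\oplus 2^{\varepsilon}}$ where $\varepsilon=1$ if $V^*\otimes V'$ is reducible and $\varepsilon=0$ otherwise, which is the $\ell=1$-type dichotomy for irreducible products recorded in Section~\ref{ss:lie.sup}. Feeding this back, the $2^{k_0+k'_0}$ on the left matches the $2^{\kappa(L_{1,1},\dotsc,L_{\ell,1})}$ on the right up to an extra factor of $2^{\varepsilon}$, giving exactly the two cases in the statement. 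The main obstacle I anticipate is the last bookkeeping step: keeping track of exactly which $\End_{\bar 1}$'s are nonzero so that the various $\kappa$'s add up correctly and the powers of $2$ cancel cleanly — this is where a careful but routine induction on $\ell$, or a direct appeal to the identity $V_1\otimes\dotsb\otimes V_\ell\cong(V_1\htimes\dotsb\htimes V_\ell)^{\oplus 2^{\kappa}}$ applied to both $(V_1,\dotsc,V_\ell)=(V_1^*,V_1',\dotsc)$ interleaved and to the $L_{i,1}$'s, is needed.
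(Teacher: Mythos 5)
Your proposal is correct and matches the paper's intended argument: the paper records this corollary as an immediate special case of Corollary~\ref{cor:ext=h} (with no written proof), and your derivation—using a common decomposition with trivial factors, noting each $V_i^*\otimes V'_i$ is irreducible since one factor is trivial, identifying the single summand with $V^*\htimes V'$, and settling the powers of $2$ via the identity $V_1\otimes\dotsb\otimes V_\ell\cong(V_1\htimes\dotsb\htimes V_\ell)^{\oplus 2^{\kappa}}$ together with Krull--Schmidt—is exactly that specialization carried out in detail.
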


Our next goal is to reduce the problem of determining 1-extensions between finite-dimensional $\Gga$-modules to that of determining homomorphisms and extensions between finite-dimensional $\ga/\sfm^n$-modules, where $\sfm$ is a maximal ideal of $A$ and $n$ is a positive integer.  We start with a general result regarding first cohomology.   Recall the definition of transgression map (Definition~\ref{defn:transgression}).

\begin{lemma} \label{lem:h1(ga,M)}
If $M$ is a finite-dimensional $\Gga$-module, then there exists a finite-codi\-men\-sion\-al $\Gamma$-invariant ideal $I \subseteq A$, such that
\[
\h^1 \left( \Gga, M \right)
\cong \h^1 \left( (\lie g \otimes A / I)^\Gamma, M \right) \oplus K,
\]
where $K$ is the kernel of the transgression map 
\[
t_M\colon \hom_{(\lie g \otimes A/I)^\Gamma} \left( (\lie g \otimes I / I^2)^\Gamma, M \right)
\to \h^2 \left( (\ga/I)^\Gamma, M \right).
\]
\end{lemma}

\begin{proof}
Since $M$ is a finite-dimensional $\Gga$-module, by Proposition~\ref{prop:ann.fin.dim}, there exists a $\Gamma$-invariant finite-codimensional ideal $I \subseteq A$ such that $(\g \otimes I)^\Gamma M = 0$.  By Proposition~\ref{prop:lhsss}, there exists a first-quadrant cohomology spectral sequence associated to $\Gga$ and $(\lie g \otimes I)^\Gamma$, namely
\begin{equation} \label{eq:LHSss1}
E_2^{p,q}
\cong \h^p \left( (\lie g \otimes A/I)^\Gamma, \h^q \left( (\lie g \otimes I)^\Gamma, M \right) \right)
\Rightarrow \h^{p+q} \left( \Gga, M \right).
\end{equation}
Since \eqref{eq:LHSss1} is a first-quadrant cohomology spectral sequence, we have an isomorphism of vector spaces $\h^1 (\Gga, M) \cong E_\infty^{1,0} \oplus E_\infty^{0,1}$.  Moreover, 
\[
E_\infty^{1,0}
= E_2^{1,0}
\cong \h^1 \left( (\lie g \otimes A/I)^\Gamma,  M \right)
\quad \textup{ and } \quad
E_\infty^{0,1}
= E_3^{0,1}
\cong \ker (d_2^{0,1} \colon E_2^{0,1} \to E_2^{2,0}),
\]
where $d_2^{0,1}$ is the transgression map $t_M$.

In order to finish the proof, we only need to describe $E_2^{0,1}$ and $E_2^{2,0}$.  By \eqref{eq:LHSss1},
\[
E_2^{0,1} \cong \h^0 \left( (\lie g \otimes A/I)^\Gamma, \h^1 \left( (\lie g \otimes I)^\Gamma, M \right) \right)
\quad \textup{and} \quad
E_2^{2,0} \cong \h^2 ((\lie g \otimes A/I)^\Gamma,  M).
\]
Since $(\lie g \otimes I)^\Gamma$ acts trivially on $M$, by Lemma~\ref{lem:triv.mods}, there is an isomorphism of $(\lie g \otimes A/I)^\Gamma$-modules $\h^\bullet \left( (\lie g \otimes I)^\Gamma, M \right) \cong \h^\bullet \left( (\lie g \otimes I)^\Gamma, \C \right) \otimes M$.  Moreover, by Lemma~\ref{lem:com.even}, $\h^1 \left( (\lie g \otimes I)^\Gamma, \C \right)$ is isomorphic to $((\lie g \otimes I/I^2)^\Gamma)^*$ as a $(\lie g \otimes A/I)^\Gamma$-module.  Thus $E_2^{0,1} \cong \hom_{(\lie g \otimes A/I)^\Gamma} \left( (\lie g \otimes I/I^2)^\Gamma, M \right)$.
\end{proof}

As a consequence of Lemmas~\ref{lem:isos} and \ref{lem:h1(ga,M)}, we obtain the following result.

\begin{corollary} \label{cor:extp.fd}
If $V, V'$ are finite-dimensional irreducible $\Gga$-modules, then $\Ext^1_\Gga (V, V')$ is finite dimensional.
\end{corollary}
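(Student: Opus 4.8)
The plan is to reduce everything to the first cohomology group computed in Proposition~\ref{prop:h1(ga,M)} and then observe that each piece appearing there is visibly finite-dimensional. First I would invoke Lemma~\ref{lem:isos} to rewrite $\Ext^1_\Gga(V,V') \cong \h^1(\Gga, V^*\otimes V')$, and set $M := V^*\otimes V'$, which is a finite-dimensional $\Gga$-module because $V$ and $V'$ are.

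Next I would apply Proposition~\ref{prop:h1(ga,M)} to this $M$: it yields a finite-codimensional $\Gamma$-invariant ideal $I\subset A$ together with an isomorphism
\[
\h^1(\Gga, M) \cong \h^1\bigl((\lie g\otimes A/I)^\Gamma, M\bigr) \oplus H',
\]
where $H'$ is the kernel of the transgression homomorphism defined on $\hom_{(\lie g\otimes A/I)^\Gamma}\bigl((\lie g\otimes I/I^2)^\Gamma, M\bigr)$ with values in $\h^2\bigl((\ga/I)^\Gamma, M\bigr)$. It then remains to check that both summands are finite-dimensional. Since $A$ is finitely generated, it is Noetherian, so $I$ is finitely generated; as $A/I$ is finite-dimensional, it follows that both $A/I$ and $I/I^2$ are finite-dimensional $\C$-vector spaces, and hence (recall $\lie g$ is finite-dimensional) $(\lie g\otimes A/I)^\Gamma$ is a finite-dimensional Lie superalgebra and $(\lie g\otimes I/I^2)^\Gamma$ is a finite-dimensional vector space. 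Consequently the Chevalley--Eilenberg complex \eqref{chevalley.cocomplex} (taken with $V=\C$) computing $\h^\bullet\bigl((\lie g\otimes A/I)^\Gamma, M\bigr)$ has finite-dimensional terms, so $\h^1\bigl((\lie g\otimes A/I)^\Gamma, M\bigr)$ is finite-dimensional; and $H'$ is a subspace of $\hom_\C\bigl((\lie g\otimes I/I^2)^\Gamma, M\bigr)$, which is finite-dimensional. Therefore $\h^1(\Gga,M)$, and with it $\Ext^1_\Gga(V,V')$, is finite-dimensional.

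I do not anticipate any serious obstacle: the substantive content of the statement is already packaged into Proposition~\ref{prop:h1(ga,M)}, and the only remaining point is the finite-dimensionality of $A/I$ and $I/I^2$, which is a standard consequence of $A$ being a finitely generated (hence Noetherian) $\C$-algebra. The one mild subtlety to be careful about in the write-up is that one genuinely needs $I$ to be finitely generated in order to conclude that $I/I^2$ is finite-dimensional over $\C$ — this is precisely where Noetherianity of $A$ enters — since a priori $I/I^2$ is only a finitely generated $A/I$-module once $I$ is known to be finitely generated.
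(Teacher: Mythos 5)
Your proposal is correct and follows essentially the same route as the paper: reduce via Lemma~\ref{lem:isos} to $\h^1(\Gga, V^*\otimes V')$, apply Proposition~\ref{prop:h1(ga,M)}, and observe that both summands are finite-dimensional because $\lie g$, $A/I$, $I/I^2$ and $V^*\otimes V'$ are. Your extra remark justifying the finite-dimensionality of $I/I^2$ (Noetherianity of the finitely generated algebra $A$ plus finite codimension of $I$) is a detail the paper simply asserts, and it is correctly handled.
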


\begin{proof}
Since $V$ and $V'$ are finite-dimensional modules, by Lemma~\ref{lem:isos}, $\Ext^1_{\Gga} (V, V')$ is isomorphic to $\h^1 \left( \Gga, V^* \otimes V' \right)$.  By Lemma~\ref{lem:h1(ga,M)}, $\h^1 \left( \Gga, V^* \otimes V' \right)$ is isomorphic to a subspace of
\begin{equation} \label{eq:h1.sub}
\h^1 \left( (\lie g \otimes A / I)^\Gamma, V^* \otimes V' \right) \oplus \hom_{(\lie g \otimes A/I)^\Gamma} \left( (\lie g \otimes I/I^2)^\Gamma, V^* \otimes V' \right),
\end{equation}
where $I$ is a finite-codimensional $\Gamma$-invariant ideal of $A$.  Since $\g$, $A/I$, $I/I^2$, and $V^* \otimes V'$ are finite dimensional, both terms in \eqref{eq:h1.sub} are finite dimensional.  This proves that $\h^1 \left( \Gga, V^* \otimes V' \right)$ is finite dimensional, and finishes the proof.
\end{proof}

We emphasize the relevance of Corollary~\ref{cor:extp.fd} by contrasting it with a case where $\Ext^1$ is not finite dimensional.  Namely, let $\Gamma$ be  a group acting by automorphisms on an abelian Lie superalgebra $\lie a$ and on an associative commutative algebra with unit $B$.  For any finite-dimensional trivial $(\lie a \otimes B)^\Gamma$-modules $M$ and $M'$,
\[
\Ext^1_{(\lie a \otimes B)^\Gamma} (M, M')
\cong \hom_\C \left( (\lie a \otimes B)^\Gamma, M^* \otimes M' \right)
\]
is finite dimensional if and only if $(\lie a \otimes B)^\Gamma$ is finite dimensional.  In particular, when $\Gamma$ is trivial and $B$ is infinite dimensional, $\Ext^1_{(\lie a \otimes B)^\Gamma} (M, M')$ is infinite dimensional.

Recall from Corollary~\ref{cor:h1(ga,C)=0} that $\h^1 (\Gga, \C) = 0$.  The next result gives a vanishing condition for $\h^1 (\Gga, M)$ when $M$ is a finite-dimensional $\Gga$-module of the form $\bigotimes_{i=1}^\ell \Gev{\sfm_i^{n_i}} M_i$, generalizes \cite[Theorem 3.6]{kodera10} and \cite[Theorem 3.7]{NS15}.

\begin{proposition} \label{prop:h1(ga,M)}
Let $\ell, n_1, \dotsc, n_\ell \in \bb Z_{>0}$, $\sfm_1, \dotsc, \sfm_\ell \subseteq A$ be maximal ideals in distinct $\Gamma$-orbits, for each $i \in \{ 1, \dotsc, \ell \}$, let $M_i$ be a finite-dimensional $\lie g \otimes A/\sfm_i^{n_i}$-module, and $M = \bigotimes_{i=1}^\ell \Gev{\sfm_i^{n_i}} M_i$.

\begin{enumerate}[(a)]
\item \label{prop:h1(ga,M)a}
If $\hom_{\lie g \otimes A/\sfm_i^{n_i}} (\C, M_i) = 0$ for more than one index $i$, then $\h^1 \left( \Gga, M \right) = 0$.

\item \label{prop:h1(ga,M)b}
If $\hom_{\lie g \otimes A/\sfm_i^{n_i}} (\C, M_i) = 0$ for exactly one index $i$, then
\[
\h^1 \left( \Gga, M \right)
\cong \h^1 \left( \Gga, \Gev{\sfm_i^{n_i}} M_i \right) \otimes \bigotimes_{j \ne i} \hom_{\lie g \otimes A/\sfm_j^{n_j}} (\C, M_j).
\]

\item \label{prop:h1(ga,M)c}
If $\hom_{\lie g \otimes A/\sfm_i^{n_i}} (\C, M_i) \ne 0$ for all indices $i$, then
\[
\h^1 \left( \Gga, M \right)
\cong \bigoplus_{i=1}^\ell \left( \h^1 \left( \Gga, \Gev{\sfm_i^{n_i}} M_i \right) \otimes \bigotimes_{j \ne i} \hom_{\lie g \otimes A/\sfm_j^{n_j}} (\C, M_j) \right).
\]
\end{enumerate}
\end{proposition}

\begin{proof}
By Proposition \ref{prop:kunneth}, we have
\[
\h^1 \left( \Gga, M \right) 
\cong \bigoplus_{i=1}^\ell \left( \h^1 \left( \Gga, \Gev{\sfm_i^{n_i}} M_i \right) \otimes \bigotimes_{j\neq i} \hom_\Gga \left( \C, \Gev{\sfm_j^{n_j}} M_j \right) \right).
\]
Now, notice that $\hom_\Gga \left( \C, \Gev{\sfm_k^{n_k}} M_k \right) \cong  \hom_{\ga/\sfm_k^{n_k}} \left( \C, M_k \right)$ for all $k \in \{1, \dotsc, \ell\}$.  
This proves part~\eqref{prop:h1(ga,M)c}.  If $\hom_{\lie g \otimes A/\sfm_k^{n_k}} (\C, M_k) = 0$ for more than one index $k$, then for each $i$, there exists $j \neq i$ such that $\hom_{\lie g \otimes A/\sfm_j^{n_j}} (\C, M_j) = 0$.  This proves part~\eqref{prop:h1(ga,M)a}.  If $\hom_{\lie g \otimes A/\sfm_i^{n_i}} (\C, M_i) = 0$ for exactly one index $i$, then $\bigotimes_{j \neq k} \hom_{\lie g \otimes A/\sfm_j^{n_j}} (\C, M_j) = 0$ for all $k \ne i$.  This proves part~\eqref{prop:h1(ga,M)b}.
\end{proof}

The next result generalizes \cite[Lemma 3.3]{kodera10} and \cite[Proposition~3.6]{NS15}.

\begin{corollary} \label{cor:Ext1=0}
Let $V$ and $V'$ be nontrivial, finite-dimensional, irreducible $\Gga$-modules.  If the supports of $V$ and $V'$ are disjoint, then $\Ext^1_{\Gga} (V, V') = 0$.
\end{corollary}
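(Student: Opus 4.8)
The plan is to deduce Corollary~\ref{cor:Ext1=0} directly from the machinery already assembled, so the proof is essentially a bookkeeping exercise. First I would invoke the classification of finite-dimensional irreducible $\Gga$-modules: write $V \cong \mathcal V(\pi)$ and $V' \cong \mathcal V(\pi')$ for $\pi, \pi' \in \mathcal P$. Since the supports of $V$ and $V'$ are disjoint and both are nontrivial, the sets $\Supp(\pi)$ and $\Supp(\pi')$ are nonempty and disjoint (as subsets of $\maxspec(A)$, with the disjointness passing to $\Gamma$-orbits because supports are $\Gamma$-invariant). Using Remark~\ref{rmk:gather.gen.ev}, I would gather both modules into a common family of maximal ideals: there are positive integers $\ell, n$, maximal ideals $\sfm_1, \dotsc, \sfm_\ell$ in distinct $\Gamma$-orbits, and irreducible $\ga/\sfm_i^n$-modules $V_i, V'_i$ with $V \cong \bightimes_{i=1}^\ell \Gev{\sfm_i^n} V_i$ and $V' \cong \bightimes_{i=1}^\ell \Gev{\sfm_i^n} V'_i$, where the disjointness of supports forces, for each $i$, at least one of $V_i$, $V'_i$ to be trivial, and — crucially — there exist indices $i \neq i'$ with $V_i$ nontrivial (hence $V'_i$ trivial) and $V'_{i'}$ nontrivial (hence $V_{i'}$ trivial).

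Next I would apply Corollary~\ref{cor:ext=h.disjoint.supp} with $p = 1$: since the supports are disjoint, $V^* \htimes V'$ is irreducible, and $\Ext^1_\Gga(V, V')$ is isomorphic to either $\h^1(\Gga, V^* \htimes V')$ or $\h^1(\Gga, V^* \htimes V')^{\oplus 2}$. In either case it suffices to show $\h^1(\Gga, V^* \htimes V') = 0$. Now $V^* \htimes V'$ is an irreducible $\Gga$-module with support contained in $\Supp(V) \cup \Supp(V')$, so by the classification it is of the form $\bightimes_{i=1}^\ell \Gev{\sfm_i^n} L_i$ where, for each $i$, $L_i$ is a subquotient-type factor built from $V_i^* \otimes V'_i$; concretely one takes $L_i = V_i^* \htimes V'_i$, and at the indices $i$ and $i'$ identified above one of the two tensor factors is trivial, so $L_i \cong V_i^*$ is nontrivial and $L_{i'} \cong V'_{i'}$ is nontrivial. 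I would then check that for such a nontrivial irreducible $\g$-module (or $\ga/\sfm_i^n$-module) $L$, one has $\hom_{\ga/\sfm_i^n}(\C, L) = 0$: a nonzero homomorphism $\C \to L$ would mean $\g \otimes A/\sfm_i^n$ acts trivially on a one-dimensional subspace, and by irreducibility this forces $L$ trivial, a contradiction. Hence $\hom_{\ga/\sfm_i^n}(\C, L_i) = 0$ for at least the two distinct indices $i, i'$.

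With that in hand, Proposition~\ref{prop:h1(ga,M)=0} applies verbatim to $M = V^* \htimes V' = \bigotimes$-type module $\bightimes_{i=1}^\ell \Gev{\sfm_i^n} L_i$ — or rather to the genuine tensor product $\bigotimes_{i=1}^\ell \Gev{\sfm_i^n} L_i$, which differs from $V^* \htimes V'$ only by a multiplicity $2^{\kappa}$, harmless for the vanishing statement — giving $\h^1(\Gga, M) = 0$, and therefore $\Ext^1_\Gga(V, V') = 0$. The one point deserving care, and the main (mild) obstacle, is the passage from the irreducible product $\htimes$ to an honest tensor product when quoting Proposition~\ref{prop:h1(ga,M)=0}: I would handle it by noting $(V^* \htimes V')^{\oplus 2^{\kappa}} \cong \bigotimes_{i=1}^\ell \Gev{\sfm_i^n} L_i$ for the appropriate $\kappa = \kappa(L_1, \dotsc, L_\ell)$, so that $\h^1(\Gga, V^* \htimes V') = 0$ iff $\h^1(\Gga, \bigotimes_i \Gev{\sfm_i^n} L_i) = 0$, and the latter is exactly the conclusion of Proposition~\ref{prop:h1(ga,M)=0}. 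Everything else is a direct citation, which is why the authors remark that the proof can be omitted.
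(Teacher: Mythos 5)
Your proposal is correct and follows exactly the route the paper indicates for its (omitted) proof: reduce via Corollary~\ref{cor:ext=h.disjoint.supp} with $p=1$, use the classification to identify $V^*\htimes V'$ as a product of (generalized) evaluation modules with nontrivial factors at two distinct indices, and invoke Proposition~\ref{prop:h1(ga,M)=0}. The details you supply — disjointness of supports forcing a trivial factor at each index, nontriviality of $V$ and $V'$ producing two distinct indices with $\hom(\C, L_i)=0$, and the harmless multiplicity $2^{\kappa}$ when passing between $\htimes$ and $\otimes$ — are precisely the bookkeeping the authors leave to the reader.
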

\begin{proof}
This proof follows directly from Corollary~\ref{cor:ext=h.disjoint.supp}, the classification of finite-dimensional irreducible $\Gga$-modules given in Section~\ref{sec:periplectic} and Proposition~\ref{prop:h1(ga,M)}\eqref{prop:h1(ga,M)a}.
\end{proof}

Recall that we are assuming that $\lie g$ is a finite-dimensional simple Lie superalgebra, $A$ is an associative, commutative, finitely-generated algebra with unit, and $\Gamma$ is a finite abelian group acting on $\g$ and $A$ by automorphisms, such that the induced action of $\Gamma$ on $\maxspec (A)$ is free.  Now we state, and prove, the main result of this section.  It describes 1-extensions between finite-dimensional irreducible $\Gga$-modules in terms of homomorphisms and extensions between finite-dimensional $\ga/\sfm^n$-modules.

\begin{theorem} \label{thm:main}
Let $\pi, \pi' \in \cal P$, $\ell, n_1, \dotsc, n_\ell \in \bb Z_{>0}$, $\sfm_1, \dotsc, \sfm_\ell \subseteq A$ be maximal ideals in distinct $\Gamma$-orbits, and $V_i, V'_i$ be finite-dimensional irreducible $\lie g \otimes A/\sfm_i^{n_i}$-modules such that $\cal V (\pi) = \bightimes_{i=1}^\ell \Gev{\sfm_i^{n_i}} V_i$ and $\cal V (\pi') = \bightimes_{i=1}^\ell \Gev{\sfm_i^{n_i}} V_i'$.  For each $i \in \{1, \dotsc, \ell\}$, let $d_i$ denote $\delta_{1, n_i} \dim_{A/\sfm_i} \sfm_i/\sfm_i^2$.

\begin{enumerate}[$(a)$]
\item \label{thm:main.a}
If $V_i$ is not isomorphic to $V_i'$ for two or more indices $i$, then $\Ext_{\Gga}^1(\cal V (\pi), \cal V (\pi')) = 0$.

\item \label{thm:main.b}
If $V_i$ is isomorphic to $V_i'$ for all but one index $i$, then
\[
\Ext_{\Gga}^1 (\cal V (\pi), \cal V (\pi'))^{\oplus 2^{\kappa(\pi) + \kappa(\pi')}}
\cong \Ext^1_{\lie g \otimes A/\sfm_i^{n_i}} (V_i, V'_i)
\oplus \hom_\g \left( \g \otimes V_i, V_i' \right)^{\oplus d_i}.
\]

\item \label{thm:main.c}
If $V_i$ is isomorphic to $V_i'$ for all $i \in \{1, \dotsc, \ell\}$, then
\[
\Ext_{\Gga}^1 (\cal V (\pi), \cal V (\pi'))^{\oplus 2^{\kappa(\pi) + \kappa(\pi')}}
\cong \bigoplus_{i=1}^\ell  \left( \Ext^1_{\lie g \otimes A/\sfm_i^{n_i}} (V_i, V'_i) \oplus \hom_\g \left( \g \otimes V_i, V_i' \right)^{\oplus d_i} \right).
\]
\end{enumerate}
\end{theorem}

\begin{proof}
Denote $\left( \bigotimes_{i=1}^\ell \Gev{\sfm_i^{n_i}} V_i \right)$ by $V$, $\left( \bigotimes_{i=1}^\ell \Gev{\sfm_i^{n_i}} V'_i \right)$ by $V'$, and recall from Section \ref{ss:map.superalgebras} that $V \cong \cal V (\pi)^{\oplus 2^{\kappa(\pi)}}$ and $V' \cong \cal V(\pi')^{\oplus 2^{\kappa(\pi')}}$.  Thus, by Lemma~\ref{lem:isos}, we have
\[
\Ext^1_\Gga (\cal V (\pi), \cal V (\pi'))^{\oplus 2^{\kappa(\pi) + \kappa(\pi')}}
\cong \h^1 \left( \Gga, V^* \otimes V' \right).
\]
Now, notice that $V^* \otimes V' \cong \bigotimes_{i=1}^\ell \Gev{\sfm_i^{n_i}} (V_i^* \otimes V'_i)$, that $V_i^* \otimes V_i'$ are finite dimensional, and that 
\[
\hom_{\ga / \sfm_i^{n_i}} (\C, V_i^* \otimes V'_i) \cong
\hom_{\ga / \sfm_i^{n_i}} (V_i, V'_i) \cong
\begin{cases}
0, & \textup{if $V_i \not\cong V'_i$,} \\
\C, & \textup{if $V_i \cong V'_i$.}
\end{cases}
\]
\begin{enumerate}[(a)]\itemsep1ex
\item To prove part~\eqref{thm:main.a}, notice that, if $V_i \not\cong V_i'$ for two or more indices $i$, then by Proposition~\ref{prop:h1(ga,M)}\eqref{prop:h1(ga,M)a},
\[
\Ext^1_\Gga (\cal V (\pi), \cal V (\pi')) = 0.
\]

\item If $V_i \not\cong V_i'$ for exactly one index $i$, then by Proposition~\ref{prop:h1(ga,M)}\eqref{prop:h1(ga,M)b}, we have
\[
\Ext^1_\Gga (\cal V (\pi), \cal V (\pi'))^{\oplus 2^{\kappa(\pi) + \kappa(\pi')}}
\cong \h^1 \left( \Gga, \Gev{\sfm_i^{n_i}} (V_i^* \otimes V_i') \right).
\]
Now, let $I = \prod_{\gamma \in \Gamma} (\gamma \sfm_i)^{n_i}$.  By Lemma~\ref{lem:h1(ga,M)}, we have
\[
\Ext^1_\Gga (\cal V (\pi), \cal V (\pi'))^{\oplus 2^{\kappa(\pi) + \kappa(\pi')}}
\cong \h^1 \left( \ga/\sfm_i^{n_i}, (V_i^* \otimes V_i') \right) \oplus K_i,
\]
where $K_i$ is the kernel of the transgression map
\[
t_{_{V_i^* \otimes V_i}} \colon
\hom_{(\lie g \otimes A/I)^\Gamma} \left( (\lie g \otimes I / I^2)^\Gamma, \Gev{\sfm_i^{n_i}} (V_i^* \otimes V_i') \right)
\to \h^2 \left( (\ga/I)^\Gamma, \Gev{\sfm_i^{n_i}} (V_i^* \otimes V_i')  \right).
\]
To finish the proof of part~\eqref{thm:main.b}, notice that, by Proposition~\ref{prop:ker.transgression}, $K_i \cong \hom_\g \left( \g \otimes V_i, V_i' \right)^{\oplus d_i}$.

\item If $V_i \cong V_i'$ for all $i \in \{1, \dots, \ell\}$, then by Proposition~\ref{prop:h1(ga,M)}\eqref{prop:h1(ga,M)c}, we have
\[
\Ext^1_\Gga (\cal V (\pi), \cal V (\pi'))^{\oplus 2^{\kappa(\pi) + \kappa(\pi')}}
\cong \bigoplus_{i=1}^\ell \h^1 \left( \Gga, \Gev{\sfm_i^{n_i}} (V_i^* \otimes V_i') \right).
\]
The rest of the proof of part~\eqref{thm:main.c} follows from Lemma~\ref{lem:h1(ga,M)} and Proposition~\ref{prop:ker.transgression} using, for each $i \in \{1, \dotsc, \ell\}$, the same arguments that we used to prove part~\eqref{thm:main.b}.
\qedhere
\end{enumerate}
\end{proof}

Theorem~\ref{thm:main} generalizes \cite[Theorem~3.7]{NS13} to the super setting.  The particular case where the irreducible module is an evaluation one is treated in the following example.

\begin{example}[Evaluation modules] \label{eg:blocks.ev}
Let $\cal V(\pi) = \bightimes_{i=1}^\ell \Gev{\sfm_i} V (\lambda_i)$ and $\cal V(\pi') = \bightimes_{i=1}^\ell \Gev{\sfm_i} V(\mu_i)$ be irreducible finite-dimensional $\Gga$-modules.  In this case, $d_i = \dim_{A/\sfm_i} \sfm_i / \sfm_i^2$ and Theorem~\ref{thm:main} yields:
\begin{enumerate}[$(i)$]
\item $\Ext_{\Gga}^1(\cal V (\pi), \cal V (\pi')) = 0$, when $\lambda_i \ne \mu_i$ for two or more indices $i$.

\item $\Ext_{\Gga}^1 (\cal V (\pi), \cal V (\pi'))^{\oplus 2^{\kappa(\pi) + \kappa(\pi')}} \cong \Ext^1_\g (V(\lambda_i), V(\mu_i)) \oplus \hom_\g \left( \g \otimes V(\lambda_i), V(\mu_i) \right)^{\oplus d_i}$, when $\lambda_i = \mu_i$ for all but one index $i$.

\item $\Ext_{\Gga}^1 (\cal V (\pi), \cal V (\pi'))^{\oplus 2^{\kappa(\pi) + \kappa(\pi')}} \cong \bigoplus_{i=1}^\ell \Ext^1_\g (V(\lambda_i), V(\mu_i)) \oplus \hom_\g \left( \g \otimes V(\lambda_i), V(\mu_i) \right)^{\oplus d_i}$, when $\lambda_i = \mu_i$ for all $i \in \{1, \dotsc, \ell\}$.
\end{enumerate}
Thus, $\cal V(\pi)$ and $\cal V(\pi')$ are in the same block if and only if, for each $i \in \{1, \dots, \ell\}$, either: $\lambda_i - \mu_i \in Q$, or $V(\lambda_i)$ and $V(\mu_i)$ are in the same block in the category of finite-dimensional $\g$-modules.  In particular, if $\g$ is of type II, $\lie p(n)$, $\lie q(n)$, $S(n)$, $\tilde S(n)$ or $H(n)$, since all finite-dimensional irreducible $\Gga$-modules are evaluation modules, this gives a description of the block decomposition of the category of finite-dimensional $\Gga$-modules.

Moreover, if either $\lambda$ or $\mu$ are \emph{typical}, then $\Ext^1_\g (V(\lambda), V(\mu)) = 0$ (see \cite[Theorem 1]{kac78}).  Thus, if either $\lambda_i$ or $\mu_i$ is \emph{typical} for each $i \in \{ 1, \dotsc, \ell \}$, then $\Ext_{\Gga}^1 (\cal V (\pi), \cal V (\pi'))$ depends only on $\hom_\g \left( \g \otimes V(\lambda_i), V(\mu_i) \right)$.  In particular, if $\g$ is of type $B(0,n)$, since all the weights are typical, the blocks of the category of finite-dimensional $\Gga$-modules are parametrized by the so-called spectral character (compare with \cite[Proposition 4.5]{kodera10} and \cite[Theorem 5.19]{NS15}).

Now, let $\g$ be a Lie superalgebra of type $A$ or $C$ (type I), $\C$ be the trivial $\g$-module and $V(\lambda)$ be an irreducible finite-dimensional $\g$-module of highest weight $\lambda\in \lie h^*$.  Let $\rho_1$ be the half sum of the odd positive roots of $\g$, $\alpha_{\max}$ (resp. $\alpha_{\min}$) be the maximal (resp. minimal) root of $\g$, and $\mu^{(j)}$ be as in \cite[Theorem~1.1]{SZ07}.  For any maximal ideal $\sfm\in\maxspec A$, we have
	\[
\Ext_{\Gga}^1 (\Gev{\sfm} \C, \Gev{\sfm} V(\lambda)) \cong \Ext^1_{\lie g} (\C, V(\lambda)) \oplus \hom_\g \left( \g , V(\lambda) \right)^{\oplus d},
	\]
where $d=\dim_{A/\sfm} \sfm/\sfm^2$.  Thus $\Ext_{\Gga}^1 (\Gev{\sfm} \C, \Gev{\sfm} V(\lambda))\neq 0$ if and only if $\Ext^1_{\lie g} (\C, V(\lambda)) \ne 0$ or $\hom_\g \left( \g , V(\lambda) \right) \ne 0$.  From \cite[Theorem~1.1]{SZ07}, we obtain that:
\begin{enumerate}[$\bullet$]
\item in type $A$, $\Ext_{\Gga}^1 (\Gev{\sfm} \C, \Gev{\sfm} V(\lambda))\neq 0$ if and only if $\lambda \in \{-\alpha_{\min}, \alpha_{\max}, \mu^{(0)}, \dotsc, \mu^{(n-1)} \}$,
\item in type $C$, $\Ext_{\Gga}^1 (\Gev{\sfm} \C, \Gev{\sfm} V(\lambda))\neq 0$ if and only if $\lambda \in \{-\alpha_{\min},\ 2\rho_1,\ \alpha_{\max}\}$.
\end{enumerate}
\end{example}

\begin{remark}
In the published version of this paper, Example~5.9 was incorrect (see \cite[Remark~3.6]{CM}).
\end{remark}


\begin{thebibliography}{ABC99}

\bibitem[Bag12]{Bag12}
I.~Bagci, 
\href{http://dx.doi.org/10.1016/j.jalgebra.2011.11.023} 
{{\em On cohomology and support varieties for {L}ie superalgebras}}, 
J. Algebra \textbf{351} (2012), 204--219.

\bibitem[Bag15]{Bag15}
\bysame, 
\href{http://arxiv.org/abs/1508.00877}
{{\em {On representations of Cartan map Lie superalgebras}}}, 
ArXiv e-prints (2015), \href {http://arxiv.org/abs/1508.00877} {\path{arXiv:1508.00877}}.

\bibitem[BLL15]{BLL15}
D.~Britten, M.~Lau, F.~Lemire, 
\href{https://doi.org/10.1016/j.jalgebra.2015.05.025}
{{\em Weight modules for current algebras}}, 
J. Algebra \textbf{440} (2015), 245--263.

\bibitem[BKN10]{BKN10}
B.~Boe, J.~Kujawa, and D.~Nakano, 
\href{http://dx.doi.org/10.1090/S0002-9947-2010-05096-2}
{{\em Cohomology and support varieties for {L}ie superalgebras}}, 
Trans. Amer. Math. Soc. \textbf{362} (2010), no.~12, 6551--6590.

\bibitem[Che95]{Che95}
S.-J.~Cheng, 
\href{http://dx.doi.org/10.1006/jabr.1995.1076}
{{\em Differentiably simple {L}ie superalgebras and representations of semisimple {L}ie superalgebras}}, 
J. Algebra \textbf{173} (1995), no.~1, 1--43.

\bibitem[CM]{CM}
L.~Calixto and T.~Macedo, 
\href{http://arxiv.org/abs/2105.06994}
{{\em {Finite-dimensional representations of map superalgebras}}}, 
ArXiv e-prints (2021), \href {http://arxiv.org/abs/2105.06994} {\path{arXiv:2105.06994}}.

\bibitem[CMS15]{CMS15}
L.~Calixto, A.~Moura, and A.~Savage, 
\href{http://dx.doi.org/10.4153/CJM-2015-033-6}
{{\em Equivariant map queer {L}ie superalgebras}}, 
Canad. J. Math. \textbf{68} (2016), 258--279.

\bibitem[ER13]{rao13}
S.~Eswara~Rao, 
\href{http://dx.doi.org/10.1090/S0002-9939-2013-11619-9} 
{{\em Finite dimensional modules for multiloop superalgebras of types {$A(m,n)$} and {$C(m)$}}}, 
Proc. Amer. Math. Soc. \textbf{141} (2013), no.~10, 3411--3419.

\bibitem[FK02]{FK02}
D.~Fattori and V.~Kac, 
\href{http://dx.doi.org/10.1016/S0021-8693(02)00504-5} 
{{\em Classification of finite simple {L}ie conformal superalgebras}}, 
J. Algebra \textbf{258} (2002), no.~1, 23--59, Special issue in celebration of Claudio Procesi's 60th birthday.

\bibitem[FL85]{FL85}
B.~Fe{\u \i }gin and D.~Le{\u \i }tes, 
{\em New {L}ie superalgebras of string theories}, 
Group theoretical methods in physics, {V}ol.\ 1--3 ({Z}venigorod, 1982), Harwood Academic Publ., Chur, 1985, pp.~623--629.

\bibitem[FL84]{FL84}
D.~Fuch and D.~Le{\u \i }tes, 
{\em Cohomology of {L}ie superalgebras}, 
C. R. Acad. Bulgare Sci. \textbf{37} (1984), no.~12, 1595--1596.

\bibitem[Fuk86]{fuks86}
D.~Fuks, 
\href{https://doi.org/10.1007/978-1-4684-8765-7}
{{\em Cohomology of infinite-dimensional {L}ie algebras}}, 
Contemporary Soviet Mathematics, Consultants Bureau, New York, 1986, Translated from the Russian by A. B. Sosinski{\u \i }.

\bibitem[Gav14]{gav14}
F.~Gavarini, 
\href{http://dx.doi.org/10.1515/forum-2011-0144} 
{{\em Algebraic supergroups of {C}artan type}}, 
Forum Math. \textbf{26} (2014), no.~5, 1473--1564.

\bibitem[GLS01]{GLS01}
P.~Grozman, D.~Leites, and I.~Shchepochkina, 
\href{http://journals.math.ac.vn/acta/pdf/0101027.pdf}
{{\em Lie superalgebras of string theories}}, 
Acta Math. Vietnam. \textbf{26} (2001), no.~1, 27--63.

\bibitem[Gru00]{Gru00}
C.~Gruson, 
\href{http://dx.doi.org/10.1007/BF01237179} 
{{\em Sur la cohomologie des super alg\`ebres de {L}ie \'etranges}}, 
Transform. Groups \textbf{5} (2000), no.~1, 73--84.

\bibitem[Gru03]{Gru03}
\bysame, 
\href{http://dx.doi.org/10.1016/S0021-8693(02)00573-2} 
{{\em Cohomologie des modules de dimension finie sur la super alg\`ebre de {L}ie {$\mathfrak {osp}(3,2)$}}}, 
J. Algebra \textbf{259} (2003), no.~2, 581--598.

\bibitem[HS53]{HS53}
G.~Hochschild, J.-P.~Serre, 
\href{https://doi.org/10.2307/1969740}
{{\em Cohomology of Lie algebras}}, 
Ann. of Math. \textbf{57} (1953), no.~2, 591--603.

\bibitem[Kac77]{kac77}
V.~Kac, 
\href{https://doi.org/10.1016/0001-8708(77)90017-2}
{{\em Lie superalgebras}}, 
Advances in Math. \textbf{26} (1977), no.~1, 8--96.

\bibitem[Kac78]{kac78}
\bysame, 
\href{https://doi.org/10.1007/BFb0063691}
{{\em Representations of classical {L}ie superalgebras}}, 
Differential geometrical methods in mathematical physics, {II} ({P}roc. {C}onf., {U}niv. {B}onn, {B}onn, 1977), Lecture Notes in Math., vol. 676, Springer, Berlin, 1978, pp.~597--626.

\bibitem[KW94]{KW94}
V.~Kac and M.~Wakimoto, 
\href{https://doi.org/10.1007/978-1-4612-0261-5_15}
{{\em Integrable highest weight modules over affine superalgebras and number theory}}, 
Lie theory and geometry, Progr. Math., vol. 123, Birkh\"auser Boston, Boston, MA, 1994, pp.~415--456.

\bibitem[Kod10]{kodera10}
R.~Kodera, 
\href{http://dx.doi.org/10.1007/s00031-010-9088-3} 
{{\em Extensions between finite-dimensional simple modules over a generalized current {L}ie algebra}}, 
Transform. Groups \textbf{15} (2010), no.~2, 371--388.

\bibitem[Kum02]{kumar02}
S.~Kumar, 
\href{http://dx.doi.org/10.1007/978-1-4612-0105-2} 
{{\em Kac-{M}oody groups, their flag varieties and representation theory}}, 
Progress in Mathematics, vol. 204, Birkh\"auser Boston, Inc., Boston, MA, 2002.

\bibitem[Mus12]{Mus12}
I.~Musson, 
\href{http://dx.doi.org/10.1090/gsm/131}
{{\em {L}ie superalgebras and enveloping algebras}}, 
Graduate Studies in Mathematics, vol. 131, American Mathematical Society, Providence, RI, 2012.

\bibitem[NS13]{NS13}
E.~Neher and A.~Savage, 
\href{http://dx.doi.org/10.1090/conm/602/12024} 
{{\em A survey of equivariant map algebras with open problems}}, 
Recent developments in algebraic and combinatorial aspects of representation theory, Contemp. Math., vol. 602, Amer. Math. Soc., Providence, RI, 2013, pp.~165--182.

\bibitem[NS15]{NS15}
\bysame, 
\href{http://dx.doi.org/10.1007/s00031-015-9300-6} 
{{\em Extensions and block decompositions for finite-dimensional representations of equivariant map algebras}}, 
Transform. Groups \textbf{20} (2015), no.~1, 183--228.

\bibitem[NSS12]{NSS12}
E.~Neher, A.~Savage, and P.~Senesi, 
\href{http://dx.doi.org/10.1090/S0002-9947-2011-05420-6} 
{{\em Irreducible finite-dimensional representations of equivariant map algebras}}, 
Trans. Amer. Math. Soc. \textbf{364} (2012), no.~5, 2619--2646.

\bibitem[Pol88]{Pol88}
E.~Poletaeva, 
{\em Spenser cohomology connected with some {L}ie superalgebras}, 
Problems in group theory and homological algebra ({R}ussian), Matematika, Yaroslav. Gos. Univ., Yaroslavl', 1988, pp.~162--167.

\bibitem[Sav14]{savage14}
A.~Savage, 
\href{http://dx.doi.org/10.1007/s00209-013-1261-7} 
{{\em Equivariant map superalgebras}}, 
Math. Z. \textbf{277} (2014), no.~1-2, 373--399.

\bibitem[SZ98]{SZ98}
M.~Scheunert and R.~Zhang, 
\href{http://dx.doi.org/10.1063/1.532508} 
{{\em Cohomology of {L}ie superalgebras and their generalizations}}, 
J. Math. Phys. \textbf{39} (1998), no.~9, 5024--5061.

\bibitem[SZ99]{SZ99}
\bysame, 
\href{http://dx.doi.org/10.1023/A:1007588610353} 
{{\em The second cohomology of {${\rm sl}(m\delimiter "026A30C 1)$} with coefficients in its enveloping algebra is trivial}}, 
Lett. Math. Phys.\textbf{47} (1999), no.~1, 33--48.

\bibitem[Ser85]{Ser85}
V.~Serganova, 
\href{https://doi.org/10.1007/BF01076630}
{{\em Automorphisms of {L}ie superalgebras of string theories}}, 
Funktsional. Anal. i Prilozhen. \textbf{19} (1985), no.~3, 75--76.

\bibitem[SZ07]{SZ07}
Y.~Su and R.~Zhang, 
\href{https://doi.org/10.1112/plms/pdl005}
{{\em Cohomology of {L}ie superalgebras {$\mathfrak{sl}_{m|n}$} and {$\mathfrak{osp}_{2|2n}$}}}, 
Proc. Lond. Math. \textbf{94} (2007), no.~1, 91--136.

\bibitem[Var04]{Var04}
V.~Varadarajan, 
\href{http://dx.doi.org/10.1090/cln/011}
{{\em Supersymmetry for mathematicians: an introduction}}, 
Courant Lecture Notes in Mathematics, vol.~11, New York University, Courant Institute of Mathematical Sciences, New York; American Mathematical Society, Providence, RI, 2004.

\bibitem[Wei94]{weibel94}
C.~Weibel, 
\href{http://dx.doi.org/10.1017/CBO9781139644136} 
{{\em An introduction to homological algebra}}, 
Cambridge Studies in Advanced Mathematics, vol.~38, Cambridge University Press, Cambridge, 1994.

\end{thebibliography}
\end{document}